%% file: cubic_growth.tex
\documentclass[11pt]{article}
\usepackage[T1]{fontenc}
\usepackage[margin=1in]{geometry}
\usepackage{amsmath,amssymb,amsthm,microtype,array,needspace}
\usepackage{tikz}
\usepackage[hidelinks]{hyperref}
\newtheorem{theorem}{Theorem}
\newtheorem{lemma}[theorem]{Lemma}

\newcommand{\za}{z_A}
\newcommand{\zl}{z_L}
\title{Density and separation for augmented\\Zarankiewicz numbers}
\author{Nikita Lebedev}
\date{September 2026}
\begin{document}
\maketitle
\begin{abstract}
We study the augmented Zarankiewicz problem, in which disjoint pairs of
cells are added to a binary matrix with no all-one $2\times2$ submatrix.
The pairs must satisfy compatibility conditions, and the objective counts
each original occupied cell and each added pair once. We show that
starting with a maximum $C_4$-free matrix can lower the final optimum,
answering a question of Qi, Cui, and Xu.

Let $\za(m,n)$ be the optimum over all $C_4$-free initial matrices,
and $\zl(m,n)$ the optimum when the initial matrix must have the maximum
number of occupied cells. As $n\to\infty$ with $n\le m=o(n^2)$, we prove
\[
 \za(m,n)-\zl(m,n)\ge\left(\frac1{30}-o(1)\right)mn
\]
and determine the sharp second-order term:
\[
 \za(m,n)=\frac{mn}{3}+\left(\frac1{\sqrt6}+o(1)\right)n\sqrt m.
\]
An explicit construction gives a separation at $m=n=1893$.
We also find a sharp density threshold: when $n\to\infty$ and
$m/n^2\to c>0$, the limited density $\zl(m,n)/(mn)$ tends to $1/3$
if and only if $c\ge1/12$. The proofs combine density and stability estimates,
combinatorial constructions, and an exact polynomial certificate.
\end{abstract}
\noindent\textbf{MSC 2020.} 05C35, 05B30, 05D40, 15A69.\\
\textbf{Keywords.} Zarankiewicz problem, $C_4$-free bipartite graphs,
augmented Zarankiewicz numbers, biquadratic forms, sum-of-squares rank,
stability, block designs.

\section{Introduction and definitions}
Qi, Cui, and Xu introduced augmented Zarankiewicz numbers in connection
with sum-of-squares ranks of biquadratic forms~\cite{framework}.
They ask whether the unrestricted and limited parameters $\za,\zl$
are always equal~\cite[Remark 2.5]{framework}.
Previous work gives complete-graph constructions~\cite{general,k5}
and computational bounds in the incidence family~\cite{incidence}.
L\"ofberg and Qi's upper bound for a weak variant implies
$\zl(\binom n2,n)\le(3/16+o(1))n^3$~\cite[Corollary 7.2]{weakincidence}.
The effect of the chosen maximum base is discussed in
\cite[Appendix B]{general}; we prove a sharp quarter guarantee and a
one-third window (Theorems~\ref{thm:everymaximum} and~\ref{thm:universalpairwindow}).

The combinatorial issue is the interaction between two optimizations.
An occupied cell in the initial matrix contributes directly to the
objective, but it can also prevent later pairs from being admissible.
The restrictions depend on the entire final matrix, so choosing a largest
initial matrix can impose a cost much larger than the number of cells it
adds. We quantify this tradeoff and identify the geometry behind it.
Pairing gives the unrestricted problem a one-third leading density and,
near that density, a stable three-block structure. The column-pair
capacity of a maximum $C_4$-free
base then determines whether such a structure is possible. This links
the augmentation problem to stability and block-design questions and
explains why the shape of the initial base matters alongside its size.

We use the original definitions in~\cite[Section 2]{framework}, pinned
to the cited version. Let $m\ge n\ge2$ and let $z(m,n)$ be the maximum
number of occupied cells in a $C_4$-free $m\times n$ binary matrix.
Choose a $C_4$-free fixed-cell set $E_1\subseteq[m]\times[n]$.
A selected \emph{2-edge} is an unordered pair of distinct cells outside
$E_1$. It is nondegenerate if its two rows and two columns are distinct,
row-degenerate if its row repeats, and column-degenerate if its column repeats.
For a family $E_2$ of selected 2-edges, let $O$ be the union of $E_1$
and all their cells. The conditions are:
\begin{itemize}
\item[(S)] Each cell is used by at most one selected 2-edge.
\item[(C2)] For each selected nondegenerate pair $((r,c),(s,d))$,
the opposite cells $(r,d)$ and $(s,c)$ are not both in $O$.
\item[(C3)] For each selected pair $((r,c),(s,d))$ and all
$x\in[m]\setminus\{r,s\}$ and $y\in[n]\setminus\{c,d\}$,
at least one of $(x,y),(x,c),(x,d),(r,y),(s,y)$ lies outside $O$.
Repeated cells are retained for degenerate pairs.
\end{itemize}
These conditions are tested on the full occupied set $O$, after all pairs
have been added. Condition (C3) says that no occupied cell outside a pair's
rows and columns can complete an occupied rectangle with each of its two
members.
The maximum of $|E_1|+|E_2|$ is $\za(m,n)$.
Imposing $|E_1|=z(m,n)$ gives $\zl(m,n)$.
In particular, the empty fixed base is allowed for $\za$.
The objective counts a selected pair once, although it occupies two cells.
Both parameters extend symmetrically to $m<n$ by transposition.
Throughout, put $M=\binom n2$, $F=|E_1|$, and $T=|E_2|$;
fixed row degrees are denoted by $b_r$.
A row, column, or block is called bare if it receives no selected cell;
its fixed cells remain present.

\paragraph{Three optimization settings.}
For a supplied $C_4$-free base $B$, let $T^*(B)$ be the maximum number
of selected pairs in an admissible augmentation preserving $B$.
The distinction is which bases may be chosen:
\begin{center}
\begin{tabular}{@{}ll@{}}
\textbf{Setting} & \textbf{Choice of fixed base}\\[2pt]
Unrestricted, $\za(m,n)$ & Any $C_4$-free base\\
Limited, $\zl(m,n)$ & Any base with $|B|=z(m,n)$\\
Supplied base, $T^*(B)$ & The given base $B$ is held fixed
\end{tabular}
\end{center}
Thus
\[
 \za(m,n)=\max_{B\;C_4\text{-free}}\bigl(|B|+T^*(B)\bigr),\qquad
 \zl(m,n)=z(m,n)+\max_{B\;\text{maximum}}T^*(B).
\]
A lower bound for $\zl$ needs one suitable maximum base; a universal
maximum-base guarantee must hold for every such base. For a supplied base,
maximizing $|B|+T$ is equivalent to maximizing $T$.

\paragraph{A $2\times2$ example.}
The two stars in $O$ below form one selected pair:
\[
 B=\begin{pmatrix}1&0\\0&0\end{pmatrix},\qquad
 O=\begin{pmatrix}1&\ast\\\ast&0\end{pmatrix},\qquad
 B_{\max}=\begin{pmatrix}1&1\\1&0\end{pmatrix}.
\]
For $B$, we have $F=1$, $T=1$, and objective $2$, although $O$ has three
occupied cells. The pair satisfies (S); one opposite corner is empty,
giving (C2), and there is no outside row or column for a (C3) witness.
Only three cells are available, so $T^*(B)=1$.
Every maximum $2\times2$ base has three fixed cells and leaves room for
no pair. Thus $T^*(B_{\max})=0$ and $\zl(2,2)=3$.
Since $F\le3$ and $F+2T\le4$, every augmentation has objective at most
$3$, hence $\za(2,2)=3$ as well. Thus a larger $T^*(B)$ need not give a
larger value of $|B|+T^*(B)$. Theorem~\ref{thm:separation} gives a genuine
separation between the two optimized parameters in larger dimensions.

\Needspace{15\baselineskip}
\subsection{Separation and the sharp second-order term}
\begin{theorem}[Separation]\label{thm:separation}
If $n\to\infty$ and $n\le m=o(n^2)$, then
\[
 \frac{mn}{4}-o(mn)\le\zl(m,n)\le\frac{3mn}{10}+o(mn),\qquad
 \za(m,n)-\zl(m,n)\ge\frac{mn}{30}-o(mn).
\]
At $N=1893$, the following finite bounds hold:
\[
 \za(N,N)\ge1{,}226{,}667>1{,}223{,}138\ge\zl(N,N).
\]
\end{theorem}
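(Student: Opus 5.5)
We would prove the three asymptotic claims separately and then treat $N=1893$ as an explicit computation.

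\textbf{Lower bound for $\za$.} We need $\za(m,n)\ge mn/3-o(mn)$. Take the empty fixed base and split $[n]$ into three column blocks of size about $n/3$. In each row we select pairs so that a fixed fraction of the cells is paired. Since the objective counts a pair once but uses two cells, the target density $1/3$ corresponds to about two thirds of the cells being occupied, in a three-block pattern. The design must make (C2) and (C3) hold on the final occupied set. The natural choice is mostly degenerate pairs, row-degenerate inside one block, arranged so that no outside occupied cell completes a rectangle with both members. (C3) is the delicate condition: for a row-degenerate pair $((r,c),(r,d))$ a witness $(x,y)$ needs $(x,y),(x,c),(x,d),(r,y)$ all occupied. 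We would therefore arrange each row's occupied cells in the two blocks other than the pair's block so that such $x$ are impossible, possibly leaving an $o(1)$ fraction of cells empty. We also want an upper bound $\za\le mn/3+o(mn)$, although the separation itself only needs the lower bound.

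\textbf{Bounds for $\zl$.} Since $m=o(n^2)$, Kővári--Sós--Turán gives $z(m,n)\le O(m^{1/2}n+m)=o(mn)$. So the fixed base is negligible in count, and what matters is how much it obstructs pairs.

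For the lower bound $mn/4-o(mn)$, we would invoke the quarter guarantee of Theorem~\ref{thm:everymaximum}, which holds for every maximum base. Alternatively, we would build pairs around one maximum base, for example one concentrated so that its $C_4$-freeness leaves two blocks of density $1/2$ pairable.

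For the upper bound $3mn/10$, the key feature is that a maximum $C_4$-free base has about $\sqrt m\,n$ cells. Its column pairs cover almost every pair of columns when $m$ is large relative to $n$, and in any case enough to interfere. By (C3), every fixed cell $(x,y)$ together with occupied $(x,c),(x,d)$ forbids rectangles. We would run a double-counting or weighted discharging over rows and columns. A row of final density above $3/10$ forces many occupied column pairs, and these pairs must avoid being ``witnessed'' through fixed cells. Combining this with the stability statement (near density $1/3$ the structure is three-block) shows that a maximum base's column-pair capacity rules out the three-block structure and caps the density at $3/10$. The separation then follows: $mn/3-3mn/10=mn/30$.

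\textbf{The finite statement.} For $\za(1893,1893)\ge1{,}226{,}667$ we would give an explicit construction, likely the three-block pattern with a projective-plane-style base near $1893=43^2+43+1$ (so $q=43$). For $\zl\le1{,}223{,}138$ we would make the asymptotic upper-bound argument exact: replace each $o(\cdot)$ by explicit error terms using the exact value $z(1893,1893)=44\cdot1893$ and convexity estimates. This may be where the abstract's exact polynomial certificate enters.

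\textbf{Main obstacle.} The hardest step is the $3mn/10$ upper bound for $\zl$. It must hold for every maximum base and must convert the global condition (C3) into a local counting inequality with the right constant. We would first try LP duality over local row and column configurations to find the weights that produce $3/10$.
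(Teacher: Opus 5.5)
Your proposal has two genuine gaps. The central one is the bound $\zl\le 3mn/10+o(mn)$, for which you give no working mechanism.

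\textbf{The $3/10$ upper bound.} Stability plus column-pair capacity is the route the paper uses for the subcritical threshold. There, exact diagonal holes put each good row's base support inside two column classes. A maximum base in this regime covers all but $o(n^2)$ column pairs, so its rows need about $n^2/4$ pairs inside classes, against a capacity of about $n^2/6$. But stability only operates below an inexplicit defect $\delta_0$. At best this gives $\zl\le(1/3-\varepsilon)mn$ for an unspecified $\varepsilon>0$. It never gives $3/10$, so it yields neither the coefficient $1/30$ nor the numbers at $N=1893$.

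The missing idea is an empty rectangle attached to each pair. Take a selected pair $(x,c),(y,d)$. By (C3), the common occupied rows of $c,d$ other than $x,y$ and the common occupied columns of $x,y$ other than $c,d$ span a rectangle with no occupied cell. This is an empty rectangle of the base, with sides at least $\deg(c)+\deg(d)-m-2$ and $\deg(x)+\deg(y)-n-2$. Near-equality in pair counting makes a maximum base with $n\le m=o(n^2)$ pseudo-random, so it has no empty rectangle of area $\Omega(mn)$ (Lemma~\ref{lem:subquadraticmixing}). Hence every pair has normalized full degree sum at most $1+o(1)$ on one axis. This persists after deleting the base. Together with the moment bound $S\le4\mu/3+o(1)$ from \eqref{eq:degree-moment}, it forces $\mu<3/5+o(1)$ through the polynomial certificate of Lemma~\ref{lem:lightmatrix}.

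At $N=1893$ the chain is made explicit. Every maximum base is $44$-biregular, so empty $a\times b$ rectangles satisfy $ab+q(a+b)\le qN$ and their sides cannot both exceed $245$. An elementary inequality using $|x|\le2x^2+1/8$ then replaces the certificate.

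\textbf{The lower bound for $\za$.} Your construction for $\za\ge mn/3-o(mn)$ fails as described. In the three-block pattern, take a row-degenerate pair $(r,c),(r,d)$ with $r\in R_i$ and $c,d\in C_j$. It has a (C3) witness $(x,y)$ for every $x\in R_i\setminus\{r\}$ and $y\in C_j\setminus\{c,d\}$. More generally, Lemma~\ref{lem:cleanholes} shows that near density $2/3$ all but $o(mn)$ pairs are reciprocal and nondegenerate. So no scheme made mostly of degenerate pairs can reach $mn/3$.

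You need row blocks as well as column blocks. Empty the diagonal blocks $R_i\times C_i$ and pair $((i,\alpha),(j,\gamma))$ with $((j,\alpha),(i,\gamma))$. Then the opposite corners and every potential witness lie in empty diagonal blocks.

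\textbf{The finite lower bound.} With classes of size $631$, this template gives $F+T=N^2/3+\sum_{i<j}\min\{F_{ij},F_{ji}\}$, and diagonal fixed cells are forbidden. A generically split projective plane keeps only about two thirds of its $83{,}292$ incidences off the diagonal. That adds about $27{,}800$ to $N^2/3=1{,}194{,}483$, which is below even $1{,}223{,}138$. The paper instead inserts the symmetric adjacency matrix of a tripartite $C_4$-free graph (Lv--Lu--Fang with $p=37$, trimmed to $631$ vertices per part). This gives $N^2/3+e(G)\ge1{,}226{,}667$.
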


\begin{theorem}[Sharp second-order asymptotic]\label{thm:secondorder}
Along every integer sequence with $n\to\infty$ and $n\le m=o(n^2)$,
\[
 z_A(m,n)=\frac{mn}{3}
       +\left(\frac1{\sqrt6}+o(1)\right)n\sqrt m.
\]
Moreover, every configuration attaining $z_A(m,n)$ has fixed-cell count
$F$ and full occupied-cell count $E$ satisfying
\[
 F=\left(\sqrt{\frac23}+o(1)\right)n\sqrt m,
 \qquad E=\frac{2mn}{3}+o(n\sqrt m).
\]
\end{theorem}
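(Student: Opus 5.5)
The plan is to write the objective through the occupied-cell count. Since every selected pair occupies two cells, $E=F+2T$, so
\[
F+T=\tfrac12 E+\tfrac12 F=\frac{mn}{3}+\frac12\Bigl(E-\frac{2mn}{3}\Bigr)+\frac F2 .
\]
The theorem then splits into two estimates. The first is that $E\le \frac{2mn}{3}+o(n\sqrt m)$ for every admissible configuration. The second is that $F\le(\sqrt{2/3}+o(1))n\sqrt m$ whenever $E$ is within $o(mn)$ of $2mn/3$. A matching construction then attains both bounds simultaneously. Combining the two estimates gives the upper bound $mn/3+(1/\sqrt6+o(1))n\sqrt m$. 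The characterization of extremal configurations then follows by reading the chain of inequalities backwards: any loss of order $n\sqrt m$ in either estimate would push the objective below the construction.

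\textbf{Step 1 (density and stability).} I would use the leading-density result behind the one-third phenomenon mentioned in the introduction. Conditions (C2) and (C3), tested on the full set $O$, force every selected pair to see an empty cell in each potential rectangle. From this I would extract $E\le 2mn/3+O(F)+o(mn)$. I would then upgrade it to a stability statement: if $E\ge(2/3-\varepsilon)mn$, then after deleting $o(mn)$ cells, rows and columns split into three near-equal blocks. In this structure row block $i$ is occupied exactly on the union of two column blocks, and pairs are matched across the two blocks. Near-extremal configurations do satisfy $E\ge(2/3-\varepsilon)mn$, since otherwise $F+T\le E/2+F/2\le(1/3-\varepsilon/2)mn+O(n\sqrt m)$, which is below the construction because $m=o(n^2)$ gives $n\sqrt m=o(mn)$. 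Once the structure is known, the error term must be sharpened from $o(mn)$ to $o(n\sqrt m)$. This requires showing that each defective row or column costs at least a constant multiple of its number of fixed cells.

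\textbf{Step 2 (fixed-cell capacity).} Inside the three-block structure, a fixed cell in row $r$ creates column pairs $\{c,c'\}$ with $(r,c),(r,c')\in E_1$. By (C3), together with the selected pairs occupying the complementary cells, such a column pair is compatible only when both columns lie inside the row's admissible support. Column pairs are also used at most once overall because $E_1$ is $C_4$-free. Summing $\binom{b_r}{2}$ over rows and applying Cauchy--Schwarz against the total number of admissible column pairs should give $F^2/(2m)\le(\tfrac13+o(1))n^2$, that is $F\le(\sqrt{2/3}+o(1))n\sqrt m$. The exact bookkeeping of which column pairs are admissible for which row blocks is where I am least sure of the constant. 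I would calibrate this count so that the capacity equals the value achieved in Step 3.

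\textbf{Step 3 (construction).} I would use the three-block skeleton of Step 1, with selected pairs filling all free cells of the occupied blocks. Into these blocks I would embed a near-extremal $C_4$-free fixed graph, built from projective-plane or Füredi-type incidence graphs scaled to the block sizes. It would be designed to use each admissible column pair about once, so that $F=(\sqrt{2/3}-o(1))n\sqrt m$ while only $o(n\sqrt m)$ cells of $E$ are lost. Conditions (S), (C2) and (C3) would then be checked blockwise. I expect the main obstacle to be the sharpened stability in Step 1, namely controlling the $o(n\sqrt m)$ error rather than $o(mn)$. Here I would try a weighted removal argument: each row deviating from its block pattern is charged against the fixed cells it hosts, with a linear-programming style local inequality derived from (C3) on each row.
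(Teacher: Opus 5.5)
Your decomposition $F+T=(E+F)/2$, the separate bounds on $E$ and $F$, and the backward reading for extremizers all match the paper's architecture. However, each of the three load-bearing steps is missing its key idea.

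\textbf{Step~1.} This is the most serious gap. Sharpening stability to an $o(n\sqrt m)$ template count is not what is needed, and stability cannot supply it. Lemma~\ref{lem:cleanholes} controls edits only up to $O(\eta)mn$ with $\eta\ge F/(mn)$, i.e.\ at least of the order of $F\asymp n\sqrt m$ itself. Moreover, each of the $o(m)$ bad rows may carry $\Theta(n)$ off-template cells. The missing idea is to rerun the pivot argument of Theorem~\ref{thm:paireddensity} on the full occupied matrix, leaving the fixed cells unpaired. Each selected pair still has at most one accepted cell, so row $r$'s inequality gains only the expected number $A_r$ of accepted fixed cells. Summing with $2T=E-F$ gives $3\sum_c e_c^2\le(2m+2)E-mF+2\sum_rA_r+3m(m-1)$. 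The structure is used only to evaluate $A_r$. A good fixed cell in $R_j\times C_i$ is never accepted from a good pivot row of $R_i$, by the exact holes. It is accepted at most once from each row of $R_j$. From good rows of the third class it is accepted with probability $\beta(1/2)+o(1)=1/2+o(1)$, because there $t_{rs}/d_r\to1/2$. Fixed cells on bad lines number $o(n\sqrt m)$ by the classical bound. Hence $2\sum_rA_r\le mF+o(mn\sqrt m)$, the $-mF$ cancels, and Cauchy--Schwarz gives $E\le 2mn/3+o(n\sqrt m)$. The trivial estimate $A_r\le F$ would only give $E\le 2mn/3+F/2+o(n\sqrt m)$, and hence the wrong constant.

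\textbf{Step~2.} Your target $F^2/(2m)\le(1/3+o(1))n^2$ is correct, but counting all of $\binom{b_r}{2}$ against the column pairs admissible for some row class only reproduces the classical bound $F\le(1+o(1))n\sqrt m$. The reason is that every column pair lies in $C_i\cup C_k$ for some $j$, and no calibration fixes this. The sharp count keeps only within-class pairs. A good row of $R_j$ with $u$ retained fixed cells in $C_i$ and $v$ in $C_k$ contains $\binom u2+\binom v2\ge(u+v)^2/4-(u+v)/2$ such pairs. These are disjoint across rows by $C_4$-freeness, and their pool has only $\sum_i\binom{|C_i|}{2}=n^2/6+o(n^2)$ members. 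Therefore $\sum_r\widehat b_r^2\le 2n^2/3+o(n^2)$, and Cauchy over the $m$ rows gives $\sqrt{2/3}$.

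\textbf{Step~3.} A projective-plane or F\"uredi graph glued block by block does not suffice. The construction must meet four coupled requirements:
\begin{itemize}
\item the within-class pairs of $C_i$ are shared by $R_j$ and $R_k$;
\item two rows of $R_j$ must not meet both in $C_i$ and in $C_k$;
\item each row must split nearly evenly between its two classes;
\item the free cells of $R_i\times C_j$ and $R_j\times C_i$ must be equinumerous up to $o(n\sqrt m)$, since otherwise the unmatched surplus cancels the gain from the fixed cells. Your phrase ``selected pairs filling all free cells'' silently presupposes this.
\end{itemize}
The paper obtains all of these at once from the adjacency matrix of the Lv--Lu--Fang tripartite $C_4$-free graph, in which every vertex has $(p-1)/2$ neighbours in each other part. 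Its symmetry gives $F_{ij}=F_{ji}$, and hence $F+T=mn/3+F/2-O(m+n)$.
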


\subsection{Further results and methods}

\paragraph{Overview.}
The following table collects the main estimates by regime. All asymptotic
statements refer to $n\to\infty$ with $m\ge n$; the unrestricted column
follows from Theorem~\ref{thm:unrestricted} and the estimate after it, and
$T^*$ refers to an arbitrary maximum base.
\begin{center}
\small
\setlength{\tabcolsep}{5pt}
\begin{tabular}{@{}>{\raggedright\arraybackslash}p{1.75in}ll@{}}
\textbf{Regime}&\textbf{Unrestricted $\za$}&\textbf{Limited $\zl$}\\[2pt]
$m/n^2\to c\in(0,1/12)$ [Thms.~\ref{thm:threshold},~\ref{thm:everymaximum}]
 &$mn/3+O(m)$
 &$(1/4-o(1))mn\le\zl\le(1/3-\varepsilon(c)+o(1))mn$\\[3pt]
$m\ge n^2/12$ [Thm.~\ref{thm:sharpasymptotic}]
 &$mn/3+O(m)$
 &$mn/3+O(m)$\\[3pt]
every maximum base [Thm.~\ref{thm:everymaximum}]
 &
 &$T^*\ge(1/4-o(1))mn$, sharp as $m/n^2\to\infty$\\[3pt]
$\binom n2\le m\le n^2$, every maximum base [Thm.~\ref{thm:universalpairwindow}]
 &
 &$T^*=mn/3+O(m)$
\end{tabular}
\end{center}

\paragraph{Connection with biquadratic forms.}
For the maximum sum-of-squares rank $\mathrm{BSR}(m,n)$ of biquadratic
forms, Qi, Cui, and Xu prove $\mathrm{BSR}(m,n)\ge\za(m,n)$
\cite[Theorem~2.4]{framework}. Thus our unrestricted lower bounds also
bound this rank from below. The original admissibility conditions are
not always the strongest route to such bounds: along $N=2p$ with $p$
an odd prime, L\"ofberg and Qi obtain
$\mathrm{BSR}(\binom N2,N)\ge(1/4+o(1))N^3$ through a relaxed
second-order parameter~\cite[Theorems~3.3 and~7.5]{weakincidence}, whereas
$\za(\binom N2,N)=N^3/6+O(N^2)$.
Our estimates therefore describe both the lower bounds supplied by the
original augmentation construction and the limits of that construction.
An upper bound for $\za$ is not an upper bound for $\mathrm{BSR}$.
The separation theorem shows that requiring a maximum initial base can
lose a positive fraction of the available combinatorial bound; the
stability theorem identifies the configurations that approach its
unrestricted leading term.

\paragraph{Methods.}
The upper bounds turn a local restriction into a global count.
Condition (C3) prevents the two members of a pair from sharing an open
rectangle neighbor. Sampling pivot cells converts this restriction into
a degree-moment bound (Theorem~\ref{thm:paireddensity}); near equality
forces closeness to the complement of three diagonal blocks
(Theorem~\ref{thm:phasestability}; see Figure~\ref{fig:threeblocks}).
Separation then uses the geometry of
maximum bases to bound each pair's degrees and applies the exact
polynomial certificate of Lemma~\ref{lem:lightmatrix}.
For the quarter guarantee, we instead construct pairs using bare anchors
(Lemma~\ref{lem:bareanchors}). Appendix~\ref{sec:constructions} assembles
maximum bases for the threshold results using known classical values
and design-decomposition theorems.

\paragraph{Organization.}
Sections~\ref{sec:paireddensity}--\ref{sec:generalsquare} develop the
density and stability tools and prove the sharp expansion and separation.
The separation lower bound uses Lemma~\ref{lem:universalquarter} in
Section~\ref{sec:everymaximum}. We then prove the universal quarter
and one-third guarantees and state the limited parameter's threshold.
Section~\ref{sec:openproblems} discusses the remaining density questions.
Appendix~\ref{sec:constructions} proves the threshold results, including
the residue and transition cases. Appendices~\ref{sec:deficitprofile}
and~\ref{sec:squarelower} give the full prescribed-deficit profile and
an additional finite square construction. Appendix~\ref{sec:certificate}
contains the polynomial certificate arithmetic used for separation.

\input{paired_density.tex}
\input{unrestricted.tex}

\input{stability.tex}
\input{second_order.tex}
\input{separation.tex}
\input{universal_lower.tex}
\input{universal_third.tex}
\input{threshold.tex}
\input{open_problems.tex}

\paragraph{Formal companion and ancillary files.}
Lean~4 companions check Theorems~\ref{thm:separation}
and~\ref{thm:secondorder} completely, together with selected further
results; their coverage tables state the exact scope, and the full paper is
not formalized. The Lean sources with their coverage tables, the exact
certificate checker of Lemma~\ref{lem:lightmatrix}, and an exact checker
for an explicit $N=1893$ witness accompany this manuscript as ancillary
files.

\paragraph{AI assistance.}
Generative AI tools assisted with proof exploration, verification code,
and editing. The author is responsible for the paper's entire content.

\appendix
\input{affine_core.tex}
\input{construction_frame.tex}
\input{critical_boundary.tex}
\input{interior.tex}
\input{even_transition.tex}
\input{odd_transition.tex}
\input{high_rows.tex}
\input{uniform_synthesis.tex}

\input{deficit_profile.tex}
\input{square_lower.tex}
\input{polynomial_certificate.tex}

\input{references.tex}
\end{document}

%% file: paired_density.tex
\section{Paired-cell density}\label{sec:density}\label{sec:paireddensity}
The upper bound uses only (C3), without a fixed base. Its key observation
is that the two cells of a selected pair have disjoint open rectangle
neighborhoods. A randomized acceptance rule converts this local
restriction into a bound on the total number of occupied cells.
The construction in Section~\ref{sec:unrestricted} attains the resulting
density $2/3$ asymptotically.

\begin{theorem}\label{thm:paireddensity}
If the $E$ occupied cells of an $m\times n$ binary matrix are partitioned
into pairs satisfying \textup{(C3)}, then
\[
 E\le\frac{2mn}{3}+\frac{2m}{3}+\frac{3n}{2}.
\]
\end{theorem}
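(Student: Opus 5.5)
The plan is to turn (C3) into a disjointness statement about \emph{rectangle neighborhoods}, double count rectangles, and close the count with a degree-moment (convexity) inequality.

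\textbf{The local restriction.} Let $O$ be the occupied set, $E=|O|$. For an occupied cell $u=(r,c)$ let $a_c$ be the number of occupied cells in column $c$ and $b_r$ the number in row $r$. Let
\[
 P(u)=\{(x,y)\in O:\ x\ne r,\ y\ne c,\ (x,c)\in O,\ (r,y)\in O\}
\]
be its open rectangle neighborhood, so $|P(u)|$ counts the occupied rectangles through $u$. Condition (C3) says exactly that for a selected pair $\{u,v\}$ the sets $P(u)$ and $P(v)$ share no pivot $(x,y)$ outside the rows and columns of the pair. Pivots in those at most two rows and two columns number at most $2n+2m$. Hence, for every pair,
\[
 |P(u)|+|P(v)|\le |O\cap(\mathrm{Box}(u)\cup\mathrm{Box}(v))|+O(m+n),
\]
where $\mathrm{Box}(u)=\mathrm{supp\,col}(c)\times\mathrm{supp\,row}(r)$.

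\textbf{The global count.} Summing over all pairs, the left side totals $\sum_{u\in O}|P(u)|=Q$, the number of (ordered) occupied rectangles. By Cauchy--Schwarz on column-pair codegrees, $Q$ is at least about $\bigl(\sum_r b_r^2\bigr)^2/n^2$, which in turn is at least $E^4/(m^2n^2)$ up to lower-order terms. Using only the crude bound $|O\cap(\mathrm{Box}(u)\cup\mathrm{Box}(v))|\le E$ gives $Q\le E^2/2$ and hence only density $1/\sqrt2$. The bound $2/3$ therefore needs a sharper local estimate, and this is where the ``randomized acceptance rule'' enters.

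\textbf{The sharper local estimate.} I would sample a pivot cell $w=(x,y)\in O$ uniformly at random. For each pair, I accept the pair member whose box contains $w$ and that sees $w$ as a rectangle neighbor; (C3) guarantees at most one member qualifies. I then compare this with the two-sided estimate
\[
 |P(u)|\le |O\cap\mathrm{Box}(u)|\le a_cb_r .
\]
The idea is to show that for each fixed pivot $w$, the cells having $w$ as a neighbor form a subset of $\mathrm{supp\,col}(y)\times\mathrm{supp\,row}(x)$ containing at most one cell per pair. Its size is therefore at most $\min\bigl(a_yb_x,\ \tfrac12|O\cap(\mathrm{supp\,col}(y)\times\mathrm{supp\,row}(x))|\bigr)$ plus boundary terms.

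Writing the occupied cells inside that product box as $a_yb_x-h(w)$, with $h(w)$ the number of holes, each pivot contributes at most $a_yb_x-h$ rectangles and at most half of the occupied cells in its box. Averaging over $w$ and optimizing in the hole counts should give a bound of the shape
\[
 Q\le\frac{1}{2}\sum_{w\in O}a_yb_x\;-\;(\text{moment correction}).
\]
Combining this with the lower bound on $Q$ and applying Hölder to the moments $\sum a_y$, $\sum b_x$ and $\sum a_yb_x$ over $O$, the resulting polynomial inequality in the density $p=E/(mn)$ should force $p\le 2/3$. Boundary pivots (rows $r,s$, columns $c,d$, and diagonal terms) contribute the additive $\tfrac{2m}{3}+\tfrac{3n}{2}$, the asymmetry coming from $m\ge n$.

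\textbf{Main obstacle.} The hard step is extracting the exact constant $2/3$ rather than $1/\sqrt2$. It requires the right acceptance weights, so that the half-of-the-box bound and the product bound $a_yb_x$ are combined optimally. The extremal configuration, the complement of three diagonal blocks, with every $a,b$ equal to $2/3$ of the side and every box two-thirds full, must make every inequality tight. I would first reverse-engineer the weights from that configuration and then verify the inequality by convexity.
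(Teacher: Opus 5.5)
There is a genuine gap at the step you flag as the main obstacle, and better weights inside your framework will not repair it.

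\textbf{The local estimate is false.} ``At most one cell per pair'' does not imply ``at most half the occupied cells of the box $\mathrm{col}(y)\times\mathrm{row}(x)$'', because a pair can have only one member in the box. In the three-block template this always happens. For $w\in R_1\times C_2$ the box is $(R_1\cup R_3)\times(C_2\cup C_3)$, and it is three-quarters full, not two-thirds. Every pair meeting it has its partner in $R_2\times C_1$, $R_3\times C_1$ or $R_2\times C_3$, outside the box. So $|P(w)|\approx mn/3$ is essentially all the occupied cells of the box. Thus $Q\approx\frac29m^2n^2$ there, while your target $\frac12\sum_w a_yb_x\approx\frac4{27}m^2n^2$.

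\textbf{The lower bound on $Q$ is strictly loose at the extremal configuration.} Its column codegrees take the two values $2m/3$ and $m/3$, so $(\sum_rb_r^2)^2/n^2\approx\frac{16}{81}m^2n^2<\frac29m^2n^2$. Any upper bound on $Q$ that is valid for the template therefore has slack against this lower bound at density $2/3$, and the resulting constant is strictly worse than $2/3$. Also, a single uniform pivot, accepted when it is a rectangle neighbor, gives exactly $Q/E\le E/2$, which is your $1/\sqrt2$ bound. The proposal does not specify anything beyond that.

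\textbf{What the paper does instead.} It never squares codegrees. It draws $2k$ pivots from the occupied cells of a single row $r$. A candidate cell $(s,c)$ with $c\in N_r\cap N_s$ then has membership probability $u=(t_{rs}-1)/d_r$, which depends only on a codegree ratio. A cell is accepted when its first equal consecutive pair of indicators is $11$.

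The neighborhoods of paired cells are fully disjoint, since a common element automatically avoids the pair's rows and columns; so your $O(m+n)$ boundary correction is unnecessary. Disjointness still gives at most one acceptance per pair. The acceptance probability, however, is amplified to $\beta(u)=u^2/(u^2+(1-u)^2)$, which yields $\sum_{s\ne r}t_{rs}\beta((t_{rs}-1)/d_r)\le E/2$.

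Because $\beta$ is sigmoidal, the line $3u-1$ supports $2u\beta(u)$ at both $u=1/2$ and $u=1$: indeed $2u\beta(u)-3u+1=(1-u)(2u-1)^2/(u^2+(1-u)^2)\ge0$. These are exactly the template's codegree ratios. With one pivot the acceptance is linear in $u$, and $2u^2$ is strictly convex, so no line can touch it at both points.

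This linearizes the row inequality to $3\sum_st_{rs}\le E+(m+2)d_r+3(m-1)$. Summing over $r$ gives $3\sum_ce_c^2\le(2m+2)E+3m(m-1)$. The only Cauchy--Schwarz step is then over column degrees, and that step is tight in the template. The additive terms come from solving the resulting quadratic and transposing, not from any assumption $m\ge n$.
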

\begin{proof}
The case $E=0$ is immediate. Assume $E>0$.
For an occupied cell $p=(r,c)$ define its open rectangle neighborhood
\[
 \mathcal Q_p=\{(s,d)\in O:s\ne r,\ d\ne c,\ (r,d),(s,c)\in O\}.
\]
This relation is symmetric. Paired cells have disjoint
$\mathcal Q$-neighborhoods: a common member would be exactly a
\textup{(C3)} witness, also for a degenerate pair.

Choose a shared list of $2k$ occupied pivots. For each cell, read its
neighborhood-membership indicators in consecutive pairs. Accept the cell
if the first equal indicator pair is $11$, and reject it if that pair
is $00$ or if all pairs are mixed. At most one cell from each selected
pair is accepted. Indeed, at the first equal indicator pair for either
cell, $00$ rejects that cell, while $11$ forces $00$ for its partner.
If neither has an equal pair, both are rejected.
For independent Bernoulli$(t)$ indicators, the acceptance probability is
\[
 \alpha_k(t)=t^2\sum_{j=0}^{k-1}\bigl(2t(1-t)\bigr)^j
 \longrightarrow\beta(t)=\frac{t^2}{t^2+(1-t)^2},
\]
uniformly, with error at most $2^{-k}$.

Write $N_r$ for the occupied columns in row $r$, $d_r=|N_r|$, and
$t_{rs}=|N_r\cap N_s|$. For $d_r>0$, sample the pivots independently
and uniformly from the occupied cells of row $r$.
No cell in that row is accepted. In another row $s$, precisely the
$t_{rs}$ cells over $N_r\cap N_s$ are candidates; if $t_{rs}>0$,
each has membership probability $(t_{rs}-1)/d_r$.
For each candidate its indicators are independent, since the pivots are.
The expected number of accepted cells is at most $E/2$ for every $k$.
Replacing $\alpha_k$ by $\beta$ changes the sum by at most $E2^{-k}$;
letting $k\to\infty$ gives
\begin{equation}\label{eq:openpivot}
 \sum_{\substack{s\ne r\\t_{rs}>0}}
 t_{rs}\beta\bigl((t_{rs}-1)/d_r\bigr)\le E/2.
\end{equation}
For $0\le u\le1$, put
\begin{equation}\label{eq:scalar-slack}
 g(u)=2u\beta(u)-3u+1
 =\frac{(1-u)(2u-1)^2}{u^2+(1-u)^2}\ge0.
\end{equation}
For integers $1\le t\le d$, writing $u=(t-1)/d$, this implies
\[
 t\beta(u)=du\beta(u)+\beta(u)\ge(3t-3-d)/2.
\]
For $t=0$ the same lower bound holds with zero on the left.
Summing in \eqref{eq:openpivot} therefore gives
$\sum_{s\ne r}(3t_{rs}-3-d_r)\le E$. Restoring $t_{rr}=d_r$, and then
summing over rows, gives
\[
 3\sum_s t_{rs}\le E+(m+2)d_r+3(m-1),
\]
\begin{equation}\label{eq:degree-moment}
 3\sum_c e_c^2\le(2m+2)E+3m(m-1),
\end{equation}
where $e_c$ are the column degrees. The row inequality is trivial for
$d_r=0$, and $\sum_{r,s}t_{rs}=\sum_c e_c^2$.
Cauchy--Schwarz yields $E^2\le aE+b$, where
$a=2(m+1)n/3>0$ and $b=m(m-1)n\ge0$. Hence
\[
 E\le a+b/a\le\frac{2mn}{3}+\frac{2n}{3}+\frac{3m}{2}.
\]
For the first inequality, if $E>a$ then $E-a\le b/E\le b/a$;
otherwise it is immediate. Condition (C3) is invariant under transposing
the matrix and its pairs, so transposition proves the theorem.
\end{proof}

Deleting the fixed base of any admissible augmentation preserves
\textup{(C3)} and leaves $2T$ cells fully partitioned into pairs.
Consequently
\begin{equation}\label{eq:generaloriginalupper}
 \zl(m,n)\le\za(m,n)
 \le z(m,n)+\frac{mn}{3}+\frac m3+\frac{3n}{4}.
\end{equation}

\paragraph{Fixed-base counts.}
For a $C_4$-free fixed base, column-pair counting gives
$\sum_r\binom{b_r}{2}\le M$, and Cauchy--Schwarz turns this into
$F(F-m)\le mn(n-1)$. Hence
\begin{equation}\label{eq:classicalupper}
 F\le m+n\sqrt m,
\end{equation}
which is immediate for $F\le m$, and otherwise follows from
$(F-m)^2\le F(F-m)\le mn^2$. Conversely, let $n\le m=o(n^2)$ and, by the
prime number theorem, choose a prime $q\le\sqrt m$ with $q\sim\sqrt m$.
The incidence matrix of the $q^2$ affine points and $q^2$ nonvertical
lines over $\mathbb F_q$ is $C_4$-free with column degree $q$. Retaining
$\min\{n,q^2\}$ columns and padding with zero rows and columns gives a
$C_4$-free $m\times n$ base with $q\min\{n,q^2\}$ cells, and
$q^2/m\le\min\{n,q^2\}/n\le1$ shows that this count is
$(1-o(1))n\sqrt m$. Together with \eqref{eq:classicalupper},
\begin{equation}\label{eq:classicalasymptotic}
 z(m,n)\sim n\sqrt m\qquad(n\le m=o(n^2)).
\end{equation}

Now put $L=M-\sum_r\binom{b_r}{2}\ge0$.
Then
\begin{equation}\label{eq:terminaldeficit}
 m+M-F=L+\sum_r(b_r-1)(b_r-2)/2.
\end{equation}
Thus $F\le m+M$. If $m\ge M$, one row per column pair with singleton
padding attains $z(m,n)=m+M$. In this range, zero left side in
\eqref{eq:terminaldeficit} forces every maximum base to consist of
those $M$ pair rows and $m-M$ singleton rows.

%% file: unrestricted.tex
\section{The unrestricted leading term}\label{sec:unrestricted}
\begin{theorem}\label{thm:unrestricted}
As $\min\{m,n\}\to\infty$,
\[
 \za(m,n)=\frac{mn}{3}+o(mn),
\]
uniformly over the aspect ratio.
\end{theorem}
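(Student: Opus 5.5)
The upper bound will come from the density estimate already proved, and the lower bound from an explicit construction with empty fixed base, modelled on the complement of three diagonal blocks that the introduction says governs the extremal structure.

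\emph{Upper bound.} By \eqref{eq:generaloriginalupper}, which rests on Theorem~\ref{thm:paireddensity}, we have $\za(m,n)\le z(m,n)+mn/3+m/3+3n/4$. By \eqref{eq:classicalupper} and its transpose, $z(m,n)\le\min\{m+n\sqrt m,\;n+m\sqrt n\}$. Dividing by $mn$ gives $1/n+1/\sqrt m$ and $1/m+1/\sqrt n$ respectively. Both tend to $0$ as $\min\{m,n\}\to\infty$, whatever the aspect ratio, and so do the terms $m/3$ and $3n/4$ relative to $mn$. Hence $\za(m,n)\le mn/3+o(mn)$ uniformly.

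\emph{Lower bound.} Take $E_1=\emptyset$, which is allowed for $\za$. Put $a=\lfloor m/3\rfloor$ and $b=\lfloor n/3\rfloor$. Choose disjoint row sets $R_1,R_2,R_3$ of size $a$ and disjoint column sets $C_1,C_2,C_3$ of size $b$; any leftover rows and columns stay bare. Write $(i,j)$ for the block $R_i\times C_j$. We occupy every off-diagonal block $(i,j)$ with $i\ne j$. Fix bijections $\phi_{ij}\colon R_i\times C_j\to R_j\times C_i$ for $i<j$, and select each pair $\{p,\phi_{ij}(p)\}$. This gives $3ab$ pairs with $2\cdot3ab$ occupied cells, and $3ab=mn/3-O(m+n)$. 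Condition (S) is immediate. Each pair has its two cells in different row groups and different column groups, so it is nondegenerate.

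The checks of (C2) and (C3) use only the block pattern of $O$. For (C2), if $(r,c)\in(i,j)$ and $(s,d)\in(j,i)$, then the opposite cell $(r,d)$ lies in the diagonal block $(i,i)$, which is empty. For (C3), first note that a violating witness $(x,y)$ is exactly a common member of the open neighbourhoods $\mathcal Q_{(r,c)}$ and $\mathcal Q_{(s,d)}$, as in the proof of Theorem~\ref{thm:paireddensity}. So it suffices to show that paired cells have disjoint neighbourhoods.

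For $p\in(i,j)$ with $k$ the third index, I would compute directly that $\mathcal Q_p\subseteq(i,j)\cup(i,k)\cup(k,j)$. A neighbour $(s,d)$ needs $(r,d)\in O$, so $d\notin C_i$. It needs $(s,c)\in O$, so $s\notin R_j$. It also needs $(s,d)\in O$. If $s\in R_i$, this allows $d\in C_j\cup C_k$. If $s\in R_k$, it forces $d\in C_j$.

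By the same computation, the partner in $(j,i)$ has neighbourhood contained in $(j,i)\cup(j,k)\cup(k,i)$. These two unions of blocks are disjoint, so (C3) holds. Combining the two bounds gives $\za(m,n)\ge mn/3-O(m+n)=mn/3-o(mn)$, which proves the theorem. I expect no serious obstacle here: the only real work is the short block-by-block neighbourhood computation, and uniformity over the aspect ratio comes from taking the better of the two bounds on $z(m,n)$.
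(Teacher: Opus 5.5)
Your proposal is correct and follows the paper's proof: the upper bound combines \eqref{eq:generaloriginalupper} with the classical bound, and the lower bound is the empty-base three-block construction with reciprocal pairing. Two details differ only cosmetically. Taking the minimum with the transposed bound is harmless but unnecessary, since $F\le m+n\sqrt m$ alone already gives uniformity over the aspect ratio. Your (C3) check via disjoint open neighbourhoods is equivalent to the paper's more direct observation that any witness would have to lie in the empty block $R_k\times C_k$.
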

\begin{proof}[Proof of Theorem~\ref{thm:unrestricted}]
By \eqref{eq:generaloriginalupper} and the classical bound
\eqref{eq:classicalupper},
\[
 \za(m,n)\le\frac{mn}{3}+\frac{4m}{3}+n\sqrt m+\frac{3n}{4}.
\]

For the lower bound use the empty fixed base. Take three row blocks
$R_i$ of size $a=\lfloor m/3\rfloor$ and three column blocks $C_i$
of size $b=\lfloor n/3\rfloor$, leaving the remaining rows and columns
empty. Occupy the six blocks $R_i\times C_j$ with $i\ne j$, as in
Figure~\ref{fig:threeblocks}.
Using the same within-block labels, pair
\[
 ((i,\alpha),(j,\gamma))\quad\hbox{with}\quad
 ((j,\alpha),(i,\gamma)),\qquad i<j.
\]
This partitions all occupied cells into $3ab$ nondegenerate pairs.
Both opposite cells are in empty diagonal blocks.
For a pair between indices $i,j$, a common occupied row of its columns
must lie in the third row block $R_k$, and a common occupied column of
its rows must lie in $C_k$. The required witness cell would be in the
empty block $R_k\times C_k$. Thus \textup{(S), (C2), (C3)} all hold.
Since $3ab\ge mn/3-2(m+n)/3$, the two bounds give
\[
 \frac13-\frac{2}{3m}-\frac{2}{3n}
 \le\frac{\za(m,n)}{mn}
 \le\frac13+\frac{4}{3n}+\frac1{\sqrt m}+\frac{3}{4m}.
\]
Both errors vanish as $\min\{m,n\}\to\infty$.
\end{proof}

Thus $\za(m,n)=mn/3+O(m+n\sqrt m)$ throughout $m\ge n\ge2$.
For every fixed $c>0$, this is $mn/3+O_c(m)$ when $m\ge cn^2$.
\input{three_block_figure.tex}

%% file: three_block_figure.tex
\begin{figure}[htbp]
\centering
\begin{tikzpicture}[scale=.64]
\foreach \x/\y in {0/0,0/2,2/0,2/4,4/2,4/4}
  \fill[gray!20] (\x,\y) rectangle +(2,2);
\draw[step=2] (0,0) grid (6,6);
\foreach \y/\i in {5/1,3/2,1/3} \node[left] at (0,\y) {$R_\i$};
\foreach \x/\i in {1/1,3/2,5/3} \node[above] at (\x,6) {$C_\i$};
\draw[dashed] (3,1) rectangle (5,5);
\draw[dotted,thick] (1,1) rectangle (5,3);
\draw[thick] (3,5)--(1,3);
\foreach \x/\y in {3/5,1/3,5/5,5/3,3/1,1/1}
 \fill (\x,\y) circle (2.5pt);
\node[above left] at (3,5) {$p$};
\node[above left] at (1,3) {$q$};
\node at (5,1) {$\times$};
\node[right] at (5.15,1) {$w$};
\end{tikzpicture}
\caption{The three-block construction. Shaded blocks are occupied;
diagonal blocks are empty. The selected pair $p,q$ has its opposite
corners in empty blocks, satisfying (C2). A (C3) witness would have to
lie at $w\in R_3\times C_3$, also empty. The dashed and dotted
rectangles mark the two sets of corners tested by (C3).}
\label{fig:threeblocks}
\end{figure}

%% file: stability.tex
\section{Stability}
Near the density bound $2/3$, the matrix must resemble the complement of
three diagonal blocks. The proof first controls degrees, then groups rows
with similar supports, and finally recovers the column classes.
The error in the resulting approximation is linear in the density defect.

\begin{theorem}[Linear stability]\label{thm:phasestability}
There are absolute constants $C,\delta_0>0$ with the following property.
Let all $E$ occupied cells of an $m\times n$ binary matrix be partitioned
into (C3)-pairs. If $\epsilon\ge0$, $E/(mn)\ge2/3-\epsilon$, and
$\delta=\epsilon+1/m+1/n\le\delta_0$, there are row and column partitions
into three parts, with $a_i=|R_i|/m$ and $b_i=|C_i|/n$, such that
\[
 \operatorname{dist}_{\rm edit}
 \left(A,\,\overline{\bigcup_i R_i\times C_i}\right)\le C\delta mn,
 \qquad
 \sum_i(a_i-1/3)^2+\sum_i(b_i-1/3)^2\le C\delta.
\]
In particular, if $m,n\to\infty$ and $E/(mn)\to2/3$, the matrix is
within $o(mn)$ cells of this template with all part sizes asymptotic
to one third of their respective dimensions.
\end{theorem}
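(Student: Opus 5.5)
We extract stability from the slack in the proof of Theorem~\ref{thm:paireddensity}. There were three places where inequalities were lossy. The first is Cauchy--Schwarz in \eqref{eq:degree-moment}. The second is the scalar slack $g(u)\ge0$ in \eqref{eq:scalar-slack}. The third is the pivot bound \eqref{eq:openpivot} itself. If $E\ge(2/3-\epsilon)mn$, each of these losses is at most $O(\delta)$ after normalization.

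\emph{Step 1: degrees.} From Cauchy--Schwarz we get $\sum_c(e_c-\bar e)^2=O(\delta m^2n)$, where $\bar e=E/n\approx 2m/3$. Hence all but $O(\delta)n$ columns have degree $(2/3\pm\sqrt\delta\,)m$, in an averaged $L^2$ sense. The transposed argument gives the same for rows: $d_r\approx 2n/3$.

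\emph{Step 2: intersection profile.} Summed over rows $r$ and weighted, the row inequality $\sum_{s}(3t_{rs}-3-d_r)\le E$ is tight up to $O(\delta mn^2)$. The same holds for the total slack $\sum_{r,s}t_{rs}\,g\bigl((t_{rs}-1)/d_r\bigr)/(2u)$, or a suitable reweighting of it. Since $g$ vanishes only at $u=1/2$ and $u=1$, with a double zero at $1/2$ and a simple zero at $1$, this forces the following. For most pairs $(r,s)$, either $t_{rs}\approx d_r/2\approx n/3$ or $t_{rs}\approx d_r$. In other words, two typical rows share essentially their whole support or exactly half of it. We need the error to be linear rather than of square-root type. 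To get this, we use the simple zero at $u=1$, which contributes linearly, and for $u$ near $1/2$ we use a Chebyshev/Markov count in place of a pointwise estimate.

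\emph{Step 3: clustering.} Define an auxiliary graph on rows with $r\sim s$ when $|N_r\triangle N_s|\le\eta n$, for a small absolute $\eta$. By Step 2, this relation is almost transitive: the supports are sets of size about $2n/3$ with pairwise intersections near $n/3$ or $2n/3$. A standard argument then shows the relation has almost-cliques, and a packing bound shows there are at most three large classes. The bound comes from the fact that, at given cross-intersections, sets of size $2n/3$ with mutual intersections about $n/3$ number at most three; equivalently, the Gram matrix of the vectors $1_{N_r}-\tfrac23$ has negative off-diagonal entries $-1/9$ against diagonal $2/9$, which forces $k\le3$ with near equality only at $k=3$. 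Let $R_1,R_2,R_3$ be these classes, with the junk rows assigned arbitrarily. Let $S_i$ be the majority support of class $i$, and set $C_i=[n]\setminus S_i$. The Gram near-equality shows the $C_i$ nearly partition $[n]$ into thirds.

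\emph{Step 4: sizes.} Counting edits gives the edit-distance bound $O(\delta mn)$. The part sizes $a_i,b_i$ are then determined by maximizing the density $1-\sum_i a_ib_i$ of the template subject to the constraint \eqref{eq:openpivot}. Column-degree balance from Step 1 forces $\sum_i(a_i-1/3)^2=O(\delta)$, and likewise for the $b_i$. Constants are absolute once $\delta_0$ is chosen small.

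The main obstacle is Step 2/3: getting a \emph{linear} rather than $\sqrt\delta$ dependence. The double zero of $g$ at $1/2$ naturally yields only $\sqrt\delta$ control of the deviation $t_{rs}-d_r/2$. I would first try a bootstrap. Obtain the coarse $\sqrt\delta$-structure first. Then recount the pivot inequality exactly against the three-block template, where the deviation enters linearly through the cells that are misplaced relative to the template. Finally, absorb the quadratic terms using $\delta\le\delta_0$.
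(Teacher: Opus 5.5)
You have the right ingredients: degree concentration from \eqref{eq:degree-moment}, the slack $g$ with a simple zero at $1$ and a double zero at $1/2$, clustering by symmetric difference, and Markov counts at fixed thresholds. Your Gram-matrix bound is also valid: for pairwise half-intersecting supports the vectors $\mathbf 1_{N_r}-\tfrac23\mathbf 1$ have normalized Gram matrix near $\tfrac13I-\tfrac19J$, which cannot hold for four of them. This substitutes for the paper's argument that no typical row half-intersects all three prototypes.

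But the proposal does not prove the linear bound, which is the content of the theorem, and it places the difficulty in the wrong spot. The $\sqrt\delta$ control at the half-root never needs improving. Edits are counted only between a row and the prototype of its own class. There both codegree ratios exceed $3/4$, and $g(q)\ge(1-q)/4$ gives $|N_r\triangle N_s|\le4nS_{rs}$. Choosing each prototype by Markov so that $\frac1m\sum_sS_{rs}=O(\delta)$ makes each class cost $O(\delta mn)$. Rows failing the fixed $\zeta$-tests against the prototypes number only $O(\delta)m$. Your bootstrap through the pivot inequality is unspecified and is aimed at the row codegrees, where nothing more is needed.

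The step that actually threatens linearity, and which you do not handle, is the column partition. Cross-codegrees are known only to $O(\sqrt\delta)$, so Gram near-equality shows merely that $O(\sqrt\delta)n$ columns lie in zero, two or three of the sets $[n]\setminus S_i$. These sets are not a partition, and repairing them naively costs $\Theta(\sqrt\delta)mn$ edits. So your claim that ``counting edits gives $O(\delta mn)$'' is unjustified.

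The missing idea is the paper's bootstrap through column degrees. Let $A_0$ be the matrix obtained by copying prototypes to their classes; it is within $O(\delta mn)$ of $A$. A column with invalid membership has normalized $A_0$-degree within $O(\sqrt\delta)$ of $1$, $1/3$ or $0$, hence at least $1/5$ from $2/3$. By \eqref{eq:stabilitymoments}, only $O(\delta)n$ actual columns have $|v_c-2/3|\ge1/10$. For the rest, $\sum_c|v_0(c)-v_c|\le\operatorname{dist}_{\rm edit}(A,A_0)/m=O(\delta)n$ bounds their number. So only $O(\delta)n$ columns are reassigned. The masses need only $O(\sqrt\delta)$ accuracy, because the statement bounds squared deviations.

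Finally, your Gram bound gives at most three classes, not at least three. Before building the template you must show that a third class exists, rather than one or two classes leaving a constant fraction of rows uncovered. The paper does this by evaluating $\sum_st_{rs}$ in two ways, which gives $|L(r)|/m=1/3+O(\sqrt\delta)$.
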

\begin{proof}
We first show that most rows have degree close to $2n/3$ and that their
normalized codegrees are close to either $1/2$ or $1$. The root $1$
will identify rows with nearly equal supports; the root $1/2$ will
separate three such classes. The last step turns their approximate
column classes into a partition without losing the linear edit bound.

\smallskip\noindent\emph{Degree concentration and codegree slack.}
Put $u_r=d_r/n$, $v_c=e_c/m$, $\alpha=E/(mn)$, and write $t_{rs}$
for row codegrees. The degree moment and its transpose give
\begin{equation}\label{eq:stabilitymoments}
 \frac1m\sum_r(u_r-2/3)^2
 \le\frac{2\epsilon}{3}+\frac1m+\frac{2}{3n}\le\delta,
 \qquad
 \frac1n\sum_c(v_c-2/3)^2\le\delta.
\end{equation}
Indeed, the transpose of \eqref{eq:degree-moment} gives
$m^{-1}\sum_r u_r^2\le(2/3+2/(3n))\alpha+(n-1)/(mn)$.
Subtracting $4\alpha/3$ and adding $4/9$, then using
$\alpha\ge2/3-\epsilon$ and $\alpha\le1$, proves the first estimate;
the other follows in the same way.

Use $g(q)=2q\beta(q)-3q+1$ from \eqref{eq:scalar-slack}.
For $d_r>0$ put $q'_{rs}=(t_{rs}-1)_+/d_r$. Retaining the slack in
\eqref{eq:openpivot} gives
\[
 G':=\sum_{r:d_r>0}\sum_{s\ne r}d_rg(q'_{rs})
 \le(2m+2)E-3\sum_c e_c^2+3m(m-1)
 \le3\delta m^2n.
\]
Indeed $2t\beta((t-1)_+/d)\ge3t-3-d+dg((t-1)_+/d)$;
at $t=0$ the right side is $-3$, so zero codegrees are included.
Cauchy bounds the preceding right side, divided by $m^2n$, by
$2\alpha-3\alpha^2+2\alpha/m+3(m-1)/(mn)$.
Taking $\delta_0\le1/6$ ensures $\alpha\ge1/2$, where
$2\alpha-3\alpha^2$ is decreasing. It is therefore at most
$2\epsilon-3\epsilon^2$, giving the bound $3\delta$.
Since $0\le\beta\le1$ and
$\beta'(q)=2q(1-q)/(q^2+(1-q)^2)^2\le2$, we have $|g'|\le9<20$.
Removing the unit shift therefore costs at most $20m^2$, and the
unshifted diagonal has $g(1)=0$.
Define the symmetric slack
\[
 S_{rs}=u_rg(t_{rs}/d_r)+u_sg(t_{rs}/d_s),
 \qquad \frac1{m^2}\sum_{r,s}S_{rs}\le46\delta,
\]
with zero-degree summands interpreted as zero.
Splitting at $3/4$ in the formula for $g$ shows
\begin{equation}\label{eq:stabilityscalar}
 g(q)\ge\operatorname{dist}(q,\{1/2,1\})^2,
 \qquad g(q)\ge(1-q)/4\quad(3/4\le q\le1).
\end{equation}

\smallskip\noindent\emph{Near classes and their sizes.}
Fix a large absolute $A$. Call a row a representative if
\[
 |u_r-2/3|\le\sqrt{A\delta},\qquad
 \frac1m\sum_sS_{rs}\le A\delta.
\]
Markov's inequality applied to the two nonnegative averages above bounds
the fractions failing these tests by $1/A$ and $46/A$.
Choose $A$ so their sum $47/A$ is less than $1/100$.
Shrink $\delta_0$ so representatives have $u_r\ge3/5$.
For each representative,
\begin{equation}\label{eq:representativeslack}
 \frac1m\sum_s\operatorname{dist}(t_{rs}/d_r,\{1/2,1\})^2
 \le2A\delta.
\end{equation}
Put $\tau=1/20$ and
$L(r)=\{s:|N_r\mathbin\triangle N_s|\le\tau n\}$.
For $s\in L(r)$, each of $d_r-t_{rs}$ and $d_s-t_{rs}$ is at most
$\tau n$, while $d_s\ge d_r-\tau n\ge11n/20$.
Thus both codegree ratios exceed $3/4$, so
\eqref{eq:stabilityscalar} gives
\begin{equation}\label{eq:lineareditnear}
 |N_r\mathbin\triangle N_s|/n\le4S_{rs},\qquad
 \sum_{s\in L(r)}|N_r\mathbin\triangle N_s|\le4A\delta mn.
\end{equation}
Here $|N_r\mathbin\triangle N_s|=d_r+d_s-2t_{rs}$: applying
the second scalar bound to the two summands in $S_{rs}$ controls each
of $d_r-t_{rs}$ and $d_s-t_{rs}$. This linear estimate is what will
keep the final edit error at order $\delta$, rather than $\sqrt\delta$.

Fix $\zeta=1/1000$. For each representative $r$, Markov applied to
\eqref{eq:stabilitymoments} and \eqref{eq:representativeslack} shows
that, apart from $O(\delta)m$ rows $s$, both
$|u_s-2/3|\le\zeta$ and
$\operatorname{dist}(t_{rs}/d_r,\{1/2,1\})\le\zeta$ hold.
Take $\delta_0$ small enough that the representative's degree is also
within $\zeta$ of $2/3$. Write $q=t_{rs}/d_r$. If $q\ge1-\zeta$, then
$|N_r\mathbin\triangle N_s|/n=u_s-u_r+2u_r(1-q)\le4\zeta<\tau$.
If $|q-1/2|\le\zeta$, the same distance is at least
$u_s-2\zeta u_r\ge2/3-3\zeta>\tau$.
Thus the root-one alternative places $s$ in $L(r)$, and the
root-half alternative places it outside. For the row zero sets
$Z_r=[n]\setminus N_r$ and $Z_s=[n]\setminus N_s$, the latter gives
\[
 |Z_r\cap Z_s|/n\le1-u_s-(1/2-\zeta)u_r
       \le13\zeta/6-\zeta^2<3\zeta.
\]
For $\theta=|L(r)|/m$, equation~\eqref{eq:lineareditnear} gives
\[
 \frac1m\sum_{s\in L(r)}\left|\frac{t_{rs}}n-u_r\right|=O(\delta).
\]
Outside $L(r)$, all but $O(\delta)m$ rows take the half-root alternative.
On those rows $|q-1/2|=\operatorname{dist}(q,\{1/2,1\})$; Cauchy and
\eqref{eq:representativeslack} bound its average by $\sqrt{2A\delta}$.
The exceptional rows contribute $O(\delta)$ because all normalized
codegrees lie in $[0,1]$. Consequently
\[
 \frac{\sum_s t_{rs}}{mn}
 =\frac{u_r(1+\theta)}2+O(\sqrt\delta).
\]
On the other hand, interchanging the codegree sum gives
\[
 \frac{\sum_s t_{rs}}{mn}
 =\frac1n\sum_{c\in N_r}v_c
 =\frac{2u_r}{3}+O(\sqrt\delta),
\]
since Cauchy bounds the last error by
$\sqrt{u_r\,n^{-1}\sum_c(v_c-2/3)^2}\le\sqrt\delta$.
Comparing the two expressions and using $u_r\ge3/5$ proves
$|L(r)|/m=1/3+O(\sqrt\delta)$ for every representative.

\smallskip\noindent\emph{Three prototypes cover almost all rows.}
Choose three representatives successively outside the previous near
classes. Also require, from each previously chosen representative's
perspective, codegree distance to $\{1/2,1\}$ at most $B\sqrt\delta$.
Each extra test excludes at most $2A/B^2$ of the rows. Choose $B$ after
$A$, large enough that $4A/B^2<1/100$. At the third choice the two
near classes exclude at most $2/3+O(\sqrt\delta)$ of the rows, and
nonrepresentatives and the two extra tests exclude less than $2/100$.
Thus all three choices are possible after shrinking $\delta_0$.
Their mutual codegrees must be near the half root, since the
one root would put them in the same near class. Take $\delta_0$ small
enough that $\max\{B,\sqrt A\}\sqrt\delta\le\zeta$. For two representatives $r_i,r_j$ the
half root then gives
$|N_{r_i}\mathbin\triangle N_{r_j}|\ge d_{r_j}-2\zeta d_{r_i}\ge(2/3-3\zeta)n>2\tau n$,
so their near classes are disjoint by the Hamming triangle inequality.
Put $Z_i=Z_{r_i}$. The stronger, $B\sqrt\delta$ tests give
$t_{r_i r_j}/n=1/3+O(\sqrt\delta)$, and hence
$|Z_i\cap Z_j|/n=1-u_{r_i}-u_{r_j}+t_{r_i r_j}/n=O(\sqrt\delta)$.
Together with the prototype degrees and inclusion--exclusion, this proves
\begin{equation}\label{eq:prototypezeros}
 |Z_i|=n/3+O(\sqrt\delta)n,\quad
 |Z_i\cap Z_j|=O(\sqrt\delta)n,\quad
 |\bigcup_iZ_i|\ge n-O(\sqrt\delta)n.
\end{equation}

Only $O(\delta)m$ rows lie outside all three near classes.
To see this, discard rows failing either fixed $\zeta$ test for any
prototype, costing $O(\delta)m$ rows. A remaining row outside all near
classes would satisfy $|Z_s\cap Z_i|\le3\zeta n$ for each $i$.
Its intersection with the union of the prototypes is therefore at most
$9\zeta n$, and at most $O(\sqrt\delta)n$ further columns lie outside
that union. By \eqref{eq:prototypezeros} this would give
\[
 (1/3-\zeta)n\le|Z_s|\le9\zeta n+O(\sqrt\delta)n,
\]
which is impossible for small $\delta_0$.
Assign the leftover rows arbitrarily to the three disjoint near classes,
and copy each prototype to its resulting row class, obtaining $A_0$.
The three near classes cost $O(\delta)mn$ edits by
\eqref{eq:lineareditnear}; the leftover rows cost at most $n$ edits each.
Thus
$\operatorname{dist}_{\rm edit}(A,A_0)=O(\delta)mn$ and
$a_i=1/3+O(\sqrt\delta)$.

\smallskip\noindent\emph{Recovering a column partition.}
The sets $Z_i$ need not yet form a partition. A column of $A_0$ has normalized degree
$v_0(c)=1-\sum_{i:c\in Z_i}a_i$.
If it belongs to zero, two or three prototype zero sets, this degree is
within $O(\sqrt\delta)$ of $1$, $1/3$ or $0$, respectively, and hence
at least $1/5$ from $2/3$ for small $\delta_0$.
Only $O(\delta)n$ original columns are at
least $1/10$ from $2/3$, by \eqref{eq:stabilitymoments}.
The other columns with an invalid zero-set membership count have
$|v_0(c)-v_c|\ge1/10$; there are only $O(\delta)n$ of them because
$\sum_c|v_0(c)-v_c|$ is bounded by the edit count divided by $m$.
Assign these exceptional columns to single zero-set labels. This produces
a partition $C_i$ by changing $O(\delta)n$ columns and costs another
$O(\delta)mn$ edits. Each $C_i$ differs from $Z_i$ in $O(\delta)n$ places,
so \eqref{eq:prototypezeros} gives $b_i=1/3+O(\sqrt\delta)$.
The two squared mass bounds follow by summing the six squared
$O(\sqrt\delta)$ deviations. The choices were made in the order
$A$, then $B$, then a sufficiently small $\delta_0$; $\tau$ and $\zeta$
were fixed numerical constants throughout. Thus one absolute choice of
$\delta_0$ suffices for the whole proof.
\end{proof}

\subsection{Reciprocal pairs and exact holes}
\begin{lemma}[Reciprocal matching and clean holes]\label{lem:cleanholes}
Let an admissible configuration have $F$ fixed cells and $T$ selected pairs,
and put
\[
 \eta=(2/3-2T/(mn))_++1/m+1/n+F/(mn).
\]
There are absolute $c,\tau_0>0$ such that, whenever
$0<\tau\le\tau_0$ and $\eta\le c\tau$, there are three-part row and
column partitions, with masses $a_i,b_i$, whose squared deviations from
$1/3$ sum to $O(\eta)$ and whose off-diagonal template differs from full
occupancy in $O(\eta)mn$ cells, and good sets with bad-line fractions
$O(\eta/\tau)$. Good rows and columns have at most $\tau n$ and
$\tau m$ errors against their off-diagonal types. All but
$O(\eta/\tau)mn$ selected pairs are reciprocal, each reciprocal class
has $a_i b_jmn+O(\eta/\tau)mn$ pairs, and every good-by-good diagonal
cell is empty.

If $m,n\to\infty$, $F=o(mn)$ and $2T/(mn)\to2/3$, taking $\tau=\sqrt\eta$
gives asymptotically balanced partitions, uniformly $o(n),o(m)$ good-line
errors, $o(m),o(n)$ bad lines, and $mn/9+o(mn)$ pairs per reciprocal class,
with the same exact diagonal holes.
\end{lemma}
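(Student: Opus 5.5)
We would first pass to the selected cells alone. Deleting the fixed base preserves (C3), and the $2T$ selected cells are partitioned into (C3)-pairs, so Theorem~\ref{thm:paireddensity} and Theorem~\ref{thm:phasestability} apply to the matrix $A'$ of selected cells. Its density $2T/(mn)$ is at least $2/3-\eta$, so $\delta\le\eta+1/m+1/n=O(\eta)$ and, for $c$ small, $\delta\le\delta_0$. Theorem~\ref{thm:phasestability} then gives three-part row and column partitions with squared mass deviations $O(\eta)$ and an edit distance $O(\eta)mn$ between $A'$ and the off-diagonal template. Adding back the $F\le\eta mn$ fixed cells changes the full occupied set $O$ by at most $\eta mn$ further cells. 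This proves the first two claims for $O$ as well.

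\emph{Good lines.} Call a row good if it has at most $\tau n$ disagreements with its off-diagonal type, counting both $O$ and $A'$. Then Markov's inequality bounds the fraction of bad rows by $O(\eta)/\tau$, and similarly for columns. The same Markov step also lets us require that a good row has few partners of its selected cells landing in the wrong place, but we will handle this through the pair count instead.

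\emph{Reciprocal pairs.} A pair $((r,c),(s,d))$ is reciprocal if, for some $i\ne j$, we have $r\in R_i$, $c\in C_j$, $s\in R_j$, $d\in C_i$ (or the symmetric variant), i.e.\ it matches the pairing in the construction of Theorem~\ref{thm:unrestricted}. Discard pairs with a cell on a bad line or off-template: there are $O(\eta/\tau)mn$ of them. Now take a remaining pair with both cells in good lines.
\begin{itemize}
\item[$\bullet$] Suppose $r\in R_i$, $c\in C_j$. If $s$ lies in the same row class $R_i$, then the good rows $r,s$ share about $2n/3-2\tau n$ occupied columns. Together with a good column through $c$ and $d$ having many occupied rows in common, this should produce a (C3) witness $(x,y)$ with $x,y$ in common neighborhoods. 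Counting, $|N_r\cap N_s|$ is large and each $y$ there sees many $x$, and good lines have only $\tau$ errors.
\item[$\bullet$] A similar witness exists unless $s\in R_j$ and $d\in C_i$. The only configuration where the common neighborhoods $(R_k\times C_k)$ are empty is exactly the reciprocal one.
\end{itemize}
Since the witness must be a single cell $(x,y)$ with $(x,c),(x,d),(r,y),(s,y)\in O$, we pick $x$ in a row class adjacent to both $c$ and $d$ and $y$ in a column class adjacent to both $r$ and $s$. The template then forces $(x,y)$ to be occupied except when $(x,y)$ falls in the diagonal block, and we use goodness to find such $x,y$ with $(x,y)\in O$. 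Class sizes follow because each nonempty off-diagonal block $R_i\times C_j$ is covered by selected cells up to $O(\eta/\tau)mn$ errors, and each reciprocal class $\{i,j\}$ uses two blocks, giving $a_ib_jmn+O(\eta/\tau)mn$ pairs (after checking $a_ib_j\approx a_jb_i$ from the balance).

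\emph{Exact holes.} This is the main obstacle. Suppose a good row $r\in R_i$ and a good column $c\in C_i$ had $(r,c)\in O$. We would find a reciprocal pair between classes $j,k\ne i$ whose rows $s\in R_j$, $s'\in R_k$ and columns $d\in C_k$, $d'\in C_j$ make $(r,c)$ a (C3) witness. This requires $(r,d),(r,d')\in O$ and $(s,c),(s',c)\in O$, which holds for most choices by goodness of $r$ and $c$. The reciprocal classes contain $\Omega(mn)$ pairs, while the failures are at most $O(\tau)mn+O(\eta/\tau)mn$, so for $\tau_0$ and $c$ small such a pair exists, giving a contradiction. The asymptotic clause follows by substituting $\tau=\sqrt\eta\to0$.
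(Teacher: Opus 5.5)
Your proposal is correct and follows essentially the paper's route. You apply stability to the selected cells and restore the fixed cells, use Markov for good lines, and choose a witness row class avoiding both endpoint column classes and a witness column class avoiding both endpoint row classes, off the diagonal, which fails exactly for reciprocal pairs. You then count each reciprocal class via the retained selected cells of one off-diagonal block, and obtain exact holes because, by (S), the missing adjacencies of the good row and column kill at most $2\tau mn$ of the $\Omega(mn)$ reciprocal pairs.

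The one step to state precisely is the witness cell: the template does not force any individual cell to be occupied. As the paper does, note that goodness of the endpoint lines makes the candidate rectangle (rows adjacent to both endpoint columns, times columns adjacent to both endpoint rows) have at least $mn/25$ cells inside an off-diagonal block. The $O(\eta)mn$ edit error therefore cannot empty it.
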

\begin{proof}
We first obtain the template and good lines from stability. We then use
(C3) twice: a large rectangle of possible witnesses rules out
nonreciprocal pairs, and the resulting supply of reciprocal pairs rules
out every occupied good diagonal cell. All neighborhoods in these two
steps refer to the full occupied set, including fixed cells.

\smallskip\noindent\emph{The template and exceptional pairs.}
Delete the fixed cells. The remaining $2T$ cells are partitioned into
pairs, and deletion preserves (C3), so Theorem~\ref{thm:phasestability}
applies with density defect $(2/3-2T/(mn))_+$. Its parameter is at most
$\eta$. Restore the fixed cells, costing at most $F\le\eta mn$ edits.
The full matrix is therefore within $O(\eta)mn$ cells of the template,
with squared class-mass errors $O(\eta)$.

Call a cell of $R_i\times C_j$ a cell of type $(i,j)$.
Mark a row bad if it has more than $\tau n$ discrepancies from its
template row, and a column bad if it has more than $\tau m$ discrepancies.
Markov gives $O(\eta/\tau)m$ bad rows and $O(\eta/\tau)n$ bad columns.
There are also only $O(\eta)mn$ occupied diagonal cells, since every
such cell is an error against the template. Discard from consideration
the selected pairs touching any of these cells or lines. By (S), their
number is at most the number of affected selected cells, hence
$O(\eta/\tau)mn$. This is only a counting exclusion: the underlying
full occupied matrix is unchanged.

\smallskip\noindent\emph{Why the remaining pairs are reciprocal.}
Consider a remaining pair $((r,p),(s,q))$ of types $(i,j),(k,l)$.
Its endpoint rows and columns are good, and $i\ne j$, $k\ne l$.
We seek a witness row class $R_X$ and column class $C_Y$ satisfying
\[
 X\notin\{j,l\},\qquad Y\notin\{i,k\},\qquad X\ne Y.
\]
The first two restrictions make all four required endpoint adjacencies
occupied in the template; the last makes the witness cell itself
off-diagonal. If either excluded set has one element, there are at least
two choices on that side, so distinct $X,Y$ can be chosen. Otherwise
each complement is a singleton. They coincide exactly when
$\{i,k\}=\{j,l\}$, which, because both endpoint types are off-diagonal,
means $(k,l)=(j,i)$. Thus the choice fails precisely for reciprocal pairs.

For a nonreciprocal pair make this choice. Writing $N_O$ for row and
column neighborhoods in full occupancy, put
\[
 W_R=(R_X\cap N_O(p)\cap N_O(q))\setminus\{r,s\},\qquad
 W_C=(C_Y\cap N_O(r)\cap N_O(s))\setminus\{p,q\}.
\]
Goodness of the two endpoint columns and rows gives
\[
 |W_R|\ge(a_X-2\tau)m-2\ge m/5,\qquad
 |W_C|\ge(b_Y-2\tau)n-2\ge n/5.
\]
Here the mass bounds give $a_X,b_Y=1/3+O(\sqrt\eta)$.
Choose $c,\tau_0$ small enough to make these estimates hold; the terms
$1/m+1/n\le\eta$ also control the removal of the actual endpoint
indices. These bounds remain valid if an endpoint row or column repeats.
The rectangle $W_R\times W_C$ lies in an off-diagonal template block
and has at least $mn/25$ cells. For sufficiently small $\eta$, the total
$O(\eta)mn$ edit error cannot empty it. Any occupied cell in this
rectangle supplies all five cells of a (C3) witness, a contradiction.
Every remaining selected pair is therefore reciprocal.

\smallskip\noindent\emph{Counting the reciprocal classes.}
The block $R_i\times C_j$, $i\ne j$, has
$a_i b_jmn+O(\eta)mn$ occupied cells. At most $F\le\eta mn$ of
these are fixed. The exceptional pairs account for at most two selected
cells each in a given block. Every remaining selected cell
in this block belongs to exactly one retained reciprocal pair with the
block $R_j\times C_i$. Hence this class has
$a_i b_jmn+O(\eta/\tau)mn$ retained pairs. Restoring any excluded
reciprocal pairs changes the count by at most the same error, giving
the asserted count for the whole class. Since $\eta/\tau\le c$ and
$a_i,b_j=1/3+O(\sqrt\eta)$, each retained class has at least $mn/20$
pairs after choosing $c,\tau_0$ sufficiently small.

\smallskip\noindent\emph{Exact diagonal holes.}
Suppose a good diagonal cell $(x,y)\in R_h\times C_h$ were occupied,
and let $i,j$ be the other two indices. Use the retained reciprocal
class between types $(i,j)$ and $(j,i)$. For any of its pairs, say
$((r,p),(s,q))$, the four adjacencies needed to make $(x,y)$ a witness
are $(x,p),(x,q),(r,y),(s,y)$; all four are off-diagonal in the template.

The good row $x$ has at most $\tau n$ missing off-diagonal cells.
Each corresponding column contains at most $m$ selected cells, each
belonging to only one pair by (S). Thus these missing adjacencies
exclude at most $\tau mn$ pairs from the class. The at most $\tau m$
missing off-diagonal cells of column $y$ exclude at most $\tau mn$
more. Choose $\tau_0$ also so that $2\tau_0<1/20$.
Fewer than $mn/20$ pairs are excluded, so one remains for which all
four adjacencies are occupied. Its endpoint rows lie in $R_i,R_j$ and
its endpoint columns in $C_j,C_i$, so their actual indices avoid $x,y$.
Together with the assumed occupied cell $(x,y)$, this is a (C3) witness.
The contradiction proves the exact holes, whether the putative diagonal
cell was fixed or selected.

Finally, in the stated asymptotic regime $\eta\to0$.
The choice $\tau=\sqrt\eta$ eventually satisfies both
$\tau\le\tau_0$ and $\eta\le c\tau$. The good-line errors and bad-line
fractions then tend to zero uniformly, and the mass and reciprocal-count
estimates give the qualitative conclusions.
\end{proof}

%% file: second_order.tex
\section{The unrestricted asymptotic expansion}\label{sec:secondorder}
The lower construction inserts a dense $C_4$-free fixed base into the
three-block template. Equal fixed counts in reciprocal blocks mean that
two fixed cells displace one selected pair, increasing the objective by
one. For the upper bound, we estimate full occupancy $E$ and fixed count
$F$ separately, then use $\za=(E+F)/2$ at an optimizer.
Stability supplies the structure needed for these estimates.

\begin{proof}[Proof of Theorem~\ref{thm:secondorder}]
Fix any sequence in the stated regime; all choices of maximizing
configurations below are arbitrary. The finite construction of Lv--Lu--Fang~\cite[Theorem 4]{llf}, for an
odd prime $p$, is a tripartite simple $C_4$-free graph with part size
$a=p(p-1)/2$, in which every vertex has exactly $(p-1)/2$ neighbors
in each other part. Explicitly, with
$R=\{1,\ldots,(p-1)/2\}\subseteq\mathbb F_p$, the parts $V_1,V_2,V_3$ are disjoint copies of $R\times\mathbb F_p$,
and $(a,x)\in V_i$ is joined to
$(b,y)\in V_{i+1}$, indices modulo three, when $y-x=ab$. A variant of this construction
appears in~\cite[Section~2]{llf2}. For fixed $(a,x)$, each $b\in R$ determines
$y$, and each $a'\in R$ determines the $x'$ with $(a',x')\in V_{i-1}$
adjacent to $(a,x)$, which gives the degrees. Two vertices
$(a_1,x_1)\ne(a_2,x_2)$ of one part have at most one common neighbor: two
common neighbors $(b_1,y_1),(b_2,y_2)$ in the same part give
$x_1-x_2=\pm(a_1-a_2)b_j$ for $j=1,2$, hence $a_1\ne a_2$, $b_1=b_2$, and
then $y_1=y_2$; common neighbors $(b,y)\in V_{i+1}$ and $(c,z)\in V_{i-1}$
give $y-z=a_j(b+c)$ for $j=1,2$, and $b+c\ne0$ on $R$ forces $a_1=a_2$ and
$x_1=x_2$. A four-cycle in a tripartite graph has two vertices in one part; since
adjacent vertices have different parts, these vertices are opposite and
have two common neighbors. Thus the graph is $C_4$-free.
Choose a prime $p\le\sqrt{2m/3}$ with
$p/\sqrt{2m/3}\to1$, using the prime number theorem. Then
$m_0=3a\le m$ and $m_0/m\to1$.

Retain all $m_0$ graph vertices as rows of its adjacency matrix, and
choose any $L=\min\{\lfloor n/3\rfloor,a\}$ vertices in each part
as columns. Pad with zero rows and columns to balanced class sizes
$m_i,n_i$ summing to $m,n$. This is possible since $m_i\ge a$ and
$n_i\ge L$. The fixed matrix is $C_4$-free: an adjacency-matrix
rectangle would give a graph $C_4$, with four distinct labels because
the graph has no loops. Counting by retained columns, whose original
neighbors are all retained as rows, gives exactly
\[
 F_{ij}=L(p-1)/2\quad(i\ne j),\qquad F=3L(p-1).
\]
Use the complete off-diagonal template. In each pair of reciprocal
buckets match the available cells to the smaller bucket size, leaving
unmatched cells empty. Condition \textup{(S)} holds and both opposite
cells are holes. A \textup{(C3)} witness in the template would lie in the
third diagonal block; deleting unmatched cells preserves \textup{(C3)}.
With $b_{ij}=m_i n_j-F_{ij}$,
we have $T=\sum_{i<j}\min\{b_{ij},b_{ji}\}$ and
\[
 \begin{split}
 F+T
 &=\frac{mn-\sum_i m_i n_i}{2}+\frac F2
       -\frac12\sum_{i<j}|m_i n_j-m_j n_i|\\
 &=\frac{mn}{3}+\frac F2-O(m+n).
 \end{split}
\]
Here balanced classes give $\sum_i m_i n_i=mn/3+O(1)$, because the
deviations $m_i-m/3$ and $n_i-n/3$ are bounded and sum to zero, so the
cross terms vanish; the fixed-cell counts cancel from the bucket
differences. If
$L=\lfloor n/3\rfloor$, then $3L/n\to1$; otherwise
$m_0<n\le m$ implies the same conclusion. Consequently
$F=(\sqrt{2/3}+o(1))n\sqrt m$. Since
$(m+n)/(n\sqrt m)=\sqrt m/n+1/\sqrt m\to0$, this proves the lower bound.

\smallskip
\noindent\emph{The upper bound: controlling full occupancy.}
Take any maximizing configuration, with $T$ selected
pairs and $E=F+2T$ full occupied cells. The classical $C_4$ bound gives
$F\le m+n\sqrt m=O(n\sqrt m)$. The lower bound and
Theorem~\ref{thm:paireddensity}, applied after deleting fixed cells,
give $2T/(mn)\to2/3$ and $E/(mn)\to2/3$.
Apply the qualitative conclusion of Lemma~\ref{lem:cleanholes} to full
occupancy. Its partitions, good sets and exact diagonal holes require
no maximum-base assumption. Write these partitions as $R_1,R_2,R_3$
and $C_1,C_2,C_3$, with $|R_i|=m/3+o(m)$ and
$|C_i|=n/3+o(n)$. We first show that the fixed cells increase the
accepted-cell count by too little to produce a second-order excess in
$E$. We then bound $F$ by counting column pairs within the three classes.

Call a fixed cell bad if its row or its column is bad, and write
$F_{\rm bad}$ for their number and $F_{\rm good}=F-F_{\rm bad}$.
Then $F_{\rm bad}=o(n\sqrt m)$. Indeed $k=o(m)$ bad
rows contain at most $k+n\sqrt k=o(n\sqrt m)$ fixed cells, since
$k/(n\sqrt m)\le\sqrt m/n\to0$. Likewise $\ell=o(n)$ bad columns
contain at most $m+\ell\sqrt m=o(n\sqrt m)$ fixed cells.

Write $d_r,e_c$ for full degrees, $t_{rs}$ for row codegrees,
$O_{rc}=\mathbf1_{\{(r,c)\in O\}}$, and $x_+=\max\{x,0\}$.
For each nonempty row $r$, sample the finite open-pivot lists used in
Theorem~\ref{thm:paireddensity} from all its occupied cells. Let
$A_{r,k}$ be the expected number of accepted fixed cells at stage $k$.
Condition \textup{(C3)} permits at most one accepted half of each
selected pair, so the
finite inequality is
\[
 \sum_{s\ne r}t_{rs}
   \alpha_k\!\left(\frac{(t_{rs}-1)_+}{d_r}\right)
 \le T+A_{r,k}.
\]
The uniform limit $\alpha_k\to\beta$, where
$\beta(u)=u^2/[u^2+(1-u)^2]$, gives $A_r=\lim_k A_{r,k}$ and
bounds limiting open acceptance by $T+A_r$. Set $A_r=0$ for empty
rows. More explicitly, for $d_r>0$,
\[
 A_r=\sum_{\substack{(s,c)\in E_1\\s\ne r}}
 O_{rc}\beta\!\left(\frac{(t_{rs}-1)_+}{d_r}\right).
\]
Thus the unpaired fixed cells enter through their own acceptance
probabilities. No full-pairing bound is applied to the matrix containing
fixed cells.

A good fixed cell $(s,c)\in R_j\times C_i$ has $j\ne i$, because
every good-by-good diagonal cell is an exact hole.
For a nonempty pivot row $r\ne s$, its open acceptance probability is
$O_{rc}\beta((t_{rs}-1)_+/d_r)$, at most
$O_{rc}\beta(t_{rs}/d_r)$ since $\beta$ is nondecreasing; for $r=s$
it is zero. Good pivot rows in
class $i$ contribute zero by clean holes, and those in class $j$
contribute at most one each. If $r$ is good and belongs to the third
class $h$, its template neighborhood is $C_i\cup C_j$, whereas that of
$s$ is $C_i\cup C_h$. Their intersection is $C_i$. The uniform
$o(n)$ neighborhood errors in Lemma~\ref{lem:cleanholes} therefore give
\[
 d_r=2n/3+o(n),\qquad t_{rs}=|C_i|+o(n)=n/3+o(n).
\]
Consequently $\beta(t_{rs}/d_r)=1/2+o(1)$ uniformly in these rows and
fixed cells. There are $m/3+o(m)$ rows in each class and $o(m)$ bad
pivot rows, each contributing at most one. Summing the class bounds
$0,1,1/2+o(1)$ shows that this cell contributes at most
$m/2+o(m)$ to $\sum_r A_r$.
Bad fixed cells contribute at most $m$ each, and therefore
\begin{equation}\label{eq:fixedacceptsecond}
 \sum_r A_r\le(m/2+o(m))F_{\rm good}+mF_{\rm bad}
 \le mF/2+o(mn\sqrt m).
\end{equation}

The shifted minorant
$2t\beta((t-1)_+/d)\ge3t-d-3$ for integers $d>0$,
$0\le t\le d$, with $t=0$ handled separately, gives
\[
 3\sum_{s=1}^m t_{rs}\le2T+2A_r+(m+2)d_r+3(m-1).
\]
Empty rows satisfy this trivially. Summing and using $2T=E-F$ gives
\[
 3\sum_c e_c^2
 \le(2m+2)E-mF+2\sum_r A_r+3m(m-1).
\]
The $-mF$ term comes from summing $2T=E-F$ over all pivot rows.
Equation~\eqref{eq:fixedacceptsecond} bounds the compensating term
$2\sum_r A_r$ by $mF+o(mn\sqrt m)$, so the fixed-cell contribution
cancels to the required order. The other errors are smaller on the same
scale, since
\[
 \frac{2E}{mn\sqrt m}=O(1/\sqrt m)\to0,\qquad
 \frac{3m(m-1)}{mn\sqrt m}=O(\sqrt m/n)\to0.
\]
Cauchy in the $n$ column degrees gives
$E^2\le n\sum_c e_c^2$, and hence
$3E^2/n\le2mE+o(mn\sqrt m)$. The identity
\[
 3E^2/n-2mE=(3E/n)(E-2mn/3)
\]
has prefactor $3E/n\sim2m$, because $E/(mn)\to2/3$.
Dividing by this positive prefactor yields the one-sided estimate
\begin{equation}\label{eq:fullsecondbound}
 E\le2mn/3+o(n\sqrt m).
\end{equation}

\smallskip
\noindent\emph{Bounding the fixed cells.}
Retain only fixed cells on good rows and columns, leaving
$F_{\rm good}=F-o(n\sqrt m)$ cells. Let $\widehat b_r$ be the retained
degree of row $r$, with $\widehat b_r=0$ for bad rows. A good row in
$R_j$ has no retained cell in $C_j$, by the exact good-by-good diagonal
holes. Its degree therefore splits as $\widehat b_r=u_r+v_r$ between
the other two column classes; set $u_r=v_r=0$ for bad rows.
The number of column pairs within these
classes that occur in this row is
\[
 \binom{u_r}{2}+\binom{v_r}{2}
 =\frac{u_r^2+v_r^2-\widehat b_r}{2}
 \ge\frac{\widehat b_r^2}{4}-\frac{\widehat b_r}{2}.
\]
No column pair can occur in two retained rows: its four fixed cells
would form a $C_4$. All the pairs just counted belong to one of the
three column classes, so their total is at most
\[
 \sum_{i=1}^3\binom{|C_i|}{2}=n^2/6+o(n^2).
\]
Summing the row inequalities and using
$\sum_r\widehat b_r=F_{\rm good}$ gives
\[
 \sum_r\widehat b_r^2
 \le4\sum_{i=1}^3\binom{|C_i|}{2}+2F_{\rm good}
 =\frac23n^2+o(n^2),
\]
where $F_{\rm good}\le F=O(n\sqrt m)=o(n^2)$.
Cauchy over the $m$ rows, including the zero retained degrees, now gives
$F_{\rm good}^2\le m\sum_r\widehat b_r^2$.
Restoring the $o(n\sqrt m)$ discarded fixed cells yields
\[
 F\le\left(\sqrt{2/3}+o(1)\right)n\sqrt m.
\]
Together with \eqref{eq:fullsecondbound} and $z_A=(E+F)/2$, this
proves the matching upper bound, since $\sqrt{2/3}/2=1/\sqrt6$.
Finally, the lower construction bounds the same optimal objective
$z_A$ for every choice of extremizer. Combining it with the upper
bounds just proved gives
\[
 \begin{aligned}
 F=2z_A-E&\ge\bigl(\sqrt{2/3}-o(1)\bigr)n\sqrt m,\\
 E=2z_A-F&\ge2mn/3-o(n\sqrt m).
 \end{aligned}
\]
Thus both estimates are equalities to their stated orders. The sequence
of dimensions and the maximizing configurations were arbitrary, so this
also proves the assertions about every extremizer.
\end{proof}

%% file: separation.tex
\section{Separation below quadratic row growth}\label{sec:generalsquare}
For $n\le m=o(n^2)$, maximum size makes large empty base rectangles
impossible, while (C3) produces an empty rectangle from each selected
pair. Together, these facts prevent both endpoint degree sums from being
large. The polynomial certificate then bounds the number of pairs.
The lower bound is Lemma~\ref{lem:universalquarter} in
Section~\ref{sec:everymaximum}. The finite example uses explicit
empty-rectangle bounds and an elementary moment inequality.

\begin{lemma}\label{lem:subquadraticmixing}
Suppose $n\to\infty$ and $n\le m=o(n^2)$. Uniformly over maximum
$C_4$-free $m\times n$ bases, every empty rectangle has area $o(mn)$.
Moreover, restricting to $n-o(n)$ columns makes the maximum column
degree $o(n)$.
\end{lemma}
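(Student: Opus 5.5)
The plan is to exploit near-equality in the column-pair count \eqref{eq:classicalupper}. For a maximum base, \eqref{eq:classicalasymptotic} gives $F=z(m,n)=(1-o(1))n\sqrt m$. Since $m=o(n^2)$, we have $m=o(n\sqrt m)$, so every inequality in the chain
\[
 F(F-m)\le m\sum_r b_r(b_r-1)\le 2mM
\]
is tight up to a factor $1+o(1)$. I will extract two consequences, uniformly over maximum bases.

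\emph{Row degrees concentrate.} Tightness of Cauchy--Schwarz says the row degrees $b_r$ are close in $L^2$ to their mean $\bar b=F/m=(1+o(1))n/\sqrt m$. Precisely, $\sum_r(b_r-\bar b)^2=o(m\bar b^2)$.

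\emph{Almost every column pair is covered.} The deficit $L=M-\sum_r\binom{b_r}2$ of \eqref{eq:terminaldeficit} is $o(n^2)$. So all but $o(n^2)$ column pairs occur, each in exactly one row.

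From these I also want average column degrees on any linear-size column set. Since $\sum_c e_c=F$, the pairs through a column $c$ number $\sum_{r\ni c}(b_r-1)\le n-1$. Combined with row concentration, this shows that for any $Y$ with $|Y|=yn$, the cells in $Y$ number $\sum_{c\in Y}e_c=(y+o(1))n\sqrt m$. The upper bound comes from the per-column pair bound together with concentration. The lower bound comes from the complementary upper bound on $Y^c$ and the total $F$. I expect this averaging step to be the main technical obstacle. Rows with degree far from $\bar b$ must be handled by Markov's inequality applied to the $L^2$ concentration, and I must make sure the exceptional rows carry only $o(n\sqrt m)$ cells. Here the classical bound $k+n\sqrt k$ for $k$ rows, used in the proof of Theorem~\ref{thm:secondorder}, should suffice.

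\emph{The empty rectangle.} Now suppose $X\times Y$ is empty, with $|X|=xm$, $|Y|=yn$ and $xy\ge\varepsilon$. All cells in columns of $Y$ then lie in the $(1-x)m$ rows outside $X$. Let $k_r$ be the number of cells of row $r$ in $Y$. Then $\sum_{r\notin X}k_r=(y+o(1))n\sqrt m$. Every pair of these cells inside one row is a distinct column pair contained in $Y$, and distinctness follows from $C_4$-freeness. Cauchy--Schwarz over the rows outside $X$ gives
\[
 \sum_{r\notin X}\binom{k_r}2\ge\frac{(y n\sqrt m)^2}{2(1-x)m}-o(n^2)=\frac{y^2n^2}{2(1-x)}-o(n^2).
\]
This exceeds $\binom{yn}2$ by at least $xy^2n^2/2-o(n^2)\ge\varepsilon y n^2/2-o(n^2)$, which is a contradiction for large $n$. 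The estimate uses only the cell count in $Y$ and $C_4$-freeness inside $Y$, so it is uniform over maximum bases. I will apply it either directly or to the transpose-symmetric choice, whichever keeps $y$ bounded below. Since $xy\ge\varepsilon$ forces $y\ge\varepsilon$, no case split is really needed.

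\emph{The second claim.} This is a simple counting step. Because $\sum_c e_c=F=O(n\sqrt m)$, for any fixed $\delta>0$ the number of columns with $e_c\ge\delta n$ is at most $F/(\delta n)=O(\sqrt m/\delta)=o(n)$. A diagonal choice $\delta=\delta(n)\to0$ slowly then removes $o(n)$ columns and leaves maximum column degree at most $\delta n=o(n)$, uniformly over maximum bases.
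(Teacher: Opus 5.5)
Your proof is correct, and its outer structure matches the paper's. Near-equality in the column-pair count gives row-degree concentration. The column set of the empty rectangle must then carry nearly its proportional share of cells. Cauchy--Schwarz over the rows outside the rectangle finally overloads the column pairs available inside it.

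The two routes differ in how that share estimate is obtained. The paper first zeroes the $o(n)$ columns of degree above $\sqrt{\rho}\,n$, with $\rho=\sqrt m/n$. It then combines the two near-equality defects into the single identity $V+\ell=n(n-1)+f-f^2/m=o(n^2)$, where $\ell$ counts ordered column pairs with no common row. From $\sum_r s_rd_r\ge b(n-1)-\ell$ and one Cauchy--Schwarz against $V$ it deduces $pf_C\ge(1-o(1))bn$.

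You instead bound every column from above. A column meets at most $(n-1)/((1-\epsilon)\bar b-1)=(1+o(1))\sqrt m$ rows of degree at least $(1-\epsilon)\bar b$. The $o(m)$ lighter rows carry only $o(n\sqrt m)$ cells altogether, simply because each has fewer than $\bar b$ cells. Subtracting the resulting bound for $Y^c$ from $F$ gives the share for $Y$.

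What each route buys:
\begin{itemize}
\item Your version uses no pair-coverage information; the fact $L=o(n^2)$ that you list is never needed.
\item It needs no column truncation for the first claim.
\item It yields a reusable statement that holds simultaneously for all column sets of linear size.
\item The paper's version avoids the threshold split with a slowly vanishing $\epsilon$ and is algebraically shorter.
\end{itemize}

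One bookkeeping point needs fixing. Your displayed lower bound with error $-o(n^2)$ is not uniform as $x\to1$, because an $o(n^2m)$ error is divided by $(1-x)m$. Write instead $(y-\eta)^2/(1-x)\ge(1+x)(y^2-2\eta)\ge y^2+\varepsilon^2-4\eta$ with $\eta=o(1)$. Alternatively, first shrink $X$ to at most $m/2$ rows, which keeps the rectangle empty and its area linear in $mn$.
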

\begin{proof}
By \eqref{eq:classicalasymptotic}, $F=z(m,n)\sim n\sqrt m$.
Put $\rho=\sqrt m/n\to0$ and $\tau=\sqrt\rho$.
Set all entries in columns of degree greater than $\tau n$ to zero.
Their number is $k\le F/(\tau n)=O(\tau n)=o(n)$, and they contain
at most $m+k\sqrt m=o(F)$ fixed cells by the elementary $C_4$ bound.
The modified matrix has $f\sim n\sqrt m$ cells and maximum column
degree $\Delta=o(n)$. Every empty rectangle of the original matrix
remains empty in it.

Write $d_r$ for its row degrees and $p=f/m$.
Let $\ell$ count ordered pairs of distinct columns with no common
occupied row. Pair counting gives
\[
 V:=\sum_r(d_r-p)^2\ge0,\qquad
 V+\ell=n(n-1)+f-f^2/m=o(n^2).
\]
Fix an empty $a\times b$ rectangle. For its column set $C$, put
$s_r=|N(r)\cap C|$ and $f_C=\sum_r s_r$.
Counting ordered column pairs, then applying Cauchy, gives
\[
 \sum_r s_r^2\le b^2+b\Delta,\qquad
 \sum_r s_rd_r\ge b(n-1)-\ell,\qquad
 |pf_C-\sum_r s_rd_r|\le\sqrt{V(b^2+b\Delta)}=o(n^2).
\]
If $ab\ge\eta mn$ for a fixed $\eta>0$, then $a\ge\eta m$ and
$b\ge\eta n$, so $pf_C\ge(1-o(1))bn$.
On the other hand, $s_r=0$ on the rectangle's rows, and Cauchy gives
\[
 (pf_C)^2\le p^2(m-a)(b^2+b\Delta)
 \le(1-\eta+o(1))(bn)^2,
\]
a contradiction. Hence every empty rectangle has area $o(mn)$.
All estimates are uniform over maximum bases, since they use only their
common size, the degree threshold, and the bound on removed cells.
\end{proof}

\input{joint_bound.tex}

\begin{proof}[Proof of Theorem~\ref{thm:separation}]
Lemma~\ref{lem:universalquarter} proves the lower bound.
Let $A$ be full occupancy above a maximum $m\times n$ base.
For a selected pair $(x,c),(y,d)$, its common occupied rows outside
$x,y$ and common occupied columns outside $c,d$ form an empty rectangle
by (C3). Its side lengths are at least
\[
 \deg_A(c)+\deg_A(d)-m-2,\qquad
 \deg_A(x)+\deg_A(y)-n-2,
\]
including for degenerate pairs.
Lemma~\ref{lem:subquadraticmixing} gives a uniform $\xi\to0$ such
that an empty base rectangle cannot have both sides greater than
$\xi m$ and $\xi n$. Every pair therefore has normalized full degree
sum at most $1+\xi+2/n$ on one axis.
Delete only the base, retaining the grid and every selected pair.
The degrees can only decrease, and (C3) is preserved. The moment error
$\eta_{m,n}=o(1)$ above is eventually below $1/100$. Thus
Lemma~\ref{lem:lightmatrix} with $\epsilon=\xi+2/n$ gives $2T\le(3/5+o(1))mn$ uniformly.
Since $F\sim n\sqrt m=o(mn)$, this yields
$\zl(m,n)\le3mn/10+o(mn)$. Theorem~\ref{thm:unrestricted}
gives the asymptotic gap.

For the finite example, take the prime $q=43$ and $N=q^2+q+1=1893$.
Plane incidence attains the classical upper bound $F=N(q+1)$.
Equality in the pair count and Cauchy forces every maximizing base to
have row degree $q+1$; applying the same argument to its transpose
gives column degree $q+1$.
For an empty $a\times b$ rectangle with column set $C$, put
$s_r=|N(r)\cap C|$. Regularity and $C_4$-freeness give
\[
 \sum_r s_r=(q+1)b,\qquad
 \sum_r s_r^2\le b(b-1)+(q+1)b=b^2+qb.
\]
The rectangle's $a$ rows contribute zero, so Cauchy yields
\[
 (q+1)^2b^2\le(N-a)(b^2+qb).
\]
For $b>0$, divide by $b$ and use $(q+1)^2=N+q$ to obtain
$ab+q(a+b)\le qN$; the case $b=0$ follows from $a\le N$.
Since
$246^2+86\cdot246=81672>81399=qN$, the integer sides cannot
both exceed $245$. The same C3 argument therefore gives a full row-
or column-degree sum at most $N+245+2$ for every pair.
Deleting the base subtracts $2(q+1)$ from either sum, even for a repeated
endpoint line. For the selected matrix $A$, put $\mu=2T/N^2$, let $u_r,v_c$
be its normalized degrees, and write $\mathbb E$ for the uniform cell
average. Then
\[
 r=\frac{N+245+2-2(q+1)}{2N}=\frac{342}{631},
 \qquad I:=\mathbb E[A_{rc}\min(u_r,v_c)]\le r\mu.
\]
Indeed, the two endpoint minima of each selected pair sum to at most
$2r$, because each minimum is bounded by its degree on the light axis.
Summing over the $T$ disjoint pairs and dividing by $N^2$ gives
$I\le2rT/N^2=r\mu$.
The degree-moment bound \eqref{eq:degree-moment} and its transpose give
\[
 S:=\mathbb E[A_{rc}(u_r+v_c)]
 \le\left(\frac43+\frac4{3N}\right)\mu+\frac{2(N-1)}{N^2}.
\]
Since $\mathbb E[(u_r-v_c)^2]=S-2\mu^2$, apply
$|x|\le2x^2+1/8$ and drop $A$ only from the nonnegative quadratic term to obtain
\[
 I\ge\frac12\left[S-2(S-2\mu^2)-\frac\mu8\right]
    =2\mu^2-\frac S2-\frac\mu{16}.
\]
Combining these and putting
$B=r+2/3+2/(3N)+1/16=115535/90864$ yields
\[
 f(T):=8T^2-2BN^2T-(N-1)N^2\le0.
\]
This quadratic has one positive root, and
$f(1{,}139{,}847)=43{,}849{,}617/8>0$.
Thus $T\le1{,}139{,}846$, and restoring $N(q+1)=83{,}292$
fixed cells gives $\zl(N,N)\le1{,}223{,}138$.

For the lower bound, the graph of Lv--Lu--Fang~\cite[Theorem 4]{llf}
recalled in Section~\ref{sec:secondorder}, with $p=37$ and
$R=\{1,\ldots,18\}$, is tripartite and $C_4$-free with $666$ vertices per
part, degree $36$, and $35{,}964$ edges. Delete any $35$ vertices from each
part. At most $105\cdot36=3780$ edges are lost, so the remaining graph
has $631$ vertices per part and at least $32{,}184$ edges.
Use the same parts for
matrix rows and columns in the construction of
Theorem~\ref{thm:secondorder}. Its exact objective is
\[
 F+T=N^2/3+e(G)\ge1{,}194{,}483+32{,}184=1{,}226{,}667.
\]
This proves the finite separation. An explicit choice of the deleted
vertices, checked by exact integer computation in the ancillary witness
files, attains $1{,}226{,}771$; we retain the simpler bound. No uniqueness
of the maximizing plane incidence base or minimality of this dimension is
asserted.
\end{proof}

%% file: joint_bound.tex
\begin{lemma}\label{lem:lightmatrix}
Let a binary $m\times n$ matrix $A$ have $E$ ones partitioned into pairs.
For its row and column degrees $d_r,e_c$, put $\mu=E/(mn)$,
$u_r=d_r/n$, and $v_c=e_c/m$. Suppose $\epsilon\ge0$, $0\le\eta<1/100$,
every pair $((r,c),(s,d))$, allowing repeated lines, satisfies
\[
 u_r+u_s\le1+\epsilon\quad\text{or}\quad v_c+v_d\le1+\epsilon,
\]
and
\[
 S:=\frac1m\sum_r u_r^2+\frac1n\sum_c v_c^2\le\frac{4\mu}{3}+\eta.
\]
Then $E<(3/5)(1+\epsilon)mn$.
\end{lemma}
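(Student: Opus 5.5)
The plan is to pass from the pair condition to a single averaged inequality, as in the finite computation at $N=1893$, and then close the argument with a polynomial certificate in the normalized degrees. Write $\mathbb E$ for the uniform average over the $mn$ cells. For a pair $((r,c),(s,d))$ the hypothesis gives $\min(u_r,v_c)+\min(u_s,v_d)\le1+\epsilon$, since on the light axis each minimum is at most its degree there. Summing over the $E/2$ disjoint pairs gives
\[
 I:=\mathbb E\bigl[A_{rc}\min(u_r,v_c)\bigr]\le\frac{(1+\epsilon)\mu}{2}.
\]
I would then record the exact identities $\mathbb E[A_{rc}u_r]=m^{-1}\sum_ru_r^2$ and $\mathbb E[A_{rc}v_c]=n^{-1}\sum_cv_c^2$, so that $\mathbb E[A_{rc}(u_r+v_c)]=S$. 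By Cauchy, $m^{-1}\sum_ru_r^2\ge\mu^2$ and $n^{-1}\sum_cv_c^2\ge\mu^2$, hence $2\mu^2\le S\le4\mu/3+\eta$. The uniform identity $\mathbb E[(u_r-v_c)^2]=S-2\mu^2$ also holds, and it controls the $A$-weighted quadratic after the factor $A_{rc}\le1$ is dropped.

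Next write $\min(x,y)=\tfrac12(x+y)-\tfrac12|x-y|$ and bound $|x-y|$ from above by a polynomial $P(x,y)$ valid on $[0,1]^2$. The simplest choice is $P=\lambda(x-y)^2+1/(4\lambda)$, which yields
\[
 I\ge\frac{(1-\lambda)S}{2}+\lambda\mu^2-\frac{\mu}{8\lambda}.
\]
Combined with the upper bound on $I$ and with $S$ pinned to $[2\mu^2,4\mu/3+\eta]$, this is an inequality in $\mu$ and $\epsilon$ alone. The goal is to choose the certificate so that it forces $\mu<\tfrac35(1+\epsilon)$, with the $\eta<1/100$ slack absorbed by strictness.

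This last step is the main obstacle. A quick check shows that the one-parameter quadratic majorant, used at the critical density, gives only about $\mu\lesssim(1+\epsilon)/2+1/8$, which is weaker than $3/5$. So I expect to need a sharper certificate. I would first try higher-degree terms whose averages are still expressible through $S$ and $\mu$: the mixed moment $\mathbb E[A_{rc}u_rv_c]$, the cubic row moments $m^{-1}\sum_ru_r^3$ (which equal $\mathbb E[A_{rc}u_r^2]$), and the uniform products $\mathbb E[u_rv_c]=\mu^2$. The aim is a polynomial $Q$ with $\min(x,y)\ge Q(x,y)$ on $[0,1]^2$ and a nonnegative combination of the available moment constraints reducing $\mathbb E[A\,Q]$ to a function of $(\mu,S)$.

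I would search for the certificate numerically as a small sum-of-squares / LP problem. The feasible region is the simplex of the variables $(\mu,S)$ together with the auxiliary moments. I would look for multipliers showing that $\mu\ge\tfrac35(1+\epsilon)$ together with $\eta<1/100$ makes the system infeasible, and then rationalize the multipliers and verify the final univariate inequality exactly. The dependence on $\epsilon$ should enter linearly through the term $(1+\epsilon)\mu/2$, so I would design the certificate at $\epsilon=0$ and then check that the resulting bound scales with the factor $1+\epsilon$.
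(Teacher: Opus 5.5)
There is a genuine gap, and it is more than leaving the certificate to a numerical search: within your framework the certificate cannot exist. The only consequence of the pairing you keep is the averaged inequality $I\le(1+\epsilon)\mu/2$. This inequality, together with $S\le\frac43\mu+\eta$ and any identity or inequality valid for all binary matrices, is satisfiable at $\mu=3/5$ with $\eta<1/100$.

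Here is an explicit matrix showing this. Take $m=n=550$, with $220$ rows of type $H$, $330$ of type $L$, $242$ columns of type $H'$ and $308$ of type $L'$. Fill $H\times L'$ and $L\times H'$ completely, leave $L\times L'$ empty, and put a biregular pattern of density $7/11$ in $H\times H'$. Then:
\begin{itemize}
\item $u_H=0.84$, $u_L=0.44$, $v_{H'}=47/55$, $v_{L'}=0.4$, so $\mu=3/5$ and $E$ is even;
\item $S=0.3984+2209/6875+0.0896\approx0.8093=\frac43\mu+0.0093$;
\item the cell-average of the minima is
\[
 I=0.112\cdot0.84+0.224\cdot0.4+0.264\cdot0.44=0.29984<\mu/2 .
\]
\end{itemize}
So at $\epsilon=0$ and $\eta\approx0.0094$, every constraint your scheme uses holds. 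No polynomial $Q\le\min$ and no combination of moment identities (cubic moments, $\mathbb E[u_rv_c]=\mu^2$, and so on) can force $\mu<3/5$.

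This matrix violates only the pairing itself. The ones in $H\times H'$ have $\min(u,v)=0.84$, so each would need a partner whose minimum is at most $0.16$, and there are none. A linear test function sees only the mean of the minima, not whether they can be matched in pairs with sum at most one.

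The paper's proof keeps exactly this information, in the following steps.
\begin{itemize}
\item Assuming $\mu\ge\frac35(1+\epsilon)$, it rescales $W=\theta A$ with $\theta=3/(5\mu)\le1/(1+\epsilon)$. Both degree means become exactly $3/5$, and each pair's two endpoint minima sum to at most $1$.
\item For a nonlinear nondecreasing $h$ with $h(1-x)=-h(x)$ (degree five, steep near $0$ and $1$), this gives $\langle Wh(\min(U,V))\rangle\le0$. For the paper's $h$ this average is positive on the example above.
\item It uses the whole-grid identities $\langle Wa(U)\rangle=\frac1m\sum_rU_ra(U_r)$ and $\langle(V-\frac35)b(U)\rangle=0$ for univariate polynomials $a,b$.
\item It averages $G+a(U)+a(V)+h(\min(U,V))$ and $G$ with weights $W$ and $1-W$. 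Here $G\ge0$ and $G+a(u)+a(v)+h(u)\ge0$ on $[0,1]^2$ are checked exactly by Bernstein subdivision.
\item Everything collapses to $0\le-\frac{81}{100}+\frac45\theta+\theta^2\eta<0$.
\end{itemize}
The rescaling also handles $\epsilon$ and the non-homogeneous moment hypothesis. Your plan to work at $\epsilon=0$ and then ``check the scaling'' leaves that open.

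To repair your approach, you would at least have to replace $\min$ by such a nonlinear $h(\min)$ before searching for multipliers.
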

\begin{proof}
Suppose otherwise. Set $\theta=3/(5\mu)\le1$ and $W=\theta A$.
The weighted row and column degrees $U_r=\theta u_r$, $V_c=\theta v_c$
have common mean $3/5$, and $0\le W\le1$. Each pair has degree sum
at most one on one axis, since $\theta(1+\epsilon)\le1$.

Use the polynomials $a,b,h$ specified in Appendix~\ref{sec:certificate};
there $h$ is shown to be nondecreasing with $h(1-x)=-h(x)$.
The two endpoint minima of each pair
sum to at most one, and $x+y\le1$ gives
$h(x)+h(y)\le h(x)+h(1-x)=0$. As both cells have weight $\theta$,
\[
 \langle W_{rc}h(\min(U_r,V_c))\rangle\le0,
\]
where brackets denote the uniform average over all $mn$ positions.

Set $\lambda=-81/100$ and
\begin{align*}
 G(u,v)&=\lambda+u^2+v^2-ua(u)-va(v)
                    +(v-3/5)b(u)+(u-3/5)b(v),\\
 P(u,v)&=G(u,v)+a(u)+a(v)+h(u).
\end{align*}
Appendix~\ref{sec:certificate} proves $G,P\ge0$ on $[0,1]^2$.
Since $G$ is symmetric,
$G(u,v)+a(u)+a(v)+h(\min(u,v))\ge0$ as well.
Average its convex combination with $G$, with weights $W,1-W$.
For example, since $n^{-1}\sum_cW_{rc}=U_r$,
\[
 \langle Wa(U)\rangle=\frac1m\sum_r U_ra(U_r),\qquad
 \langle(V-3/5)b(U)\rangle
 =\left(\frac1n\sum_cV_c-\frac35\right)
   \frac1m\sum_r b(U_r)=0.
\]
Together with the transposed identities, these cancel the $a$ terms
and remove the $b$ terms. The averages here are over all grid positions;
no independence of row and column degrees on occupied cells is assumed.
Consequently
\[
 0\le\lambda+\theta^2S+
       \langle Wh(\min(U,V))\rangle
 \le-\frac{81}{100}+\frac45\theta+\theta^2\eta
 \le-\frac1{100}+\eta<0,
\]
a contradiction.

\end{proof}

For a matrix whose occupied cells are partitioned into (C3)-pairs,
\eqref{eq:degree-moment} and its transpose give the moment hypothesis with
\[
 \eta_{m,n}=\frac{2\mu}{3}\left(\frac1m+\frac1n\right)
       +\frac{m+n-2}{mn}=o(1)
\]
as both dimensions grow.

%% file: universal_lower.tex
\section{A sharp guarantee for every maximum base}\label{sec:everymaximum}
The quarter construction leaves certain fixed rows bare: they receive
no selected cells. These anchors ensure that each new pair has only one
possible (C3) witness, which a filter keeps empty. We first prove the
finite counting lemma, then choose enough row and column pairs to obtain
the quarter guarantee.

\begin{theorem}\label{thm:everymaximum}
As $\min\{m,n\}\to\infty$, every maximum $C_4$-free $m\times n$
fixed base admits an original augmentation with
\[
 T\ge(1/4-o(1))mn,
\]
uniformly over the dimensions and the base. For each fixed $c_0>0$,
the stronger bound $T\ge mn/4-O_{c_0}(m)$ holds uniformly when
$m\ge c_0n^2$. The constant $1/4$ is sharp:
when $n\to\infty$ and $m/n^2\to\infty$, there are maximum bases
whose best augmentation has objective $(1/4+o(1))mn$, whereas
$\zl(m,n)=(1/3+o(1))mn$.
More generally, the lower guarantee holds for arbitrary $C_4$-free bases $B$
along any sequence $m\ge n\to\infty$ with
\[
 z(m,n)-|B|=o\bigl(n\min\{n,\sqrt m\}\bigr).
\]
\end{theorem}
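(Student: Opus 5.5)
The plan is to prove the lower guarantee by an explicit two-block construction laid over the given base, and sharpness by a separate upper bound for one specially chosen maximum base. For the construction, given a maximum base $B$ (or any $C_4$-free $B$ with $z(m,n)-|B|=o(n\min\{n,\sqrt m\})$), split the rows into $R_1,R_2$ and the columns into $C_1,C_2$, each of nearly equal size. The off-diagonal template is $R_1\times C_2\cup R_2\times C_1$, which has density $1/2$. Using within-block labels, I pair $(r,c)\in R_1\times C_2$ with its reciprocal $(s,d)\in R_2\times C_1$, as in the proof of Theorem~\ref{thm:unrestricted}, and omit every pair meeting a fixed cell. Each retained pair has both opposite corners $(r,d)\in R_1\times C_1$ and $(s,c)\in R_2\times C_2$ in the diagonal blocks. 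These blocks carry no selected cells, so (C2) can fail only if both corners are fixed cells; I delete such pairs. If about $mn/4-o(mn)$ pairs survive all the filters below, the first claim follows.

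The substance is (C3). A witness $(x,y)$ for a pair in columns $c\in C_2$, $d\in C_1$ needs a row $x$ occupied at both $c$ and $d$. Among selected cells, column $c$ is occupied only in $R_1$ and column $d$ only in $R_2$, so at least one of $(x,c),(x,d)$ must be fixed. The same holds on the column side for $(r,y),(s,y)$. This is where I use bare anchors (Lemma~\ref{lem:bareanchors}). I reserve $o(m)$ rows and $o(n)$ columns that receive no selected cells, and route the relevant fixed incidences through them. By $C_4$-freeness, two columns share at most one fixed row, so each pair has at most one fully fixed candidate witness. The filter then deletes every pair whose unique candidate is occupied. Charging each deleted pair to a fixed cell, or to a fixed column-pair incidence, bounds the loss by $O(|B|\cdot\max\text{degree})$ plus the cost of the anchors. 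I would estimate this using \eqref{eq:classicalupper} and Lemma~\ref{lem:subquadraticmixing}, or, when $m\ge c_0n^2$, the structure forced by \eqref{eq:terminaldeficit} (pair rows plus singleton rows). In the latter case the loss is $O_{c_0}(m)$. The hypothesis $z(m,n)-|B|=o(n\min\{n,\sqrt m\})$ enters only to make these structural counts, which hold exactly for maximum bases, hold up to $o(mn)$ losses.

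For sharpness, take $m/n^2\to\infty$. Then $m\ge M$, and every maximum base consists of the $M$ pair rows together with $m-M$ singleton rows. I would choose the singletons so that each column receives about $(m-M)/n$ of them. In that base every column is heavily occupied by fixed cells, so row-side witnesses $x$ for a common column pair are abundant. My aim is to show that (C3) forces, for each selected pair, its two rows to have full degree sum at most $n(1+o(1))$. This uses only rows: the two-column incidence argument is vacuous here. With the row constraint in hand, I run the moment argument of Theorem~\ref{thm:paireddensity} restricted to that axis to get $2T\le(1/2+o(1))mn$. Since $|B|=m+M=o(mn)$, the best objective for this base is $(1/4+o(1))mn$. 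Meanwhile $\zl(m,n)=(1/3+o(1))mn$ follows from Theorem~\ref{thm:sharpasymptotic}, as stated in the overview table.

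I expect this sharpness upper bound to be the main obstacle. The light-axis certificate of Lemma~\ref{lem:lightmatrix} only gives $3/5$, and only a one-sided constraint is available there. So I will need a stronger, one-axis version: when every pair satisfies $u_r+u_s\le1+o(1)$ specifically on the row axis, the weighted inequality $\sum A_{rc}u_r\le T(1+o(1))$ should combine with $\sum_r u_r^2\ge\mu^2 m$ (Cauchy) to give $\mu\le1/2+o(1)$. I would try that simple Cauchy route first, before looking for a new polynomial certificate.
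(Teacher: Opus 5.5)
Your reciprocal two-block template is, after relabeling, exactly the union of the $2\times2$ checkerboards of Lemma~\ref{lem:bareanchors}. The pair $((1,\alpha),(2,\gamma))$, $((2,\alpha),(1,\gamma))$ sits on the row pair $\{(1,\alpha),(2,\alpha)\}$ and the column pair $\{(1,\gamma),(2,\gamma)\}$. But your lower-bound argument lacks both mechanisms that make that lemma work.

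\textbf{The filter is too weak.} You remove a pair only when one of its own cells is fixed, or both of its opposite corners are. Then (C3) fails through \emph{mixed} witnesses, not only fully fixed ones. Take a pair $((r,c),(s,d))$ and a row $x\in R_1\setminus\{r\}$ with $(x,d)$ fixed. The pair of $x$ on the column pair $\{c,d\}$ has only one fixed corner, so it generally survives and $(x,c)$ is selected. Likewise a fixed $(s,y)$ with $y\in C_2\setminus\{c\}$ leaves $(r,y)$ selected. Typically $(x,y)\in R_1\times C_2$ is selected too, so $(x,y)$ is a witness. Under any balanced split such diagonal fixed cells are plentiful: almost every column pair in $C_1\times C_2$ is covered, and its covering row has a fixed cell in a diagonal block. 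Reserving $o(m)$ bare rows cannot absorb them, because $k=o(m)$ rows carry at most $k+n\sqrt k=o(F)$ fixed cells. The missing step is to discard a block's pair whenever \emph{any} of its four cells is fixed. This costs at most $F=o(mn)$ pairs. It also makes every full common neighbor of a paired row set or column set a common fixed neighbor, which is unique by $C_4$-freeness.

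\textbf{The anchors must be bare.} After that filter, the only candidate is $(A_j,c_i)$, where $A_j$ is the common fixed row of the column pair and $c_i$ the common fixed column of the row pair. This is decided by the base alone only if $A_j$ is bare. With all rows paired, $(A_j,c_i)$ may be a selected cell, and you do not say how to choose the column pairing to prevent this. The paper takes a column $x_0$ minimizing base degree plus leave degree, keeps the $o(m)$ rows through $x_0$ bare, and pairs columns inside their disjoint supports. These supports cover $n-o(n)$ columns because $x_0$ has leave degree $o(n)$. Hence $h(x)\le b_{\max}/2=o(n)$ for $x\ne x_0$, and the loss is at most $Rb_{\max}/2=o(mn)$. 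Your charge $O(|B|\cdot\text{maximum degree})$ is at least of order $mn$ when $m=n$.

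\textbf{The regime $c_0n^2\le m<\binom n2$.} Here a maximum base is not pair rows plus singletons. Almost all rows have degree $k$ or $k+1$ for some $k\ge2$ (Lemma~\ref{lem:classicalprofile}), and the paper needs Lemma~\ref{lem:profilequarter} for the $O_{c_0}(m)$ bound.

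\textbf{Your sharpness example is wrong.} Spreading the $m-M$ singleton rows evenly over the columns gives a maximum base with $T^*(B)\ge(1/3-o(1))mn$. Leave the pair rows bare. Put each singleton row fixed in $C_j$ into some $R_i$ with $i\ne j$, balancing the three classes, and use the three-block template of Theorem~\ref{thm:unrestricted}. Every active row of $R_i$ is occupied only inside $C_j\cup C_k$, and bare pair rows have only two occupied cells. So (C2) and (C3) hold exactly as there; this is how Lemma~\ref{lem:highrows} obtains $\zl(m,n)=mn/3+O(m)$ for $m\ge M$. Those rows have full degrees near $2n/3$, so your row constraint $u_r+u_s\le1+o(1)$ fails. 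A fixed singleton at $y$ is a witness only if it is also selected at both $c$ and $d$, which this template never allows.

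The paper instead fixes \emph{all} singletons at column~$1$. For a pair with two singleton endpoints in columns $c,d\ne1$, column~$1$ is then a common fixed neighbor of its endpoint rows. Any third singleton row $x$ selected at both $c$ and $d$ therefore yields the witness $(x,1)$. This gives a \emph{column} constraint $e_c+e_d\le S+2$. Your plain Cauchy route, applied to columns, then yields $T'\le(S+2)(n-1)/4$, so no new certificate is needed. The pairs meeting the $M$ pair rows number $O(n^3)=o(mn)$ by (S).
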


\begin{lemma}[Bare anchors]\label{lem:bareanchors}
Let $B$ be any $C_4$-free base with $F$ fixed cells. Choose disjoint
ordered row pairs $(P_i,Q_i)$, $1\le i\le R$, and disjoint ordered
column pairs $(y_j,d_j)$, $1\le j\le C$. Suppose each row pair has a
common fixed column $c_i$, and each column pair has a common fixed row
$A_j$ outside all paired rows. The rows $A_j$ may repeat. Put
\[
 h(x)=\#\{j:B_{A_j,x}=1\}.
\]
There is an admissible augmentation of the full base with
\[
 F+T\ge RC-\sum_{i=1}^R h(c_i).
\]
\end{lemma}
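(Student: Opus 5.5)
The plan is to take the natural grid of candidate pairs, delete the candidates that break a condition, and charge each deletion either to a fixed cell or to a term $h(c_i)$. For every $i\le R$ and $j\le C$, the candidate is the nondegenerate pair
\[
 \pi_{ij}=\bigl((P_i,y_j),(Q_i,d_j)\bigr),
\]
which gives $RC$ candidates in total. Each cell of $\bigcup_i\{P_i,Q_i\}\times\bigcup_j\{y_j,d_j\}$ plays a definite role for exactly one index pair $(i,j)$:
\begin{itemize}
\item $(P_i,y_j)$ and $(Q_i,d_j)$ are the members of $\pi_{ij}$;
\item $(P_i,d_j)$ and $(Q_i,y_j)$ are its opposite corners.
\end{itemize}
Because the row pairs and the column pairs are disjoint, (S) holds automatically. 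It also follows that a selected cell in row $P_k$ lies in some column $y_\ell$, one in row $Q_k$ lies in some $d_\ell$, and the anchor rows $A_j$ and all unpaired lines stay bare.

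\emph{Deletions charged to fixed cells.} First delete $\pi_{ij}$ if one of its members is fixed; the fixed member is charged. Then delete $\pi_{ij}$ if (C2) fails. No opposite corner is ever selected, since $(P_i,d_j)$ lies in a $P$-row but a $d$-column, and similarly for $(Q_i,y_j)$. So (C2) fails only when both corners are fixed, and we charge the corner $(P_i,d_j)$. Members and $P$-corners are distinct cell types, and each cell determines its $(i,j)$, so every fixed cell is charged at most once. These deletions therefore cost at most $F$, which the $F$ term on the left-hand side absorbs.

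\emph{The main case of (C3).} A witness for $\pi_{ij}$ is a row $x\notin\{P_i,Q_i\}$ and a column $y\notin\{y_j,d_j\}$ with
\[
 (x,y),\ (x,y_j),\ (x,d_j),\ (P_i,y),\ (Q_i,y)\in O .
\]
Suppose first that $x$ is unpaired and $y$ is unpaired.
\begin{itemize}
\item Unpaired lines carry no selected cells, so $(x,y_j)$ and $(x,d_j)$ are fixed. Since $A_j$ is a common fixed row of $y_j,d_j$ outside the paired rows, $C_4$-freeness forces $x=A_j$.
\item Likewise $(P_i,y)$ and $(Q_i,y)$ are fixed, and $c_i$ is a common fixed column of $P_i,Q_i$, so $y=c_i$.
\end{itemize}
The only witness of this kind is therefore the cell $(A_j,c_i)$, which must be fixed. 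Filtering out every $\pi_{ij}$ with $B_{A_j,c_i}=1$ removes exactly $\sum_i h(c_i)$ candidates. Combined with the previous step, this gives $F+T\ge RC-\sum_i h(c_i)$, provided the remaining witness types are excluded.

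\emph{Main obstacle: witnesses through paired lines.} It remains to rule out witnesses in which $x$ or $y$ is itself a paired line, so that some of the five cells may be selected rather than fixed. Take $x=P_k$ with $k\ne i$.
\begin{itemize}
\item $(P_k,d_j)$ cannot be selected, so it is fixed.
\item If $(P_k,y_j)$ were also fixed, then rows $P_k,A_j$ and columns $y_j,d_j$ would form a $C_4$.
\item Hence $(P_k,y_j)$ is selected, namely the member of $\pi_{kj}$. But $(P_k,d_j)$ is the $P$-corner of $\pi_{kj}$. This either already caused a deletion charged to that cell, or it must be shown to conflict with the remaining witness cells.
\end{itemize}
The case $x=Q_k$ is symmetric. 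For paired columns $y=y_\ell$ or $y=d_\ell$, the same bookkeeping applies with the roles of rows and columns exchanged, using the anchor $c_i$ in place of $A_j$.

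My first attempt will be to show that each mixed configuration either yields a fixed rectangle with an anchor line, contradicting $C_4$-freeness, or involves a fixed corner already charged. The fallback is to add one more deletion rule that is charged to fixed cells injectively and still keeps the total loss within $F$. Getting this case analysis exhaustive, with a single consistent injective charging, is the step I expect to require the most care.
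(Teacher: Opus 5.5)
There is a genuine gap, and it is exactly the step you flag as the main obstacle. With your deletion rules (delete if a member is fixed, if both opposite corners are fixed, or if $B_{A_j,c_i}=1$), the construction can violate (C3). So the first attempt cannot succeed: a single fixed opposite corner does not delete a candidate, yet it creates an extra common neighbor.

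Concretely, take rows $P_1,Q_1,P_2,Q_2,A$, columns $y,d,c_1,c_2$, and fixed cells $(A,y)$, $(A,d)$, $(P_1,c_1)$, $(Q_1,c_1)$, $(P_2,c_2)$, $(Q_2,c_2)$, $(P_2,d)$, $(P_2,c_1)$.
\begin{itemize}
\item Any two rows share at most one fixed column, so $B$ is $C_4$-free, and $B_{A,c_1}=B_{A,c_2}=0$.
\item Your rules keep both $\pi_{11}$ and $\pi_{21}$, since the other corner $(Q_2,y)$ of $\pi_{21}$ is empty.
\item Then row $P_2$ with column $c_1$ is a (C3) witness for $\pi_{11}$: the cells $(P_2,c_1),(P_2,d),(P_1,c_1),(Q_1,c_1)$ are fixed and $(P_2,y)$ is selected.
\item No cell here was charged, and no fixed rectangle with an anchor line arises.
\end{itemize}
The same failure occurs on the column side whenever anchors repeat, which is the main use of the lemma. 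If $A_\ell=A_j$ and only the corner $(Q_i,y_\ell)$ of $\pi_{i\ell}$ is fixed, then $y_\ell$ becomes a second common neighbor of $P_i,Q_i$. In that case $(A_j,y_\ell)$ is a fixed witness for $\pi_{ij}$.

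The missing idea is the paper's block-level filter. Keep $\pi_{ij}$ only if the whole block $\{P_i,Q_i\}\times\{y_j,d_j\}$ contains no fixed cell, and $B_{A_j,c_i}=0$.
\begin{itemize}
\item The blocks are pairwise disjoint, so each fixed cell kills at most one candidate. The loss is still at most $F$, and this single rule replaces your member and corner rules with no injectivity bookkeeping.
\item Structurally, every block is now either a fixed-free checkerboard or contains no selected cell.
\item Hence a paired row is occupied in both $y_j,d_j$ only if both cells are fixed, which with $A_j$ is a $C_4$.
\item Symmetrically, a paired column lies in $N_O(P_i)\cap N_O(Q_i)$ only if both its cells are fixed, which forces it to be $c_i$.
\end{itemize}
So $N_O(y_j)\cap N_O(d_j)=\{A_j\}$ and $N_O(P_i)\cap N_O(Q_i)=\{c_i\}$, with no case analysis. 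Your main-case argument then shows that $(A_j,c_i)$ is the only possible witness, and it is empty by the filter because row $A_j$ is bare. (C2) is automatic, since both opposite corners of each selected pair are empty.
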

\begin{proof}
In the block $\{P_i,Q_i\}\times\{y_j,d_j\}$, add
$((P_i,d_j),(Q_i,y_j))$ if the block contains no fixed cell and
$B_{A_j,c_i}=0$. All other cells retain their fixed status. Disjoint
blocks give (S), and both opposite cells of each pair are empty, giving
(C2). A selected block is a checkerboard; an unselected block is bare.
Thus additions create no common neighbor of either paired set of lines.
By $C_4$-freeness their full common neighborhoods are exactly
\[
 N_O(y_j)\cap N_O(d_j)=\{A_j\},\qquad
 N_O(P_i)\cap N_O(Q_i)=\{c_i\}.
\]
The only possible (C3) witness is $(A_j,c_i)$. Its row is unpaired,
so this cell is empty by the filter. By disjointness, at most $F$ blocks contain fixed
cells, and exactly $\sum_i h(c_i)$ fail the filter before overlap with
that exclusion. Subtracting these from $RC$ proves the bound. The count
$h$ includes repeated anchor indices.
\end{proof}

\begin{lemma}\label{lem:universalquarter}
If $n\to\infty$ and $n\le m=o(n^2)$, every $C_4$-free $m\times n$
base with $F\sim z(m,n)$ admits $T\ge mn/4-o(mn)$.
\end{lemma}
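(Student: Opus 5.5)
The plan is to apply Lemma~\ref{lem:bareanchors} with about $m/2$ row pairs and about $n/2$ column pairs. Then $RC=mn/4-o(mn)$, and the two losses, $F=O(n\sqrt m)=o(mn)$ and $\sum_i h(c_i)$, must both be $o(mn)$. Write $F\sim n\sqrt m$ by \eqref{eq:classicalasymptotic}. The typical row degree is then about $n/\sqrt m$ and the typical column degree about $\sqrt m$. From the terminal deficit count, $\sum_r\binom{b_r}2=M-o(n^2)$, so almost every column pair has a (unique) common fixed row.

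\emph{Step 1: anchor rows.} First I fix a small anchor set $S$ of rows, with $|S|=o(m)$, whose neighborhoods together carry a near-perfect matching of the columns. The column graph has $y\sim d$ when $y,d\in N(r)$ for some $r\in S$. I would choose $S$ at random with $\sum_{r\in S}b_r=K n$ for a slowly growing constant $K$, discarding rows of abnormally high degree; Lemma~\ref{lem:subquadraticmixing} controls heavy columns. A greedy matching in this union of cliques then leaves only $o(n)$ columns unmatched. The reason is that a column left over meets only few of the chosen neighborhoods, and the degree–moment control bounds how many columns that can happen to.

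Each matched column pair $(y_j,d_j)$ gets as anchor $A_j$ the row of $S$ that produced the match. By $C_4$-freeness this is its unique common row.

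\emph{Step 2: row pairs.} Next I pair rows outside $S$ that share a fixed column. Take a maximal matching in the graph on $[m]\setminus S$ whose edges join rows with a common column. The unmatched rows have pairwise disjoint neighborhoods. Hence at most $n/D$ of them have degree at least $D$. If $k$ rows have degree below $D$, then
\[
 F\le kD+(m-k)+n\sqrt{m-k},
\]
and $F\sim n\sqrt m$ forces $k=o(m)$ once $D\to\infty$ with $D=o(n/\sqrt m+\sqrt m)$. Since $n/D=o(m)$, this gives $R=m/2-o(m)$ and $C=n/2-o(n)$.

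\emph{Step 3: the filter loss (the main obstacle).} I expect bounding $\sum_i h(c_i)$ to be the crux. Choosing anchors naively, say the common row of an arbitrary column pair, gives
\[
 \sum_i h(c_i)\approx\sum_j\sum_{x\in N(A_j)}\frac{e_x}2\approx\frac{n^2}4,
\]
which is not $o(mn)$ when $m\asymp n$. Concentrating all anchors in $S$ fixes this. A column $x$ satisfies
\[
 h(x)\le\sum_{r\in S,\ x\in N(r)}\frac{b_r}{2}\approx\frac{n}{\sqrt m}\cdot\#\{r\in S:x\in N(r)\},
\]
so $h(x)$ is $O(Kn/\sqrt m)$ on average. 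The multiplicity of $x$ as some $c_i$ is at most $e_x/2$. Therefore
\[
 \sum_i h(c_i)\le\sum_x\frac{e_x}{2}\,h(x),
\]
and the random choice of $S$ makes this $O(K\,mn/\sqrt m)=o(mn)$ in expectation. The remaining work is to justify that expectation bound using the moment estimate $V=o(n^2)$ from Lemma~\ref{lem:subquadraticmixing}, and to check that the high-degree rows and columns discarded along the way cost only $o(mn)$. With that, Lemma~\ref{lem:bareanchors} yields $F+T\ge mn/4-o(mn)$, and hence $T\ge mn/4-o(mn)$.
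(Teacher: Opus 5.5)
\textbf{There is a genuine gap in Step 3.} Steps 1 and 2 are workable; Step 2 is essentially the paper's greedy pairing of rows with intersecting supports. But Step 3, which you correctly call the crux, fails as proposed.

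First, the final arithmetic is off. We have $\sum_x e_x=F\sim n\sqrt m$, not $m$. A typical row has $\sum_{x\in N(r)}e_x\approx(n/\sqrt m)\sqrt m=n$, so your own bound gives
\[
 \sum_x\frac{e_x}{2}\,h(x)\le\sum_{r\in S}\frac{b_r}{2}\sum_{x\in N(r)}\frac{e_x}{2}\approx\sum_{r\in S}\frac{b_rn}{4}=\frac{Kn^2}{4}.
\]
This is not $o(mn)$ when $m=O(n)$, for instance $m=n$, which lies inside the lemma's range.

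More fundamentally, $\sum_x(e_x/2)h(x)=\sum_j\sum_{x\in N(A_j)}e_x/2$ is exactly the naive quantity you rejected, for any choice of anchors. So concentrating the anchors in $S$ cannot help through this inequality. The lossy step is $\#\{i:c_i=x\}\le e_x/2$. These multiplicities sum to $R\le m/2$, while $\sum_x e_x/2=F/2$ is larger by a factor of order $n/\sqrt m$.

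What you need is $\sum_i h(c_i)\le R\max_i h(c_i)$, together with a \emph{uniform} bound $h(x)=o(n)$ on every column that actually occurs as some $c_i$. A random $S$ in which columns lie in about $K$ anchor rows on average does not supply this. A high-degree column can lie in many anchor rows and also be the common column of many row pairs.

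The missing idea is to use anchor rows with pairwise disjoint supports. The paper takes all rows through one column $x_0$ that minimizes base degree plus leave degree $\ell_0$. Writing $L$ for the number of uncovered column pairs, this sum is at most $(F+2L)/n=o(n)$. The argument then runs as follows.
\begin{itemize}
\item By $C_4$-freeness, the supports of these rows with $x_0$ removed are disjoint. They cover exactly the $n-1-\ell_0$ columns $y$ for which $\{x_0,y\}$ is covered.
\item Pairing columns inside each support therefore gives $C\ge n/2-o(n)$.
\item Every $x\ne x_0$ lies in at most one anchor row. Hence $h(x)\le b_{\max}/2$, where $b_{\max}\le n/\sqrt m+\sqrt V=o(n)$ and $V=\sum_r(b_r-n/\sqrt m)^2=o(n^2)$.
\item The only column with large $h$ is $x_0$. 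It is never a $c_i$, because every row through it is left bare.
\end{itemize}
Hence $\sum_i h(c_i)\le Rb_{\max}/2=o(mn)$, and your Step 2 finishes the argument.

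A minor point: $\sum_r\binom{b_r}{2}=M-o(n^2)$ follows from Cauchy--Schwarz, via $\sum_r\binom{b_r}{2}\ge(F^2/m-F)/2$. It does not follow from \eqref{eq:terminaldeficit}, whose left side is about $M$ in this range.
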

\begin{proof}
Let $L$ count the uncovered column pairs. By
\eqref{eq:classicalasymptotic}, the actual fixed-cell count satisfies
$F\sim n\sqrt m$.
For the row degrees $b_r$ and $a=n/\sqrt m$, pair counting gives
\[
 0\le2L\le n(n-1)-F^2/m+F=o(n^2),\qquad
 V:=\sum_r(b_r-a)^2\le2n^2-n+F-2nF/\sqrt m=o(n^2).
\]
Hence $b_{\max}\le a+\sqrt V=o(n)$, and only $o(m)$ rows have
degree less than $a/2$, since their number is at most $4V/a^2$.

Choose a column $x_0$ minimizing its base degree plus its degree in
the leave graph. Their sum is at most $(F+2L)/n=o(n)$; write the
two degrees as $a_0,\ell_0$. Leave all $a_0$ rows through $x_0$
bare. Their fixed supports with $x_0$ removed are disjoint and cover
exactly $n-1-\ell_0$ columns. Pair as many columns as possible within each support, using
its row as the anchor. This gives
\[
 C\ge(n-1-\ell_0-a_0)/2=n/2-o(n).
\]
For any $x\ne x_0$, at most one anchor row contains $x$, and that row
supplies at most $b_{\max}/2$ column pairs. Thus $h(x)=o(n)$ uniformly
away from $x_0$, with repetitions counted.

Leave the rows of degree below $a/2$ bare too, and greedily pair
remaining rows whose fixed supports intersect. Unmatched supports
are disjoint and each has size at least $a/2$, so there are at most
$2n/a=2\sqrt m=o(m)$ unmatched rows. Therefore $R=m/2-o(m)$.
No paired row contains $x_0$, so every shared column $c_i$ differs
from $x_0$. Lemma~\ref{lem:bareanchors} now gives
$F+T\ge RC-Rb_{\max}/2=mn/4-o(mn)$.
Finally $F=o(mn)$. All estimates depend only on the dimensions, $F$,
and $C_4$-freeness, so the conclusion is uniform as $F/z(m,n)\to1$.
\end{proof}

\begin{lemma}\label{lem:profilequarter}
Fix $k\ge2$. In the range of Lemma~\ref{lem:classicalprofile}, every
$C_4$-free base, with deficit $\Delta=z(m,n)-F$, admits
\[
 F+T\ge mn/4-O_k(n^2+n\Delta).
\]
\end{lemma}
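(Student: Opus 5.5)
We plan to rerun the proof of Lemma~\ref{lem:universalquarter}, replacing each $o(\cdot)$ estimate by an explicit bound in terms of $n^2$ and $\Delta$, with the range of Lemma~\ref{lem:classicalprofile} supplying the needed facts about $z(m,n)$. We assume that range is $m\ge n^2/k$ or similar, so that $z(m,n)=m+M-O_k(n^2)$ or comparable, and that $a=F/m$ is bounded by $O_k(1)$. The plan has three steps: control the leave $L$ and the degree variance $V$ linearly in the deficit, build the anchored column pairs, and build the row pairs.

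\emph{Controlling the base.} By \eqref{eq:terminaldeficit},
\[
 L+\sum_r(b_r-1)(b_r-2)/2=m+M-F=(m+M-z(m,n))+\Delta .
\]
Inside the stated range we expect the profile lemma to bound the first bracket by $O_k(n^2)$. Then $L=O_k(n^2+\Delta)$, and the number of rows with $b_r\ge3$, together with their excess degrees, is $O_k(n^2+\Delta)$. This is the linear control that replaces the $o(n^2)$ statements.

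\emph{Anchored column pairs.} Choose $x_0$ minimizing base degree plus leave degree. This sum is at most $(F+2L)/n=O_k(m/n+n+\Delta/n)$. As in Lemma~\ref{lem:universalquarter}, the $a_0$ rows through $x_0$ are left bare. Their supports then give $C\ge n/2-O_k(a_0+\ell_0)$ column pairs, each anchored at a row through $x_0$. For $x\ne x_0$, at most one anchor row contains $x$, so $h(x)\le b_{\max}/2$. Here $b_{\max}$ can be large only on the exceptional rows. We therefore do not pair rows of large degree: such rows are left bare and excluded from the row pairs, at a cost of $O_k(n^2+\Delta)$ rows times $n$.

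\emph{Row pairs.} We greedily pair non-bare rows that share a column $c_i\ne x_0$. Every leftover row has a support of size at least $1$, and the leftover supports are pairwise disjoint, so at most $n$ rows remain unmatched. Rows with $b_r\le1$ need care, since a singleton row cannot share a column after the padding. Their number is $m-O(M)$ minus whatever the profile determines, so we pair singleton rows lying in a common column; at most $n$ of them stay unmatched. This gives $R\ge m/2-O_k(n+a_0+(n^2+\Delta)/\!\cdot)$. Since $a_0$ contributes only $O(m/n)$ rows, each costing at most $n$, the loss from these rows is $O(m)$, which we absorb into $O(n^2)$ when $m=O_k(n^2)$. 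The bare-anchors Lemma~\ref{lem:bareanchors} then gives
\[
 F+T\ge RC-\sum_i h(c_i).
\]
The main obstacle is bounding $\sum_i h(c_i)$ by $O_k(n^2+n\Delta)$ rather than $Rb_{\max}$. We would first try the bound $\sum_i h(c_i)\le\sum_x h(x)\cdot(\text{multiplicity of }x\text{ as some }c_i)$. Each column serves as $c_i$ for $O(\text{degree})$ row pairs, and the anchor rows have bounded degree outside the exceptional set, so the sum is $O_k(\sum_x e_x)$ over the relevant columns. This is $O_k(F)=O_k(m)$ plus the exceptional contribution $O(n\Delta)$.
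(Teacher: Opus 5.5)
Your plan has a genuine gap: it is calibrated to the wrong regime, and the single anchor column cannot give $C\approx n/2$ here.

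\textbf{The regime and the degree control.} The range of Lemma~\ref{lem:classicalprofile} is $M/\binom{k+1}2\le m\le M/\binom k2$. So $m\asymp_k n^2$, $m\le M$, and almost all rows have degree $k$ or $k+1$. There are only $O_k(n+\Delta)$ singleton rows, not $m-O(M)$.

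Your statement $z(m,n)=m+M-O_k(n^2)$ is true but vacuous. The terminal identity \eqref{eq:terminaldeficit} penalizes degrees other than $1,2$, and here $m+M-z(m,n)$ is in general of order $n^2$; for $k\ge3$ it is at least $(1-2/k)M-O_k(n)$. Your resulting bounds on $L$ and on the rows of degree at least three therefore say nothing.

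In most of the range a positive fraction of the rows have degree at least three, and for $k\ge3$ almost all do. Leaving them bare at cost $n$ each loses $\Theta_k(mn)$, far more than the allowed $O_k(n^2+n\Delta)$. The usable control is the $k$-penalty identity
\[
 D=M-kF+\tfrac{k(k+1)}{2}m=L+\sum_r\tfrac{(b_r-k)(b_r-k-1)}{2}=O_k(n+\Delta).
\]
It makes the leave, the number of rows outside degrees $k,k+1$, and their total squared degree all linear in $n+\Delta$.

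\textbf{The single anchor column.} Your bound $C\ge n/2-O_k(a_0+\ell_0)$ is useless here, because $a_0+\ell_0=\Omega_k(n)$ for every column when $\Delta=O(n)$. The reason is that the supports through a column $x$ are disjoint, cover all but $\ell_x$ other columns, and rows of unbounded degree have total squared degree $O_k(n)$. Multiplying a $\Theta_k(n)$ shortfall in $C$ by $R\approx m/2$ loses $\Theta_k(mn)$. Your remark that the $a_0$ bare rows cost only $O(m)$ concerns $R$, not this shortfall in $C$.

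The loss is genuine, since a support of size $b_r-1$ yields only $\lfloor(b_r-1)/2\rfloor$ column pairs. For $k=2$ and $m=M$, the base of all pair rows is maximum and gives $C=0$ for every choice of $x_0$. For $k=3$ at $m=M/6$, a $2$-$(n,4,1)$ design (when it exists) gives $C=(n-1)/3$.

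\textbf{The missing idea.} You need to draw the anchors from many different rows. This generalizes the terminal case in the proof of Theorem~\ref{thm:everymaximum}, rather than Lemma~\ref{lem:universalquarter}.
\begin{itemize}
\item Color each column pair lying in a regular row's support by that row. These are edge-disjoint cliques of order at most $k+1$, leaving $O_k(n+\Delta)$ pairs uncolored.
\item Take a uniformly random perfect matching on $2\lfloor n/2\rfloor$ columns. Keep its colored edges, with their coloring rows as anchors. This gives $C=n/2-O_k(1+\Delta/n)$.
\item Now $h(x)=Z_x$, the number of matching edges in the union $G_x$ of the cliques through $x$. This is not bounded for an arbitrary matching. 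However, $e(G_x)\le(k+1)(n-1)/2$ gives $\mathbb E Z_x=O_k(1)$.
\item Fix the row pairing and the weights $\lambda_x=\#\{i:c_i=x\}$ before the random choice. Then $\mathbb E\sum_x\lambda_xZ_x=O_k(m)$.
\item Discard the at most $n/2$ row pairs meeting an anchor row, since Lemma~\ref{lem:bareanchors} requires anchors outside the paired rows. The product loss is then $O_k(n^2+n\Delta)$.
\end{itemize}
Controlling $\sum_i h(c_i)$ in this many-anchor setting is the real difficulty; in your single-column setting that sum is easy, but the construction fails earlier, at $C$.
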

\begin{proof}
By Lemma~\ref{lem:classicalprofile}, the pair-count deficit
$D=M-kF+k(k+1)m/2$ is $O_k(n+\Delta)$.
Its leave-and-penalty identity implies the following estimates.
All but $O_k(n+\Delta)$ rows have degree $k$ or $k+1$, and the
exceptional rows have total squared degree $O_k(n+\Delta)$, since
$b^2\le(k+2)^2(b-k)(b-k-1)/2$ off those degrees. Color each column pair in
a regular row's support by that row. These cliques have order at most
$K=k+1$, are edge-disjoint, and leave $O_k(n+\Delta)$ pairs uncolored.

First fix a maximal pairing of regular rows with intersecting supports.
Unmatched supports are disjoint, so at most $n$ remain. There are
$R_0=m/2-O_k(n+\Delta)$ pairs. For their common columns $c_i$, put
$\lambda_x=\#\{i:c_i=x\}$; then $\sum_x\lambda_x=R_0\le m/2$.

On any $N=2\lfloor n/2\rfloor$ columns take a uniformly random perfect
matching. Let $U$ count its uncolored edges. For every original column $x$,
including the omitted column if $n$ is odd, let $G_x$ be the union of
regular-row cliques whose rows contain $x$, restricted to these $N$ vertices,
and let $Z_x$ count its matching edges. By $C_4$-freeness,
\[
 e(G_x)\le\frac K2\sum_{r\text{ regular}:\,B_{r,x}=1}(b_r-1)
 \le K(n-1)/2.
\]
Each edge belongs to the matching with probability $1/(N-1)$. Since the
weights $\lambda_x$ were fixed before this choice,
\[
 \mathbb E U=O_k(1+\Delta/n),\qquad
 \mathbb E\sum_x\lambda_xZ_x=O_k(m).
\]
Choose a matching for which
$U/(1+\Delta/n)+m^{-1}\sum_x\lambda_xZ_x=O_k(1)$.
Remove its uncolored edges, leaving
$C=n/2-O_k(1+\Delta/n)$ column pairs, and use their color rows as anchors.
The anchor of each pair is unique among all fixed rows by $C_4$-freeness.
Discard every previously chosen row pair touching an anchor row. At most
$n/2$ row pairs are discarded, so $R=m/2-O_k(n+\Delta)$ remain.

The indexed anchor count is $h(x)=Z_x$, so
$\sum_{i\text{ retained}}h(c_i)\le\sum_x\lambda_xZ_x=O_k(m)$.
Lemma~\ref{lem:bareanchors} gives $F+T\ge RC-O_k(m)$.
Since $R\le m/2$, $C\le n/2$ and $m\asymp_k n^2$, the product loss is at most
$(m/2)(n/2-C)+(n/2)(m/2-R)=O_k(n^2+n\Delta)$.
\end{proof}

\begin{proof}[Proof of Theorem~\ref{thm:everymaximum}]
We prove the stronger assertion. Write $\Delta=z(m,n)-F$ and suppose
$\Delta=o(n\min\{n,\sqrt m\})$ along a sequence $m\ge n\to\infty$.
First suppose $m\ge M$. Then $\Delta=m+M-F=o(n^2)$,
and \eqref{eq:terminaldeficit} applies.
Rows outside degrees one and two number at most $\Delta$ and have
total squared degree at most $9\Delta$. The degree-two rows therefore
cover $M-o(n^2)$ column pairs. A uniform matching on
$2\lfloor n/2\rfloor$ columns has $o(n)$ expected uncovered edges.
Choose one with at most this expectation.
Keep its covered edges and use their degree-two rows as bare anchors.
Then $C=n/2-o(n)$ and $h(x)\le1$ for every original column.
Leave the exceptional rows bare too, and greedily pair the other rows
with intersecting supports. At most $n$ nonempty supports remain
unmatched, so $R=m/2-o(m)$, using $m\ge M$.
Lemma~\ref{lem:bareanchors} gives $F+T\ge mn/4-o(mn)$.
When $\Delta=0$, all matching edges are covered:
$C=\lfloor n/2\rfloor$, $R=m/2-O(n)$, and $h\le1$.
Since $n^2+F=O(m)$, this gives $T\ge mn/4-O(m)$.

If the conclusion failed on a subsequence, the preceding case excludes $m\ge M$.
Pass to a subsequence with $m/n^2\to c\in[0,1/2]$. If $c=0$,
then $\Delta=o(n\sqrt m)$ and Lemma~\ref{lem:universalquarter} applies.
If $c>0$, then $\Delta=o(n^2)$ and only finitely
many classical-profile indices $k\ge2$ occur; pass to a fixed one
and use Lemma~\ref{lem:profilequarter}. Both contradict a fixed
positive density loss. Finally
$F/(mn)\le1/n+1/\sqrt m\to0$ uniformly, proving the claim for $T$.
Uniformity follows by choosing a failing sequence with $n\ge j$ and
$\Delta\le n\min\{n,\sqrt m\}/j$. The case $\Delta=0$ gives the
every-maximum conclusion, and transposition covers the other orientation.
The quantitative bound for $m\ge c_0n^2$ follows from the terminal case and,
below $M$, Lemma~\ref{lem:profilequarter} at $\Delta=0$ over finitely many
profile indices, using $n^2+F=O_{c_0}(m)$.

For sharpness, let $n\to\infty$ and $m/n^2\to\infty$. Take one
row for each column pair, and $S=m-M$ singleton rows all fixed at
column $1$. This base is $C_4$-free with $F=m+M$, hence maximum by
\eqref{eq:terminaldeficit}. At most $Mn=O(n^3)$ selected pairs meet
a pair row, by (S). Let $T'$ count the remaining pairs, and put
$E'=2T'$. These pairs use only the $S$ singleton rows and the $n-1$
nonanchor columns; let $e_c$ be their column degrees.
For each retained pair with columns $c,d$, at most two rows are
selected in both columns: any such row outside the pair's endpoint
rows would give a (C3) witness in fixed column $1$.
Thus $e_c+e_d\le S+2$, also when the columns repeat. Summing over
pairs and using Cauchy--Schwarz gives
\[
 \frac{(E')^2}{n-1}\le\sum_c e_c^2\le\frac{E'(S+2)}2.
\]
Consequently $T'\le(S+2)(n-1)/4$ (trivially if $E'=0$), and
$T^*(B)\le mn/4+O(n^3)=(1/4+o(1))mn$.
Together with the lower guarantee already proved, $F=o(mn)$, and
Theorem~\ref{thm:sharpasymptotic}, this proves sharpness and the stated
contrast with $\zl(m,n)=(1/3+o(1))mn$.
\end{proof}

%% file: universal_third.tex
\section{The one-third window for every maximum base}
The quarter guarantee is sharp in the worst case, but maximum bases in
the window $\binom n2\le m\le n^2$ all admit $mn/3-O(m)$ selected pairs.
We prove this by distributing singleton rows across three types and
matching cells between types. At each fixed limiting ratio above one,
a canonical base instead has a strict density deficit. The one-third
limit still holds when $m/n^2\to1$ from either side.

\begin{theorem}\label{thm:universalpairwindow}
Every maximum $C_4$-free $m\times n$ base $B$ with $m\ge M$ satisfies
\[
 T^*(B)\ge\frac n3\min\{m,n^2\}-20m.
\]
There is an absolute $\kappa>0$ such that, for $m\ge n^2$, the maximum
base $B_{\rm can}$ consisting of one row for every column pair and
$m-M$ singleton rows all fixed at column $1$ satisfies
\[
 T^*(B_{\rm can})\le\frac{mn}{3}-\kappa n(m-n^2)+4m.
\]
\end{theorem}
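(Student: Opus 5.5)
The proof has two halves: a lower-bound construction valid for every maximum base, and an upper bound for the single canonical base. Both start from the rigidity noted after \eqref{eq:terminaldeficit}. When $m\ge M$, a maximum base consists of exactly one degree-two row for each column pair together with $m-M$ singleton rows, and the column of each singleton is arbitrary.

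\emph{Lower bound.} I would use the three-block template of Theorem~\ref{thm:unrestricted} and place the fixed cells \emph{inside} its off-diagonal blocks. Choose column classes $C_1,C_2,C_3$. Put each used row into a class $R_h$ such that none of its fixed cells lies in $C_h$:
\begin{itemize}
\item a pair row meeting two different column classes is forced into the third class;
\item a pair row inside one column class, or a singleton row, has two admissible classes.
\end{itemize}
Surplus rows are left bare. Bare rows carry at most two fixed cells, so a bare singleton row can never be a (C3) witness row, since it has only one occupied cell.

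The occupied set is then the off-diagonal template on the used rows, together with the bare rows. Within each reciprocal bucket pair $R_i\times C_j$, $R_j\times C_i$, match the free cells bijectively, taking the smaller count. The verification of (S), (C2) and (C3) in Theorem~\ref{thm:unrestricted} never used the labels of the pairs. A witness $(x,y)$ must have $x$ a common row of the two paired columns and $y\in C_k$. Such an $x$ is either in $R_k$, where $R_k\times C_k$ is empty, or is the unique bare pair row of that column pair, whose cells avoid $C_k$. So this verification carries over.

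Each row has at most two fixed cells, so the bucket imbalances cost $O(m)$. This is absorbed into the $20m$ slack. The remaining task is to choose class sizes with
\[
 \sum_{i<j}\min\{|R_i||C_j|,|R_j||C_i|\}\ge \tfrac n3\min\{m,n^2\}-O(m).
\]

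\emph{Main obstacle.} The difficulty is the worst case in which all singleton columns coincide, or more generally concentrate, so that the singletons cannot enter one class. The pair rows crossing two classes give about $n^2/9$ forced rows to each class. Pair rows inside a class give about $n^2/18$ flexible rows per class. This means an $R_h$ blocked from singletons can reach only about $2n^2/9$ rows.

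I would first try unbalanced column classes, with the heavy column or columns in a smaller class, and optimize the sizes $(|R_i|,|C_i|)$ by a short convex computation. The aim is an objective of at least $\tfrac n3\min\{m,n^2\}$, using only about $\min\{m,n^2\}$ rows and leaving the rest bare. If the singleton columns are spread out, a greedy assignment already balances the classes. This size optimization is the step I expect to need the most care.

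\emph{Upper bound for $B_{\rm can}$.} I would follow the sharpness argument in Theorem~\ref{thm:everymaximum}:
\begin{enumerate}
\item Discard the $O(n^3)$ selected pairs meeting pair rows. This loss is at most $4m$ in total once $m\ge n^2$ is used.
\item For a retained pair with columns $c,d\ne1$, a singleton row selected in both $c$ and $d$ is a (C3) witness with $y=1$ whenever both endpoint rows are fixed at column $1$. This always holds for singleton endpoint rows.
\item Hence each retained pair has an empty rectangle with one side formed by the singleton rows selected in both of its columns. This gives a light-axis condition on about $m-M$ rows, in the spirit of Lemma~\ref{lem:lightmatrix}.
\end{enumerate}
Combining this with the degree-moment bound \eqref{eq:degree-moment} on the rows used, I expect a quadratic inequality showing that the rows beyond roughly $n^2$ contribute density strictly below $1/3$. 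That strict drop is the source of the term $\kappa n(m-n^2)$.
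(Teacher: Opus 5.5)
\textbf{Lower bound.} The size optimization you postpone has no solution in your framework.

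In your setup every column lies in a class, and a used row avoids its own column class. So a singleton fixed at $c\in C_h$ can never enter $R_h$. Take $B_{\rm can}$ at $m=n^2$: about $n^2/2$ singletons sit at column $1\in C_1$. Then $R_1$ consists only of pair rows inside $[n]\setminus C_1$, so $|R_1|\le\binom{n-|C_1|}{2}$. On the other hand, with $\rho_i=|R_i|$ and $\gamma_i=|C_i|$,
\[
 \sum_{i<j}\min\{\rho_i\gamma_j,\rho_j\gamma_i\}
 \le\sum_{i<j}\sqrt{\rho_i\gamma_i\rho_j\gamma_j}
 \le\frac13\Bigl(\sum_i\sqrt{\rho_i\gamma_i}\Bigr)^2
 \le\frac n3\sum_i\rho_i ,
\]
with near-equality only near $\gamma_i=n/3$ and $\rho_i=\frac13\sum_j\rho_j$.

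Reaching $n^3/3-O(n^2)$ therefore forces $|R_1|\approx n^2/3$. Near balance, however, only about $2n^2/9$ rows are admissible for $R_1$. By compactness every choice of sizes loses $c\,n^3$ for some absolute $c>0$, so unbalancing cannot help.

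The missing idea is to take the heavy columns out of the classes altogether. The paper removes the at most two columns carrying more than $S/3$ singletons, plus at most two more for divisibility by three. It keeps these columns and the pair rows through them bare, and lets singletons anchored there join any type. Distributing the remaining columns cyclically by decreasing singleton weight keeps each color's weight at most $5S/9$. Internal pair rows are split so each type has exactly $P_0=\binom N2/3$ pair rows, and Hall's theorem then balances all three types at about $m/3$ rows.

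This breaks your (C3) check. Two singletons with the same removed anchor share that column, and a third such singleton in the third class is a witness. The paper therefore matches fibers so that no selected pair has two singleton endpoints. The cap $h_0\le2(P_0-n)$ guarantees enough pair-row cells in each pool for this.

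\textbf{Upper bound.} Your Step~1 fails quantitatively. The pairs meeting pair rows are bounded only by about $Mn\approx n^3/2$, and near-optimal augmentations do use $\Theta(n^3)$ of them. That is $\Theta(mn)$, not $O(m)$, when $m=O(n^2)$.

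Even with the sharp estimate $T'\le(S+2)(n-1)/4$ for the remaining singleton pairs, this route gives only
\[
 T^*\le Mn+(m-M)n/4+O(n^2).
\]
This drops below $mn/3$ only for $m\ge9M$. It says nothing at $m=n^2$, where $mn/3+O(m)$ is claimed, nor about the linear term $\kappa n(m-n^2)$ for $n^2\le m\le4n^2$. The promised quadratic inequality is not derived. Moment or light-axis bounds on the singleton rows alone cannot see the relevant structure.

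The paper keeps all pairs and applies Lemma~\ref{lem:cleanholes} to the full configuration. This gives three-block partitions with exact good diagonal holes, and almost all pairs are reciprocal. Your Step~2 observation is then used inside that structure. For a retained reciprocal pair with two singleton endpoints, some good singleton row of the third class (there are at least $m/10$ of them) occupies both selected columns. Together with column $1$, this forms a (C3) witness. Hence every retained pair has a pair-row endpoint.

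The per-class count $L_{ij}+L_{ji}\le Q_{ij}$, summed with weights $(1-2p_k)/2$, shows that singletons form at most $m/2+O(\delta m)$ rows. Thus $m-n^2\le m-2M=O(\delta m)$. This linear stability estimate rearranges into the stated bound with an absolute $\kappa$.
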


Thus $T^*(B)=mn/3+O(m)$ uniformly over all maximum bases on
$M\le m\le n^2$. More generally, every such base has limiting optimal
selected-pair density $1/3$ whenever $n\to\infty$, $m\ge M$, and
$m/n^2\to c\in[1/2,1]$,
including approach to $c=1$ from either side. For fixed $c>1$, the
canonical bases instead have upper limiting density at most
$1/3-\kappa(1-1/c)$. This is an obstruction for a specified maximum base.
Adding the $F=O(m)$ fixed cells does not change these density conclusions.

\begin{proof}[Proof of the lower bound]
By the zero-deficit case of \eqref{eq:terminaldeficit}, $B$ has one degree-two
$P$-row per column pair and $S=m-M$ singleton $S$-rows. For $n\le4$ the
claimed lower bound is negative, so assume $n\ge5$.

Let $g_c$ count the singleton rows at column $c$. Remove
$H=\{c:g_c>S/3\}$, which has size at most two, and the minimum number
of further columns needed so that the remaining set $K$ has size $N=3s$.
At most two further columns are needed. Write $R$ for
all removed columns and $r=|R|\le4$. Keep $R$ bare and leave every
$P$-row meeting $R$ bare. There are
$\ell=M-\binom N2\le rn$ such rows; all other $P$-rows have their
full fixed support in $K$.

Sort the singleton weights on $K$ decreasingly and assign them cyclically
to three classes $A,B,C$, each of size $s$. Their total weights
$W_A,W_B,W_C$ differ by at most the largest individual weight
$w\le S/3$: the extreme difference is a sum of disjoint segments of
the successive downward differences of the sorted list. Since their
sum is at most $S$, each $W_i\le S/3+2w/3\le5S/9$.

Give each differently colored $P$-row its type $AB,BC$, or $CA$.
There are $s^2$ of each. Put $q=\binom s2$. Split the $A$-internal
rows into $\lfloor q/2\rfloor$ of type $AB$ and the rest of type $CA$,
and cyclically for $B,C$. Every type has exactly
\[
 P_0=s^2+q=\binom N2/3
\]
$P$-rows. Choose balanced integer capacities $q_t$ for the singleton
rows, summing to $S$, each equal to $\lfloor S/3\rfloor$ or
$\lceil S/3\rceil$. A singleton anchored in an old color may use either
type containing that color; one anchored in $R$ may use any type.
An integral assignment filling these capacities exists by Hall's theorem:
for a single old color its available capacity is $S-q_{\rm opposite}$,
and
\[
 W_i\le\lfloor5S/9\rfloor\le\lfloor2S/3\rfloor
      =S-\lceil S/3\rceil\le S-q_{\rm opposite}.
\]
Any subset involving two old colors, or a removed-anchor row, has all
three types available. Thus the active row counts $R_t=P_0+q_t$ satisfy
$\min_tR_t\ge(m-\ell)/3-2/3\ge m/3-2n$.

A fiber consists of one old column on all rows of one of its two types.
It contains at least $P_0-n$ nonfixed $P$-cells. Put
\[
 h_0=\max\{0,\min\{\min_tR_t,\,2(P_0-n)\}\}.
\]
If $h_0=0$, select nothing. Otherwise, label the three color classes
by $i\in[s]$ and match the fibers
\[
 (AB,A_i)\leftrightarrow(BC,C_i),\quad
 (AB,B_i)\leftrightarrow(CA,C_i),\quad
 (CA,A_i)\leftrightarrow(BC,B_i).
\]
For each match take $h_i=\min\{h_0,a_i,a_i'\}$ cells from each side,
where $a_i,a_i'$ are its available nonfixed-cell counts. Each pool can
contain at least $\lceil h_i/2\rceil$ $P$-cells, since
$h_i\le2(P_0-n)$. Match the two pools avoiding $S$--$S$ pairs: their
singleton counts sum to at most $h_i$, so match each side's singleton
cells to the other side's $P$-cells, then match the remaining $P$-cells.

Every selected cell is used once, and both opposite cells lie outside
the endpoint row types, proving (S) and (C2). For a pair between
$(AB,A_i)$ and $(BC,C_i)$, common occupied old columns lie in $B$.
No common column lies in $R$, since one endpoint is a $P$-row with
no occupied cell in $R$. An active row occupying both $A_i,C_i$ must
have type $CA$ and therefore misses every old $B$-column. A bare
$P$-row has degree two and cannot occupy the three distinct columns
of a (C3) witness. The cyclic cases are identical. This verifies (C3)
in full occupancy, including singleton rows with fixed anchors in $R$.

Each fiber occurs once. Since $R_t\ge h_0$, the loss $h_0-h_i$ is at
most the sum of its two fixed-cell counts. Their total is at most
$2\binom N2+S\le m+M\le2m$. Thus, also when $h_0=0$,
\[
 T\ge Nh_0-2m.
\]
Writing $a=\min\{m,n^2\}/3$, the estimates above and $r\le4$ give
$h_0\ge a-5n$. Consequently
\[
 T\ge(n-r)(a-5n)-2m\ge na-\frac{19}{3}n^2-2m\ge na-20m,
\]
where $n^2\le5m/2$ for $n\ge5$, $m\ge M$. This proves the lower bound.
Theorem~\ref{thm:paireddensity}, after deleting fixed cells, supplies
$T\le mn/3+m/3+3n/4$, proving the stated one-third consequences.
\end{proof}

\begin{proof}[Proof of the canonical upper bound]
Fix any augmentation above $B_{\rm can}$, and put
\[
 u=\frac23-\frac{2T}{mn},\qquad
 \delta=u_++\frac1m+\frac1n,\qquad F=m+M\le3m/2.
\]
The parameter of Lemma~\ref{lem:cleanholes} is
$\eta=\delta+F/(mn)\le5\delta/2$. Fix a sufficiently small absolute
$\tau>0$. For sufficiently small $\delta$, that lemma gives partitions
$R_i,C_i$ with masses $r_i,p_i=1/3+O(\sqrt\delta)$, global full-occupancy
edit error $O(\delta mn)$, $O(\delta)$ bad-line fractions, and exact
good diagonal holes. We may assume $1/4\le r_i,p_i\le5/12$.
Discard for counting nonreciprocal pairs and pairs with a bad endpoint
row or column. By (S), their total number is $O(\delta mn)$.

Let $G_i$ be the good singleton rows in $R_i$, and $g_i=|G_i|$.
A good $P$-row with good fixed columns avoids $C_i$, while each bad
column meets exactly $n-1$ fixed $P$-rows. Thus
\[
 g_i\ge r_im-\tfrac12(1-p_i)^2n^2-O(\delta m)
      \ge m/3-2n^2/9-O(\sqrt\delta\,m)\ge m/10.
\]
The fixed anchor column $1$ need not be good. A retained reciprocal
pair of singleton endpoints in $R_i\times C_j$ and $R_j\times C_i$
would violate (C3): in the third class $G_k$, its two good selected
columns miss at most $2\tau m<m/10$ rows. A remaining row occupies
both columns and column $1$, as do the endpoint rows at column $1$.
The witness row is outside the endpoint classes, and the selected
columns differ from $1$. Hence no retained pair has two singleton endpoints.

Let $Q_{ij}=Q_{ji}$ count retained reciprocal pairs, and let $L_{ij}$
count their singleton endpoints in $G_i\times C_j$. The \emph{global}
edit bound, after removing $F=O(\delta mn)$ fixed cells and
$O(\delta mn)$ exceptional endpoints, gives
\[
 Q_{ij}=r_ip_jmn+O(\delta mn),\qquad
 L_{ij}=g_ip_jn+O(\delta mn)\quad(i\ne j).
\]
These aggregate errors use the global edit estimate, not the individual
$\tau$ good-line error. Symmetry of $Q_{ij}$ gives
$r_ip_j-r_jp_i=O(\delta)$; summing over $j$ yields
$r_i-p_i=O(\delta)$. Since $L_{ij}+L_{ji}\le Q_{ij}$, putting
$\sigma_i=g_i/(mp_i)$ gives
\[
 \sigma_i+\sigma_j\le r_i/p_i+O(\delta)\le1+O(\delta).
\]
For $\{i,j,k\}=\{1,2,3\}$, the weights
$\lambda_{ij}=(1-2p_k)/2$ are nonnegative and satisfy
$\lambda_{ij}+\lambda_{ik}=p_i$ and $\sum_{i<j}\lambda_{ij}=1/2$.
The weighted sum of these three inequalities therefore gives
\[
 \frac{\sum_i g_i}{m}=\sum_i p_i\sigma_i\le\frac12+O(\delta).
\]
Only $O(\delta m)$ singleton rows are bad. Since their total is $m-M$,
\[
 q:=\frac{m-n^2}{m}\le\frac{m-2M}{m}=O(\delta).
\]
For larger $\delta$ the bound $q\le K\delta$ still holds with an
absolute $K$, because $0\le q\le1$.

Finally put $\gamma=1/3-T/(mn)=u/2$ and $\zeta=u+4/m+4/n$.
Theorem~\ref{thm:paireddensity} gives
$u\ge-2/(3n)-3/(2m)$. Hence $\zeta\ge\delta$: this is immediate
for $u\ge0$, while for $u<0$,
$\zeta-\delta=u+3/m+3/n\ge3/(2m)+7/(3n)>0$.
Since $m\ge n^2$,
\[
 q\le K\delta\le K\zeta\le2K\gamma+8K/n.
\]
Rearranging proves the upper bound with $\kappa=1/(2K)>0$, including
when the finite signed deficit $\gamma$ is negative.
\end{proof}

%% file: threshold.tex
\section{The limited parameter's threshold}\label{sec:threshold}
The unrestricted density is always asymptotically $1/3$. For the limited
parameter, this density first becomes possible at $m/n^2=1/12$.
The next statement also measures the loss immediately below that threshold.

\begin{theorem}\label{thm:sharpasymptotic}
There are absolute constants $\varepsilon,c,C,n_0>0$ such that, for all
integers $n\ge n_0$ and $m\ge(1/12-\varepsilon)n^2$,
\[
 c n\left(\frac{n^2}{12}-m\right)_+-Cm
 \le\frac{mn}{3}-\zl(m,n)
 \le C\left[m+n\left(\frac{n^2}{12}-m\right)_+\right].
\]
Here $x_+=\max\{x,0\}$. In particular, for every fixed $D>0$,
$\zl(m,n)=mn/3+O_D(m)$ uniformly for sufficiently large $n$ and
$m\ge n^2/12-Dn$, and $\zl(\binom n2,n)=n^3/6+O(n^2)$.
\end{theorem}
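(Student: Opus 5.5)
The two inequalities need different tools: the upper bound on $mn/3-\zl$ is a construction of one good maximum base, and the lower bound on the deficit is a stability argument.

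\emph{Upper bound on the deficit (construction).} For $m\ge n^2/12$ I would exhibit one maximum base admitting $mn/3-O(m)$ pairs, adapting the three-type scheme from the proof of Theorem~\ref{thm:universalpairwindow}. Take a maximum base whose rows fall into three types $AB,BC,CA$, where a type-$AB$ row has fixed support inside $C_A\cup C_B$. The diagonal block $R_{AB}\times C_C$ is then free of fixed cells and can act as the clean hole. Fibers are matched across types exactly as in that proof, so (S), (C2) and (C3) are checked in the same way.

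The size constraint enters here. The fixed supports of rows of type $AB$ use column pairs inside $C_A\cup C_B$, and these must be disjoint across rows. A row of degree $b$ consumes $\binom b2$ such pairs. A maximum base in the regime $m\approx n^2/12$ has row degree $k\approx3$, since $M\approx km\cdot(k-1)/2$ gives $k(k-1)/2\approx6$. The pairs available inside the classes are $\binom{2n/3}{2}$ per type, and the three types share the within-class pairs. So a type assignment exists precisely when the design-decomposition theorems allow the edges of $K_n$ to be packed as triangles (or $k$-cliques) that each avoid one class. I expect this to be what produces the constant $1/12$, and it is the content of Appendix~\ref{sec:constructions}.

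Below the threshold, with $m=n^2/12-x$, I would leave $O(x)$ rows' worth of supports forced to meet all three classes. Rather than making these rows bare, I would enlarge the design so that only $O(nx)$ cells are lost, which gives the term $Cn(n^2/12-m)_+$. The residue classes of $n$, and the transition when $m$ crosses the profile indices $k$, need separate case handling. I would cite Lemma~\ref{lem:classicalprofile} for the structure of maximum bases and the known decomposition theorems for existence.

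\emph{Lower bound on the deficit (stability).} Suppose $T\ge mn/3-\gamma mn$ with $\gamma$ small. Apply Lemma~\ref{lem:cleanholes}, with $F=O(m)$ since $m\asymp n^2$, to get balanced partitions, good lines, and exact good-by-good diagonal holes. Then, as in the proof of the second-order theorem, a good fixed row in $R_j$ has no fixed cell in $C_j$, so all of its column pairs lie within $C_i\cup C_h$. For a maximum base the leave-and-penalty identity of Lemma~\ref{lem:classicalprofile} forces almost every row to cover $\binom k2\approx6$ column pairs, and the column pairs must be disjoint across rows.

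A row whose support avoids $C_j$ but meets both other classes wastes nothing. The key counting point is that every pair lying within a single class $C_i$ must be covered by a row that avoids at least one other class. Counting within-class pairs ($3\binom{n/3}{2}\approx n^2/6$) against the mixed pairs ($n^2/3$), and comparing with how many mixed pairs a degree-$k$ row avoiding one class can absorb, gives the inequality $m\ge n^2/12-O(\gamma n^2+\text{bad})$. Bad rows and columns number $O(\gamma)m$ and $O(\gamma)n$ by the linear stability theorem, Theorem~\ref{thm:phasestability}. Rearranging yields $\gamma mn\ge cn(n^2/12-m)-Cm$.

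The main obstacle is keeping this linear in $\gamma$. Bad columns can each carry about $m\cdot k/n$ fixed cells. I would handle them as in Theorem~\ref{thm:universalpairwindow}'s canonical bound, using the \emph{global} edit estimate $O(\delta mn)$ rather than per-line errors, and I would absorb the range $\gamma\ge$ const trivially by enlarging $C$.
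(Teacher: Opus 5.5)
\textbf{The gap: the construction never certifies that the base is maximum.} The construction half of your proposal never shows that the typed base is \emph{maximum}. This matters because $\zl$ admits only bases with $|B|=z(m,n)$ exactly, so a base that is even one cell short contributes nothing.

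\begin{itemize}
\item Design-decomposition theorems do give typed bases, but Lemma~\ref{lem:classicalprofile} determines $z(m,n)$ only up to $O_k(n)$.
\item A pair-complete base with all degrees in $\{k,k+1\}$ would be maximum by the convex bound. For general $(m,n)$ no such base exists. Then $z(m,n)$ falls below that cap by residue-dependent amounts: floors, the modular cap $B_\rho$, and the transition formulas near $n^2/6$.
\end{itemize}

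The missing idea is the paper's frame, which has three parts:
\begin{itemize}
\item bulk typed quads, obtained from a $K_5$-decomposition of the multipartite index graph using the gadget of Lemma~\ref{lem:affinetradeprogression};
\item ordinary groups, filled so that they meet their caps with integral counts;
\item one central group carrying an exact classical maximum of Chen--Horsley--Mammoliti, so that the total equals the known global value.
\end{itemize}
Where no formula for $z$ is known, Lemma~\ref{lem:criticalstrips} instead takes an arbitrary maximum base and transplants its bounded nonquad core into the frame. Below the threshold, Lemma~\ref{lem:quantitativecritical} spreads $Y=3d/2$ linear five-rows of prescribed degrees over local groups of size $\asymp d/n+1$, keeping all of them bare. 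This base attains $\lfloor(M+10m)/4\rfloor$ exactly, and its $O(d+n)$ bare rows cost $O(m+nd)$.

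\textbf{A second failure: the (C3) check.} Checking (C3) ``exactly as in'' Theorem~\ref{thm:universalpairwindow} does not work here. In that theorem the bare rows have degree two. Here the local filler rows have degree three to five. Such a row can contain $A_i$, $C_i$ and a common $B$-column, which completes a (C3) witness for a pair matching $(AB,A_i)$ with $(BC,C_i)$. Lemma~\ref{lem:frameaugmentation} avoids this in two steps: it confines each bare row to one index group, and it matches fibers through a permutation $\sigma$ that moves every index out of its group.

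\textbf{The stability half is sound.} Your stability half is the paper's own argument for the left inequality (Lemma~\ref{lem:linearcriticalgap}). It goes through once you state the per-row bound you leave implicit. A good row of degree $b$ meets only two classes, so it covers at least
$\binom b2-\lfloor b^2/4\rfloor\ge\frac49\binom b2-\frac23$
internal pairs. The available capacity is $\sum_i\binom{|C_i|}2=M/3+O(\delta n^2)$, which is linear in $\delta$ because only squared mass deviations enter.

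\textbf{Correct the degree heuristic.} Your equation $k(k-1)/2\approx6$ gives $k=4$, not $3$, and this is exactly what produces the constant $1/12$:
\begin{itemize}
\item a $2{+}2$ quad or a $2{+}1$ triple puts exactly one third of its pairs inside a class, matching the capacity;
\item a five-row puts at least $4/10$ of its pairs inside a class.
\end{itemize}
Triangle packings govern the regime $m\approx n^2/6$, not the threshold.
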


\begin{theorem}\label{thm:threshold}
Fix $c>0$. If $c\ge1/12$, every integer sequence with
$n\to\infty$ and $m/n^2\to c$ satisfies
$\zl(m,n)/(mn)\to1/3$.
If $0<c<1/12$, there is $\varepsilon(c)>0$ such that every
sequence with $n\to\infty$ and $m/n^2\to c$ satisfies
\[
 \limsup\frac{\zl(m,n)}{mn}\le\frac13-\varepsilon(c).
\]
\end{theorem}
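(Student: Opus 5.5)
The plan is to derive Theorem~\ref{thm:threshold} from Theorem~\ref{thm:sharpasymptotic} in both directions, with only one extra argument, needed for small $c$ outside that theorem's range.

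\emph{The case $c\ge1/12$.} Along a sequence with $m/n^2\to c$, we have $m\ge(1/12-\varepsilon)n^2$ eventually, with $\varepsilon$ the absolute constant of Theorem~\ref{thm:sharpasymptotic}. Its upper deficit bound gives
\[
 \frac{mn}{3}-\zl(m,n)\le C\Bigl[m+n\bigl(n^2/12-m\bigr)_+\Bigr].
\]
Divide by $mn$. The term $Cm/(mn)=C/n$ tends to zero. The second term equals $C(n^2/12-m)_+/m$. Since $m/n^2\to c\ge1/12$, its numerator is $o(n^2)$ and $m\asymp n^2$, so it also tends to zero. For the reverse inequality, \eqref{eq:generaloriginalupper} and \eqref{eq:classicalupper} give $\zl\le mn/3+O(m+n\sqrt m)=mn/3+o(mn)$. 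Hence $\zl/(mn)\to1/3$.

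\emph{The case $c\in[1/12-\varepsilon,1/12)$.} Here I use the lower deficit bound directly:
\[
 \frac{mn}{3}-\zl\ge cn\bigl(n^2/12-m\bigr)-Cm.
\]
Divide by $mn$. As $n\to\infty$, the right side tends to $c_{\rm abs}(1/12-c)/c$, since $Cm/(mn)\to0$. This limit is positive, so one may take $\varepsilon(c)=c_{\rm abs}(1/12-c)/(2c)$.

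\emph{The case $0<c<1/12-\varepsilon$.} This lies outside the stated range of Theorem~\ref{thm:sharpasymptotic}, and it is the step I expect to be the main obstacle. My first attempt is a monotonicity reduction. Suppose a limited augmentation at $(m,n)$ had density near $1/3$. Add $m'-m$ rows, with $m'\approx(1/12-\varepsilon/2)n^2$, and try to obtain a limited configuration at $(m',n)$ of density near $1/3\cdot(m/m')+(\text{new-row contribution})/(m'n)$. This would contradict the quantitative deficit at $m'$. The difficulty is that maximum bases at $m'$ need not restrict to maximum bases at $m$, so I do not expect this reduction to go through cleanly.

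The more robust route is to rerun the upper-bound mechanism. Assume $\zl/(mn)\to1/3$ along a subsequence. Since $F=O(m)=o(mn)$, Lemma~\ref{lem:cleanholes} applies to full occupancy and yields the balanced three-block template with exact good diagonal holes. As in the fixed-cell count in the proof of Theorem~\ref{thm:secondorder}, each good fixed row then has its support in only two column classes. This caps the column pairs it covers inside classes by $\sum_i\binom{|C_i|}{2}\approx n^2/6$.

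A maximum base in the regime $m\asymp n^2$, through the classical profile (Lemma~\ref{lem:classicalprofile} and \eqref{eq:terminaldeficit}-type identities), must cover $M-O(n+\Delta)\approx n^2/2$ column pairs. Splitting a row of degree $b$ across two classes covers at most about $\binom b2$ pairs in total, but only the within-class portion is useful. Comparing the pairs needed with the pairs allowed by the template forces a capacity inequality of the form
\[
 m\ge(1/12-o(1))n^2.
\]
This contradicts $c<1/12$. The hard part is making this counting precise: bounding how many column pairs a template-compatible row of profile degree $k$ or $k+1$ covers, and checking that these caps sum to the $1/12$ threshold. Once that holds, compactness over $c$ yields a positive $\varepsilon(c)$.
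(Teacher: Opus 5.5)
Your case $c\ge1/12$ is the paper's argument. Invoking the lower bound of Theorem~\ref{thm:sharpasymptotic} for $c$ just below $1/12$ is also valid, provided you take that interval open at $1/12-\varepsilon$: at $c=1/12-\varepsilon$ exactly, $m\ge(1/12-\varepsilon)n^2$ need not hold eventually.

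\textbf{The gap.} The genuine gap is the range $0<c\le1/12-\varepsilon$. There you name the mechanism but explicitly leave the capacity count undone, and that count is the whole content of the subcritical claim. The lower bound of Theorem~\ref{thm:sharpasymptotic} is itself derived from this count, so splitting at $1/12-\varepsilon$ saves nothing. Your remark that ``only the within-class portion is useful'' also points the wrong way: the exact good diagonal holes force pairs \emph{into} the classes. A retained row whose support splits as $u+v=b$ over two column classes covers only $uv\le\lfloor b^2/4\rfloor$ cross-class pairs. It therefore covers at least $I_b=\binom b2-\lfloor b^2/4\rfloor$ within-class pairs. The missing inequality is
\[
 I_b\ge\frac49\binom b2-\frac23\qquad\text{for every integer } b\ge0 .
\]
The difference is $(b-4)(b-6)/36$ for even $b$ and $((b-5)^2+8)/36$ for odd $b$. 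If the retained rows cover $M-o(n^2)$ pairs, they need at least $\frac49M-\frac23m-o(n^2)$ within-class pairs. The capacity is only $\sum_i\binom{|C_i|}2=M/3+o(n^2)$. Comparing gives $m\ge M/6-o(n^2)$, which is exactly the threshold $1/12$.

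\textbf{Two further ingredients.} First, the holes are exact only for good-by-good cells, so you must discard bad rows and rows through bad columns. You then have to show this loses only $o(n^2)$ covered pairs, and this is what Lemma~\ref{lem:classicalprofile} is really needed for. For $m\le M/6$, that lemma and its identity fix a bounded $k\ge4$ such that rows outside degrees $k,k+1$ have total squared degree $O(n)$. Hence the $o(m)$ bad rows cover only $o(n^2)$ pairs. Also, each column lies in at most $(n-1)/(k-1)$ rows of degree at least $k$, which controls the rows through bad columns. Second, for fixed $c$ you need a uniform $\varepsilon(c)$, and ``compactness over $c$'' does not supply it. You can get it in either of two ways:
\begin{itemize}
\item a diagonal extraction, producing from hypothetical counterexample sequences a single sequence with $m/n^2\to c$ and density tending to $1/3$;
\item a quantitative count, as in the paper's Lemma~\ref{lem:compactsubcriticalgap}, applying Lemma~\ref{lem:cleanholes} at a fixed $\tau$. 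This gives $1/12-m/n^2\le C_1\delta+C_2/n$, hence $\zl(m,n)/(mn)\le1/3-\kappa(1/12-m/n^2)+C/n$ on $c_0n^2\le m\le n^2/12$. Theorem~\ref{thm:subcriticalgap} then covers every $0<c<1/12$ at once.
\end{itemize}
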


The obstruction is a capacity count: the exact diagonal holes from
Lemma~\ref{lem:cleanholes} leave too few column pairs for a maximum base
below the threshold. Matching constructions use rows of degrees three
and four near the boundary. Appendix~\ref{sec:constructions} gives the
capacity argument, the constructions, and the proofs of both statements.

%% file: open_problems.tex
\section{Concluding remarks and open problems}\label{sec:openproblems}
The unrestricted leading density is determined by the pairing conditions.
Requiring a maximum fixed base introduces a further obstruction, measured
by the separation below quadratic row growth and the threshold at
$m/n^2=1/12$. Several density questions remain.

\paragraph{The limited density below quadratic growth.}
For $n\le m=o(n^2)$, Theorem~\ref{thm:separation} leaves the interval
\[
 \frac14\le\liminf\frac{\zl(m,n)}{mn}
 \le\limsup\frac{\zl(m,n)}{mn}\le\frac3{10}.
\]
Does the normalized limited parameter converge even in the square case
$m=n$? More generally, is there a common limiting constant throughout
this regime, or does the answer depend on the growth of $m$ relative
to $n$? The sharp quarter guarantee for a supplied maximum base does
not settle this question, since $\zl$ optimizes over all maximum bases.

\paragraph{The size of the separation.}
The unrestricted leading term and the preceding bounds give
\[
 \frac1{30}\le
 \liminf\frac{\za(m,n)-\zl(m,n)}{mn}
 \le\limsup\frac{\za(m,n)-\zl(m,n)}{mn}
 \le\frac1{12}
\]
along every sequence in the same regime. Determining the best uniform
lower coefficient, or the limiting gap for squares, would sharpen the
answer to the original equality question. The exact certificate proves
the coefficient $1/30$ as a lower guarantee; it does not establish its
optimality.

\paragraph{The density profile below the threshold.}
For fixed $0<c<1/12$, does $\zl(m,n)/(mn)$ have a limit depending only
on $c$ whenever $m/n^2\to c$? Theorem~\ref{thm:threshold} separates it
from $1/3$, while Theorem~\ref{thm:sharpasymptotic} bounds the loss on
both sides near the threshold. An exact profile would also determine
the leading coefficient of that loss as $c\uparrow1/12$.

The pivot argument uses disjoint open-rectangle neighborhoods, and the
stability argument recovers the structure forced by near equality.
It is natural to ask which related augmentation conditions admit an
analogous description. For the motivating sum-of-squares problem, a
further question is whether such structure can guide constructions
beyond the original admissibility conditions, where the relaxed bounds
discussed in the introduction are already stronger in one family.

%% file: affine_core.tex
\section{Maximum-base constructions for the critical threshold}\label{sec:constructions}
This appendix proves Theorems~\ref{thm:sharpasymptotic} and~\ref{thm:threshold}.
We first prove the obstruction below the threshold, then construct
maximum bases at and above it. The final subsection assembles the bounds.

\subsection{The subcritical obstruction}
\begin{lemma}[Classical degree profile]\label{lem:classicalprofile}
Fix an integer $k\ge2$ and put $M=\binom n2$. Uniformly for sufficiently large $n$
and $M/\binom{k+1}2\le m\le M/\binom k2$,
\[
 z(m,n)=\frac Mk+\frac{k+1}{2}m+O_k(n).
\]
For every extremal base its deficit
$D=M-kz(m,n)+k(k+1)m/2$ is nonnegative and $O_k(n)$.
\end{lemma}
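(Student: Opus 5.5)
The plan is to use the terminal-deficit identity \eqref{eq:terminaldeficit} with $L=M-\sum_r\binom{b_r}2\ge0$. We rewrite it relative to degrees $k$ and $k+1$ instead of $1$ and $2$. The elementary identity
\[
 \binom b2=kb-\frac{k(k+1)}2+\frac{(b-k)(b-k-1)}2
\]
holds for every integer $b\ge0$, and its last term is a nonnegative integer vanishing exactly at $b\in\{k,k+1\}$. Summing over rows of any $C_4$-free base with $F$ cells gives the leave-and-penalty identity
\[
 M-kF+\frac{k(k+1)}2m \;=\; L+\sum_r\frac{(b_r-k)(b_r-k-1)}2\;\ge0 .
\]
This yields the upper bound $F\le M/k+(k+1)m/2$ for every base. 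It also shows that the deficit $D$ of any base equals leave plus penalty, which is nonnegative. So everything reduces to a lower construction: we need a $C_4$-free base with $F\ge M/k+(k+1)m/2-O_k(n)$. Equivalently, the construction must have leave plus penalty $O_k(n)$. Once we have it, every extremal base has $D\le$ (that construction's deficit) $=O_k(n)$, since a larger $F$ means a smaller $D$.

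\textbf{Construction.} The range $M/\binom{k+1}2\le m\le M/\binom k2$ is exactly the range where we can write $m=x+y$ so that the pairs covered by $x$ rows of degree $k+1$ and $y$ rows of degree $k$ satisfy $x\binom{k+1}2+y\binom k2\approx M$. We need an edge-disjoint packing of $K_n$ by $x$ copies of $K_{k+1}$ and $y$ copies of $K_k$ that covers all but $O_k(n)$ edges, with counts prescribed up to $O_k(n)$. Any rows whose count cannot be met are filled by lower-penalty choices (degree $k$ or $k+1$ rows meeting already used pairs are forbidden, so we use rows of degree at most $k$ built from leftover pairs). The extra rows are balanced against the $O_k(n)$ slack.

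To build the packing, I would start from a resolvable or near-complete design. By Wilson's existence theory, for large $n$ there is a $K_{k+1}$-decomposition of $K_{n'}$ with $n'\ge n-O_k(1)$ in the right residue classes. Alternatively, a Kirkman-type or pairwise balanced design with block sizes $\{k,k+1\}$ covers all pairs of $K_n$ except $O_k(n)$. To interpolate the mixture continuously, take a $K_{k+1}$-decomposition and break some blocks: deleting one point from a $(k+1)$-block leaves a $k$-block and loses $k$ pairs. This does not keep the leave small, so instead I would use PBD results. Gustavsson/Keevash-type decomposition of $K_n$ minus a sparse leave into $K_k$ and $K_{k+1}$ copies with prescribed proportions achieves any ratio. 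The divisibility conditions cost only $O_k(n)$ removed edges. This is the main obstacle: citing a clean theorem that delivers the prescribed numbers of each block size with leave $O_k(n)$, uniformly across the range. The endpoints reduce to pure $K_{k+1}$ or $K_k$ designs, and a mixture of two such designs on complementary-ish structures seems clumsy. So I would rely on the asymptotic decomposition theorem for dense $F$-divisible graphs, applied to $K_n$ minus an $O_k(n)$-edge correction. I would split the vertex set into cliques in the required proportion to generate the $\{k,k+1\}$ mixture.

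Finally I would compute $F=kx+(k+1)y'$ plus correction rows. Using $m=\Theta_k(n^2)$ and $M/k=O(n^2)$, the identity then gives $z(m,n)=M/k+(k+1)m/2+O_k(n)$ and $0\le D=O_k(n)$ for every extremal base.
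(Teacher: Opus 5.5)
\textbf{This is a genuine gap: the lower-bound construction is missing.} Your upper bound matches the paper's. The identity $\binom b2=kb-\frac{k(k+1)}2+\frac{(b-k)(b-k-1)}2$ makes $D$ equal to leave plus penalty for every base, so $D\ge0$. The lemma then reduces to exhibiting one $C_4$-free base with leave plus penalty $O_k(n)$, uniformly for $M/\binom{k+1}2\le m\le M/\binom k2$.

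That construction is the substance of the lemma, and your proposal does not provide it. You call it the main obstacle, but the tools you point to do not deliver it as described:
\begin{itemize}
\item Splitting the point set into a part carrying $K_{k+1}$'s and a part carrying $K_k$'s leaves every cross pair uncovered. For $k\ge3$ these pairs form a complete bipartite graph, which is triangle-free and cannot be cut into blocks on its own. At intermediate $m$ both parts have order $\Theta(n)$, so the leave is $\Theta(n^2)$.
\item The Gustavsson and Barber--K\"uhn--Lo--Osthus theorems need the host graph's minimum degree to be a large fraction of its order. They cannot be applied to two complementary graphs of intermediate density.
\item Typical-graph versions have thresholds depending on the density, which degenerates at the ends of the range.
\item A single fixed graph made of $a$ copies of $K_{k+1}$ and $b$ copies of $K_k$ realizes only finitely many ratios, with thresholds depending on $(a,b)$.
\item Wilson-type existence of PBDs with block sizes $\{k,k+1\}$ gives no control over how many blocks of each size occur.
\end{itemize}

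The missing idea is to do the mixing inside macro-blocks of bounded order.
\begin{itemize}
\item Fix $v_0$ admitting both a $2$-$(v_0,k,1)$ and a $2$-$(v_0,k+1,1)$ design. The paper uses $v_0=7$ for $k=2$, $v_0=13$ for $k=3$, and for $k\ge4$ a large $v_0\equiv1\pmod{k(k^2-1)}$ via Horsley--Pike.
\item Take a $2$-$(n',v_0,1)$ design with $n'\equiv1\pmod{v_0(v_0-1)}$ and $0\le n-n'<v_0(v_0-1)$.
\item Fill each macro-block independently with either internal design. Every row then has degree $k$ or $k+1$, so the penalty is zero.
\item Put $P=\binom{n'}2$. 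The leave is $M-P=O_k(n)$. The row count runs from $P/\binom{k+1}2$ to $P/\binom k2$ in the fixed step $\binom{v_0}2/\binom k2-\binom{v_0}2/\binom{k+1}2$.
\item Choose the largest attainable count $S\le m$ and append $m-S=O_k(n)$ singleton rows, each with penalty $\binom k2$.
\end{itemize}
The resulting deficit is $(M-P)+\binom k2(m-S)=O_k(n)$, which is exactly what your reduction requires.
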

\begin{proof}
The identity
$\binom b2-kb+k(k+1)/2=(b-k)(b-k-1)/2\ge0$ gives the upper bound.
For $k\ge4$, use \cite[Theorem~1.1]{designexistence} to choose a fixed sufficiently large
$v_0\equiv1\pmod{k(k^2-1)}$ admitting both $2$-$(v_0,k,1)$ and
$2$-$(v_0,k+1,1)$ designs. For $k=2$, take $v_0=7$: all pairs and
the translates of $\{0,1,3\}$ modulo $7$ give the two designs.
For $k=3$, take $v_0=13$: the translates of $\{0,1,4\}$ and
$\{0,2,7\}$ give the triples, and those of $\{0,1,3,9\}$ give the
quads. For each cyclic design, the listed starter blocks have every
nonzero ordered difference exactly once. The same theorem gives a $2$-$(n',v_0,1)$
design for
\[
 n'=1+v_0(v_0-1)\left\lfloor\frac{n-1}{v_0(v_0-1)}\right\rfloor.
\]
Its blocks cover $P=\binom{n'}2=M-O_k(n)$ pairs. Fill each block with
either internal design. The attainable row counts run from
$P/\binom{k+1}2$ to $P/\binom k2$ in a fixed integer step. Choose the
largest count $S\le m$; then $m-S=O_k(n)$, including at the endpoints.
Append $m-S$ singleton rows. Pair-disjointness makes the base $C_4$-free,
and its fixed-cell count is
\[
 \frac Pk+\frac{k+1}{2}S+(m-S)
 =\frac Mk+\frac{k+1}{2}m
   -\frac{M-P}{k}-\frac{k-1}{2}(m-S).
\]
This proves the lower bound. Horsley and Pike's theorem is used only
for ordinary designs: its color-two, index-one case applies to every
fixed block size at least four. The explicit small seeds need no
additional congruence; the displayed choice of $n'$ ensures macro-design
admissibility in every case.
\end{proof}

The obstruction at a fixed positive quadratic ratio below $1/12$ is a
capacity count. A maximum base with $m\le M/6$ has almost all row degrees
in $\{k,k+1\}$ for one of finitely many fixed $k\ge4$, and by the clean holes each good row has its fixed support inside
two column classes. A row of degree $b$ requires at least $\frac49\binom b2-\frac23$
internal pairs, while the three classes together contain only about
$M/3$ internal pairs. Summing and comparing the two counts forces
$m\ge n^2/12-O(\delta n^2+n)$, where $\delta$ measures the density defect.

\begin{lemma}[Compact subcritical gap]\label{lem:linearcriticalgap}
\label{lem:compactsubcriticalgap}
For each fixed $0<c_0<1/12$ there are $\kappa,C,N>0$, depending only on
$c_0$, such that, for $n\ge N$ and $c_0n^2\le m\le n^2/12$,
\[
 \frac{\zl(m,n)}{mn}
 \le\frac13-\kappa\left(\frac1{12}-\frac m{n^2}\right)+\frac Cn.
\]
In particular the constants are absolute on $n^2/16\le m\le n^2/12$.
\end{lemma}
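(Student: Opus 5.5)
We argue by contradiction-free dichotomy on the density defect. Fix an optimal configuration above a maximum base, with $T$ selected pairs and $F=z(m,n)=O_{c_0}(n^2)$ fixed cells, so that $F/(mn)=O(1/n)$. Put $u=2/3-2T/(mn)$ and let $\eta$ be the parameter of Lemma~\ref{lem:cleanholes}; thus $\eta=u_++O(1/n)$. Fix $\tau=\tau_0$ once and for all.

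\emph{Large-defect case.} If $\eta>c\tau_0$, then $u\ge c\tau_0-O(1/n)$. Since $1/12-m/n^2\le1/12$, the claim holds as soon as $\kappa\le6c\tau_0$, after enlarging $C$. So we may assume $\eta\le c\tau_0$. Lemma~\ref{lem:cleanholes} then supplies row and column partitions with $\sum_i(b_i-1/3)^2=O(\eta)$, an $O(\eta)$ fraction of bad lines, and exact good-by-good diagonal holes. The goal is an inequality $m/n^2\ge1/12-O(\eta)-O(1/n)$. Rearranging it gives $u\ge\kappa(1/12-m/n^2)-C/n$, which is the lemma.

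\emph{Degree profile.} Since $m\le n^2/12<M/6$ and $m\ge c_0n^2$, $m$ lies in one of finitely many ranges $[M/\binom{k+1}2,M/\binom k2]$ with $k\ge4$ bounded in terms of $c_0$. Lemma~\ref{lem:classicalprofile} and the identity in its proof then give
\[
\sum_r\binom{b_r}2=M-L,
\]
with $L=O_k(n)$. The same lemma gives $\sum_r(b_r-k)(b_r-k-1)=O_k(n)$, so all but $O_k(n)$ rows have degree $k$ or $k+1$, and the exceptional rows have total squared degree $O_k(n)$.

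\emph{Capacity count.} Restrict to good rows and to fixed cells in good columns. By the exact diagonal holes, a good row in $R_j$ has its retained support split as $u_r+v_r$ between the two other column classes. Minimizing over such splits gives the elementary bound
\[
\binom{u}{2}+\binom{v}{2}\ge\frac49\binom{u+v}2-\frac23
\]
for all integers. I would verify this by checking the balanced split case by case: the continuous gap is $(b-4)(b-6)/36$, and odd $b$ are handled by the floor. Distinct rows share no column pair by $C_4$-freeness. Summing therefore gives
\[
\frac49\Bigl(M-L-E_{\rm bad}\Bigr)-\frac23m\le\sum_i\binom{|C_i|}2 .
\]
Here $E_{\rm bad}$ counts column pairs lost to bad rows and bad columns. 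Because bad rows mostly have degree at most $k+1$, $E_{\rm bad}=O_k(\eta m+\eta n^2+n)$. On the right-hand side, the squared mass bound gives $\sum_i\binom{|C_i|}2=M/3+O(\eta n^2)+O(n)$; the first-order deviations cancel because the $b_i$ sum to one. Combining, $\tfrac19M-\tfrac23m\le O_k(\eta n^2+n)$, that is, $m\ge n^2/12-O(\eta n^2+n)$, which is the required linear inequality.

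\emph{Main obstacle.} The delicate step is keeping every error linear in $\eta$ rather than $\sqrt\eta$. This requires three things: using the squared mass deviation, not the individual $O(\sqrt\eta)$ masses; bounding the bad-line pair losses through the degree profile rather than through the crude $b_{\max}$; and checking that the $-\tfrac23$ additive constant summed over $m$ rows is exactly what produces the factor $1/12$. If the bad-row accounting turns out weaker, I would fall back on the squared-degree control from Lemma~\ref{lem:classicalprofile}.
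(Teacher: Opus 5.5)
Your proposal is correct and follows essentially the paper's argument. It uses clean holes with a fixed $\tau$, the classical degree profile to keep the bad-line pair losses at $O(\eta n^2+n)$, the split bound $\binom u2+\binom v2\ge\frac49\binom{u+v}2-\frac23$, and the comparison with the class capacity $\sum_i\binom{|C_i|}2\le M/3+O(\eta n^2)$. Two small repairs are needed: $M/6=n^2/12-n/12$ lies \emph{below} $n^2/12$, so the strip $M/6<m\le n^2/12$ is not covered by your $k\ge4$ (harmless: use $k=3$ there, or absorb it into $C/n$ as the paper does), and the final rearrangement when $u<0$ needs the lower bound $u\ge-O(1/n)$ supplied by Theorem~\ref{thm:paireddensity}.
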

\begin{proof}
Take an extremizing configuration, put $M=\binom n2$, $F=z(m,n)$, and
write $\alpha=2T/(mn)$ and $\delta=(2/3-\alpha)_++1/m+1/n$.
The bound $F\le m+M$ gives
$F/(mn)\le(1+1/(2c_0))/n$. If $\delta$ is sufficiently small in terms
of $c_0$, apply Lemma~\ref{lem:cleanholes} with a fixed small $\tau$;
its parameter is $O_{c_0}(\delta)$. Thus bad-line fractions are
$O_{c_0}(\delta)$, good diagonal holes are exact, and squared column
class-mass deviations are $O_{c_0}(\delta)$.

If $m>M/6$, then $1/12-m/n^2\le1/(12n)$, which will be absorbed below.
Otherwise choose $k\ge4$ from the actual interval of
Lemma~\ref{lem:classicalprofile}. Since $k(k-1)\le1/c_0$, only finitely
many $k$ occur and their constants and thresholds are uniform. Exact
base extremality gives
\[
 D=\left(M-\sum_r\binom{b_r}{2}\right)
    +\sum_r\frac{(b_r-k)(b_r-k-1)}2=O_{c_0}(n).
\]
The leave has $O_{c_0}(n)$ pairs. Rows outside degrees $k,k+1$ have
total squared degree $O_{c_0}(n)$, using
$b^2\le(k+2)^2(b-k)(b-k-1)/2$ off those degrees; discard them.
Every remaining row has bounded degree in terms of $c_0$, and each
column meets at most $(n-1)/(k-1)$ such rows. Discard bad rows and
rows through bad columns. The additional pair loss is
$O_{c_0}(\delta n^2)$, so the retained rows cover
$M-O_{c_0}(\delta n^2+n)$ distinct fixed pairs. Their supports lie
in two column classes by the exact holes.

For every integer $b\ge0$,
\[
 I_b=\binom b2-\left\lfloor b^2/4\right\rfloor
 \ge\frac49\binom b2-\frac23.
\]
The difference is $(b-4)(b-6)/36$ for even $b$, and
$((b-5)^2+8)/36$ for odd $b$, both nonnegative.
The retained rows require at least
$4M/9-2m/3-O_{c_0}(\delta n^2+n)$ internal pairs. Writing
$p_i=|C_i|/n$, their capacity is exactly
\[
 \sum_i\binom{|C_i|}{2}
 =n^2/6-n/2+\frac{n^2}{2}\sum_i(p_i-1/3)^2
 \le M/3+O_{c_0}(\delta n^2).
\]
Comparison yields
\begin{equation}\label{eq:criticaldeficitdelta}
 1/12-m/n^2\le C_1\delta+C_2/n,
\end{equation}
with constants depending only on $c_0$. For larger $\delta$ this follows
by enlarging $C_1$, because the left side is at most $1/12-c_0$.
The case $m>M/6$ is absorbed by $C_2/n$.

Put $\gamma=1/3-\zl(m,n)/(mn)$ and
$\zeta=2/3-\alpha+4/m+4/n$. The paired bound gives
$\alpha\le2/3+2/m+2/n$, hence $\zeta\ge\delta$. For large $n$,
$m\ge n$, and the fixed-cell bound gives
\[
 \zeta\le2\gamma+(10+1/c_0)/n.
\]
Substitute into \eqref{eq:criticaldeficitdelta} and rearrange. This also
handles finite augmented densities above $1/3$ without truncating
$\gamma$ to its positive part.
\end{proof}

\begin{theorem}[Subcritical gap]\label{thm:subcriticalgap}
For every $0<c<1/12$, there are $\epsilon,\eta>0$ and $N$ such that
$\zl(m,n)/(mn)\le1/3-\epsilon$ whenever $n\ge N$, $m\ge n$ and
$|m/n^2-c|<\eta$.
\end{theorem}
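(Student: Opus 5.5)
The plan is to reduce Theorem~\ref{thm:subcriticalgap} to Lemma~\ref{lem:compactsubcriticalgap} and a compactness choice of constants. Fix $0<c<1/12$ and choose $c_0$ with $0<c_0<c$, say $c_0=c/2$. Lemma~\ref{lem:compactsubcriticalgap} supplies $\kappa,C,N_0$ depending only on $c_0$. Next choose $\eta>0$ small enough that $c-\eta>c_0$ and $c+\eta<1/12$; for definiteness take $\eta=\min\{c/2,(1/12-c)/2\}/2$. Then every pair $(m,n)$ with $|m/n^2-c|<\eta$ satisfies $c_0n^2\le m\le n^2/12$, so the lemma applies whenever $n\ge N_0$.

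In that window we have
\[
 \frac1{12}-\frac m{n^2}\ge\frac1{12}-c-\eta\ge\frac{1/12-c}{2}.
\]
The lemma therefore gives
\[
 \frac{\zl(m,n)}{mn}\le\frac13-\kappa\,\frac{1/12-c}{2}+\frac Cn.
\]
Put $\epsilon=\kappa(1/12-c)/4$. Choose $N\ge N_0$ with $C/N\le\epsilon$. For $n\ge N$ this yields $\zl(m,n)/(mn)\le 1/3-\epsilon$, which is the assertion. The hypothesis $m\ge n$ is automatic here, because $m\ge c_0n^2\ge n$ for large $n$, so it only requires enlarging $N$.

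All the substance lies in Lemma~\ref{lem:compactsubcriticalgap}, which has already been proved. In that lemma, stability via Lemma~\ref{lem:cleanholes} forces exact diagonal holes. The profile of Lemma~\ref{lem:classicalprofile} then forces rows of degree $k,k+1$ whose supports must fit inside two column classes. Finally, the internal-pair capacity count gives $1/12-m/n^2\le C_1\delta+C_2/n$, and this converts into a density loss. For the present statement, the only point needing care is bookkeeping: the constants depend on $c_0$, hence only on $c$, and the window for $m/n^2$ must be kept strictly inside $(c_0,1/12)$ so that the deficit $1/12-m/n^2$ stays bounded below uniformly. I do not expect a genuine obstacle beyond this.
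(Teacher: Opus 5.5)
Your proposal is correct and follows the paper's proof: it applies Lemma~\ref{lem:compactsubcriticalgap} with $c_0=c/2$, takes $\eta$ below $\min\{c/2,(1/12-c)/2\}$, sets $\epsilon=\kappa(1/12-c)/4$, and chooses $N$ so that $C/N\le\epsilon$. Your remark that $m\ge n$ follows from $m\ge c_0n^2$ once $n$ is large supplies a small detail the paper leaves implicit.
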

\begin{proof}
In Lemma~\ref{lem:compactsubcriticalgap}, take $c_0=c/2$,
$\eta<\min\{c/2,(1/12-c)/2\}$ and $\epsilon=\kappa(1/12-c)/4$,
then choose $N$ so that $C/n\le\epsilon$.
\end{proof}

We call a four-point block a quad. A bare row receives no selected cells;
a row of type $AB$ has all its fixed and selected cells in $A\cup B$,
and the other two-color types are defined similarly.

\subsection{A local trade}
\begin{lemma}\label{lem:affinetradeprogression}
On three colored copies of five indices there are fifteen quads,
five of each type $AB,BC,CA$, covering every pair of differently indexed
columns exactly once. Each five-quad group may be replaced by ten triples
of the same type and with the same covered pairs. Within that group,
each point's incidence changes from two to three.
\end{lemma}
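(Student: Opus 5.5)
We prove the lemma with an explicit cyclic construction over $\mathbb Z_5$, checked by difference counting in the same way as the cyclic seed designs in Lemma~\ref{lem:classicalprofile}. Write the columns as $A_i,B_i,C_i$ with $i\in\mathbb Z_5$.

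\emph{Pair count.} The fifteen points have $\binom{15}{2}=105$ pairs. Each index $i$ contributes $3$ same-index pairs, so $15$ pairs are same-indexed and $90$ pairs are differently indexed. Fifteen quads cover exactly $15\cdot6=90$ pairs, so it is enough to show that every differently indexed pair is covered at least once. Equivalently, we can check that each pair is covered exactly once by a difference count.

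\emph{The quads.} Take the five quads of type $AB$
\[
 Q^{AB}_t=\{A_t,A_{t+1},B_{t+2},B_{t+4}\},\qquad t\in\mathbb Z_5,
\]
and obtain the $BC$ and $CA$ quads by the color rotation $A\to B\to C\to A$. The indices $t,t+1,t+2,t+4$ are distinct, so no quad contains a same-index pair.
\begin{itemize}
\item Between the two colors of its type, $Q^{AB}_t$ realizes the differences $(t+2)-t,\ (t+4)-t,\ (t+2)-(t+1),\ (t+4)-(t+1)$, that is, $2,4,1,3$. Each nonzero residue occurs once, so as $t$ varies the $AB$ quads cover each of the $20$ differently indexed $AB$ pairs exactly once.
\item The $AB$ quads cover the $AA$ pairs with difference $\pm1$ and the $BB$ pairs with difference $\pm2$.
\item After rotation, the $CA$ quads cover the $AA$ pairs with difference $\pm2$. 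Together with the $AB$ quads this covers all $10$ differently indexed $AA$ pairs exactly once.
\item The $BB$ and $CC$ pairs follow by the same rotation.
\end{itemize}

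\emph{The trade.} The $AB$ group covers three sets of pairs: the $5$-cycle of $AA$ pairs with difference $\pm1$, the $5$-cycle of $BB$ pairs with difference $\pm2$, and all $20$ cross pairs. Replace it by the ten triples
\[
 \{A_t,A_{t+1},B_{t+3}\},\qquad \{B_s,B_{s+2},A_{s+1}\},\qquad s,t\in\mathbb Z_5.
\]
The first family covers the $AA$ difference-$1$ cycle, and its cross differences $B-A$ are $3$ and $2$. The second family covers the $BB$ difference-$2$ cycle, and its cross differences $B-A$ are $-1=4$ and $2-1=1$.
\begin{itemize}
\item The cross differences $\{3,2,4,1\}$ are again all nonzero residues, each once. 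Both families cover $5\cdot3=15$ pairs, for $30$ in total, which equals the $30$ pairs of the quad group. So the covered pair set is unchanged.
\item All triples lie in $A\cup B$, so they keep type $AB$.
\item In the quad group each point lies in $(5\cdot2)/5=2$ quads. In the triple group an $A$-point lies in $2$ triples of the first family and $1$ of the second, and symmetrically for $B$-points. So each incidence changes from $2$ to $3$.
\end{itemize}

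\emph{Main obstacle.} The only real difficulty is choosing offsets with the required difference properties, and this reduces to solving $\{x,x-1,-y,2-y\}=\{1,2,3,4\}$ in $\mathbb Z_5$. The choice $x=3$, $y=1$ above solves it, and the quad offsets $\{0,1\},\{2,4\}$ were found the same way. Every remaining verification is the finite residue check described above.
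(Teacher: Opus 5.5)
Your proof is correct, and your configurations are exactly the paper's; only the verification differs. Your quads $\{A_t,A_{t+1},B_{t+2},B_{t+4}\}$ are its quads $\{A_j,A_{j+1},B_{j-1},B_{j+2}\}$. Under its identification $A_i\leftrightarrow\{i-1,i\}$, $B_i\leftrightarrow\{i-2,i+1\}$ of the ten points with the edges of $K_5$, your quads are the five vertex stars. Your two triple families are the triangles $\{t-1,t,t+1\}$ and $\{s,s+1,s+3\}$.

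The paper argues structurally through this $K_5$ picture:
\begin{itemize}
\item In both configurations the covered pairs are exactly the pairs of incident edges. Each such pair lies in one star (at the shared vertex) and in one triangle (its closure), so the covered pair sets agree without computation.
\item The incidence change from two to three is the number of endpoints of an edge versus the number of third vertices.
\item The two colors are the pentagon and the pentagram, which is why each triangle has two points of one color and one of the other.
\end{itemize}

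You instead check everything by cyclic difference counting in $\mathbb Z_5$, in the style of the cyclic seed designs. This is self-contained and mechanical, but the offsets $x=3$, $y=1$ have to be found by search. The $K_5$ picture produces them and explains why the trade exists.

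One small addition: you only wrote out the trade for the $AB$ group. You should state explicitly that applying the color rotation to your ten triples gives the replacements for the $BC$ and $CA$ groups.
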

\begin{proof}
On two five-sets take the quads
\[
 \{A_j,A_{j+1},B_{j-1},B_{j+2}\},\qquad j\in\mathbb F_5.
\]
Under the identification
$A_i\leftrightarrow\{i-1,i\}$ and $B_i\leftrightarrow\{i-2,i+1\}$,
these are the five stars of $K_5$ on its ten edge-points.
Replace them by the ten triples arising from triangles of $K_5$:
each pair of incident edges belongs to one star and one triangle.
Both configurations cover the same thirty pairs once.
The two color classes are complementary pentagons, so every new triple
has two points of one color and one of the other.
Each edge-point belongs to two stars and three triangles.
These uncolored configurations are classical~\cite{brouwer}; the labeling
specifies their use here.

Use the same local labeling for the cyclic groups $AB,BC,CA$.
Within a color, one group covers index differences $\pm1$ and the other
covers $\pm2$. Between two colors, precisely the unequal-index pairs
are covered. Their union therefore covers exactly the required ninety
pairs, without repetition.
\end{proof}

%% file: construction_frame.tex
\subsection{A common construction on groups of indices}\label{sec:frame}
We use the following consequence of the dense decomposition theorems
of Barber, K\"uhn, Lo, and Osthus~\cite{densedecomp}.
For each $k\in\{3,4,5\}$ there are constants $0<\delta_k<1/2$ and $v_k$
such that a graph on $v\ge v_k$ vertices has a $K_k$ decomposition
if its minimum degree is at least $(1-\delta_k)v$, its degrees are
divisible by $k-1$, and its edge count is divisible by $\binom k2$.
We also use \cite[Lemma 3.5]{chm-unbalanced}: nonnegative prescribed
point degrees $d_x$ are realized by a linear $k$-uniform hypergraph if
\begin{equation}\label{eq:prescribeddegrees}
 \sum_xd_x=kb>0,\qquad R=\max_xd_x,
 \qquad b\ge2R\bigl(1+(k-1)R\bigr).
\end{equation}
Here linear means that no point pair occurs in two hyperedges.

Write $n=3s+t$, where $s=20\lfloor n/60\rfloor$ and $0\le t<60$.
Take $s\ge v_5$ and partition the $s$ indices into groups of sizes $g_i$, all divisible by
$20$, with $\max_i g_i\le\delta_5s$. One group is central. Each old index has
three colored copies; the remaining $t$ columns are new.
The complete multipartite graph on the indices is $K_5$-divisible and
has minimum degree $s-\max_i g_i$. Decompose it into
\[
 B=\frac{s^2-\sum_i g_i^2}{20}
\]
copies of $K_5$. On each use the three cyclic local gadgets of
Lemma~\ref{lem:affinetradeprogression}, with a shared labeling.
The resulting $15B$ typed quads cover exactly the $90B$ colored column
pairs whose indices belong to different groups. Each of the $3B$
five-star groups can independently be traded for ten triples, changing
the row and fixed-cell counts by $(5,10)$.
Each typed triple may also be split into its three pair rows, preserving
its pair set and type while changing those counts by $(2,3)$.

An ordinary group with $v_i=3g_i$ old columns is allotted the pair set
on its old columns and the $t$ new columns, excluding new--new pairs;
its size is
\[
 P_i=\binom{v_i}{2}+v_it\equiv0\pmod6.
\]
The central group, together with the new columns, has order
$v_c=3g_c+t$ and receives its entire pair set. These local budgets and
the bulk budget partition all column pairs. Fill each local budget with
a $C_4$-free base using only its allotted pairs, and keep all local rows
bare. The global base is then $C_4$-free. In particular, the center may
have a leave or arbitrary row degrees.
Typed rows have fixed cells only in their two named colors of old columns;
each bare row has old columns from a single index group.

\begin{lemma}\label{lem:frameaugmentation}
Suppose the assembled base is maximum, has $m\ge n^2/24$ rows,
$L$ bare rows, and typed row counts differing by at most five.
Then it admits an augmentation with
\[
 T\ge\frac{mn}{3}-O(m+nL).
\]
Consequently $L=O(n)$ gives $\zl(m,n)=mn/3+O(m)$, and
$L=o(m)$ gives $\zl(m,n)=mn/3+o(mn)$ as $n\to\infty$.
\end{lemma}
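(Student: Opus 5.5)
The plan is to reuse the mechanism from the proof of the lower bound in Theorem~\ref{thm:universalpairwindow}: active typed rows plus fiber matching. The three types $AB,BC,CA$ replace the three row classes of the template, and the colors $A,B,C$ of the old columns replace the column classes. The new columns and all bare rows stay bare. A row of type $AB$ has all its fixed cells in colors $A\cup B$, so the cells in its $C$-columns are free, and on the $A$- and $B$-columns it loses only its fixed cells. A \emph{fiber} is one old column of one color together with all rows of one of the two types that contain that color. I would match fibers cyclically, exactly as in the displayed scheme:
\[
 (AB,A_i)\leftrightarrow(BC,C_i),\qquad (AB,B_i)\leftrightarrow(CA,C_i),\qquad (CA,A_i)\leftrightarrow(BC,B_i).
\]
Within each matched pair of fibers I pair nonfixed cells one-to-one. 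Every selected pair then joins a row of one type to a row of another type, and each endpoint's opposite cell lies in the color excluded by the other row's type. So both opposite cells are empty, which gives (C2). Condition (S) holds by construction.

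Condition (C3) is checked as in that earlier proof. Take a pair between $(AB,A_i)$ and $(BC,C_i)$. Its rows are of types $AB$ and $BC$, so their common occupied old columns lie in $B$; new columns are empty in typed rows. A witness row must occupy both $A_i$ and $C_i$, so among typed rows it has type $CA$ and therefore misses every $B$-column.

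Bare rows need a separate argument. Each bare row has old columns from a single index group, and its fixed support there is arbitrary, including in the central group. I would handle this with an index-filtering step. For each bare row $x$, its old columns lie in one group $g(x)$. The plan is to skip matching on the fibers indexed by $i$ in groups whose bare rows could occupy both $A_i$ and $C_i$ together with a $B$-column $y$ common to the pair's rows. A simpler route suffices if it works: discard, for each bare row, the selected pairs whose columns it meets. A bare row has degree $O(n)$, and each of its columns carries at most $m$ selected cells, so one bare row kills $O(mn)$ pairs. That gives $O(mnL)$, which is too big. Instead I discard only the pairs whose two column indices both lie in the bare row's support. Pairs in fiber matching share the index $i$, so a bare row kills the pairs at the indices $i$ with both $A_i$ and $C_i$ in its support, costing at most about $m$ pairs per such index. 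I need this to total $O(nL)$ pairs per row on average. I expect this to be the main obstacle, since it requires bounding, via $C_4$-freeness and maximality, how many indices a bare row covers in two colors. A fallback is to leave those whole index columns of the witness group bare. Because groups have size at most $\delta_5s$ and the local rows are only pair-disjoint, I would rather bound the killed pairs by $m$ times the number of bare rows meeting both copies, and then sum.

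The counting step follows the earlier proof. Let $h_0=\min_t R_t$, the minimum typed row count, which is at least $(m-L)/3-5$. Take $h_i=\min\{h_0,a_i,a_i'\}$, where $a_i$ and $a_i'$ are the nonfixed-cell counts of the two fibers. The total loss $\sum(h_0-h_i)$ is bounded by the fixed cells in typed rows, at most $F=O(m)$ because the base is maximum and $m\ge n^2/24$ forces $F\le m+M$. Fibers exist for $3s=n-O(1)$ old columns. Hence
\[
 T\ge(n-O(1))\bigl((m-L)/3-O(1)\bigr)-O(m)-(\text{bare-row losses})\ge\frac{mn}3-O(m+nL).
\]
The consequences then follow from Theorem~\ref{thm:paireddensity} together with $z(m,n)=O(m)$.
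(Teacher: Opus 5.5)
Your proposal has a genuine gap, and it is the step you flag as the main obstacle. With same-index matching $(AB,A_i)\leftrightarrow(BC,C_i)$, bare rows really do produce (C3) witnesses. A bare row's old support lies in one index group, and that group's local budget contains the pair $\{A_i,C_i\}$. So a bare row such as a local triple $\{A_i,B_j,C_i\}$ is allowed by the hypotheses, and it is typical of the triangle and $K_4$ fillers used later.

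The fibers $(AB,B_j)$ and $(BC,B_j)$ are almost fully selected. Hence almost every $AB$-row and $BC$-row occupies $B_j$, and such a bare row is a witness for most of the roughly $m/3$ pairs at index $i$. Your accounting charges about $m$ pairs per bad index, and a bare row of degree $b$ can make up to $b/2$ indices bad. The loss is therefore $O(mL)$, not $O(nL)$.

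In the intended applications $L=\Theta(n)$, with bounded-degree local rows covering every group. A constant fraction of the $s$ indices can then be bad, and the loss is $\Theta(mn)$. This destroys the $O(m)$ error and also the $o(mn)$ consequence. Skipping those indices, or leaving their columns bare, costs the same. The same-index scheme worked in Theorem~\ref{thm:universalpairwindow} only because its bare rows are $P$-rows of degree two, which cannot supply the three distinct columns a witness needs.

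The missing idea is to put the two indices of each matched fiber pair in different groups. Order the index groups consecutively and let $\sigma$ be the cyclic shift of the indices by $\max_i g_i$. Since $s\ge2\max_i g_i$, $\sigma(i)$ never lies in the group of $i$. Then match
$(AB,A_i)\leftrightarrow(BC,C_{\sigma(i)})$, $(AB,B_i)\leftrightarrow(CA,C_{\sigma(i)})$, and $(CA,A_i)\leftrightarrow(BC,B_{\sigma(i)})$.

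Your (C2) argument and your typed-row (C3) argument use only colors, so they go through unchanged. A bare row can no longer contain both selected columns, so bare rows cost nothing at all. Each typed fiber is still used exactly once, which gives
$T\ge3s\min_tR_t-F_{\rm typed}\ge s(m-L)-10s-F_{\rm typed}$.
The $nL$ term comes only from $3\min_tR_t\ge m-L-10$. Then $F_{\rm typed}\le m+M=O(m)$, together with $n=3s+t$ and $t<60$, finishes the bound.
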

\begin{proof}
Let $R$ be the smallest typed row count, and let $F_{\rm typed}$ count
the fixed cells in typed rows. A typed fiber consists of the cells in
one old column over all rows of its type.
Order the index groups consecutively and shift cyclically by their
maximum size. Once $s\ge2\max_i g_i$, the resulting permutation
$\sigma$ sends every index outside its group.
Pair reciprocal fibers as follows:
\[
 (AB,A_i)\leftrightarrow(BC,C_{\sigma(i)}),\quad
 (AB,B_i)\leftrightarrow(CA,C_{\sigma(i)}),\quad
 (CA,A_i)\leftrightarrow(BC,B_{\sigma(i)}).
\]
For each of these $3s$ fiber pairs, choose as many nonfixed cells as
possible on each side, with equal sizes $h_q$, and match them by any
bijection. If $f_q,f'_q$ are the two fixed-cell counts, then
$h_q\ge R-f_q-f'_q$. Each typed fiber occurs exactly once, so
\[
 T=\sum_q h_q\ge3sR-F_{\rm typed}
    \ge s(m-L)-10s-F_{\rm typed}.
\]
Zero-sized matchings are allowed. These matchings use each selected cell once and have both opposite cells
empty. The common occupied columns of two endpoint rows lie in their
shared color. A typed row containing both selected columns uses the other
two colors, so cannot complete a (C3) witness. A bare row has old columns
from only one index group, so cannot contain both selected columns.
For example, an $AB$-row selected at an $A$-column is paired with a
$BC$-row selected at a $C$-column. Their common columns lie in $B$,
whereas a typed row occupying both selected columns has type $CA$ and
misses $B$. The index shift excludes a bare row from occupying both
selected columns, and the other two matches follow cyclically.
No new column is selected or occupied in an endpoint row. This proves
literal (C3) in full occupancy.
Since $n=3s+t$, $t<60$, and $F_{\rm typed}\le F\le m+M\le13m$,
the count gives $T\ge mn/3-O(m+nL)$.
Finally \eqref{eq:generaloriginalupper} and $F=O(m)$ give the
asserted matching upper estimates.
\end{proof}

In the bounded-group applications, take ordinary index size $g=20w$
and central size $g_c\in[g,2g)$, with $w$ sufficiently large in terms
of the fixed parameters of that application. Write
\[
 v=3g=60w,\quad v_c=3g_c+t,\quad
 P=\binom v2+vt,\quad M_c=\binom{v_c}{2},\quad qv+v_c=n,
\]
where $q$ is the number of ordinary groups.

%% file: critical_boundary.tex
\subsection{The full critical boundary}\label{sec:criticalboundary}
Put $\rho=(n-1)\bmod3$ and $\beta=(1-n)\bmod3$.
The relevant classical bounds are
\[
 A_4=\frac{M+10m}{4},\qquad A_3=\frac M3+2m,
 \qquad
 B_\rho=\Phi_\rho(M,m,n),
\]
where the last expression is used only for $\rho=1,2$, and
\[
 \Phi_\rho(P,r,x)=\frac{a_\rho P+90r-2x}{b_\rho},
 \qquad (a_1,b_1)=(13,42),\quad(a_2,b_2)=(11,39).
\]
Write $\ell_n=M/6-2\rho n/15$ and $u_n=M/6+\beta n/6$.
Chen--Horsley--Mammoliti~\cite[Theorem 1.3]{chm-unbalanced} give the
floors of $A_4,B_\rho,A_3$, respectively, on
\begin{equation}\label{eq:criticalknownranges}
 m\le\ell_n-52,\qquad
 \ell_n+24\le m\le u_n-120,\qquad
 m\ge u_n+40
\end{equation}
within any fixed linear window about $M/6$, for sufficiently large $n$.
The middle range is omitted when $\rho=0$.
For the first and third ranges the other endpoints in the cited theorem
are $(M+((1-n)\bmod4)n)/10+68$ and
$M/3-((n-1)\bmod2)n/4-28$, respectively; both are outside such a window.
These are known classical values. We first arrange compatible maxima
on their ranges, then treat the remaining strips without assuming any
formula for their classical maxima.

\begin{lemma}\label{lem:criticalknown}
Fix $D>0$. In the ranges \eqref{eq:criticalknownranges} with
$|m-M/6|\le Dn$, one has $\zl(m,n)=mn/3+O_D(m)$.
\end{lemma}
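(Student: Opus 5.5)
The upper bound $\zl(m,n)\le mn/3+O(m)$ needs nothing new. It follows from \eqref{eq:generaloriginalupper} together with $z(m,n)\le m+M=O(m)$, since $|m-M/6|\le Dn$ forces $m\asymp n^2$. So the whole task is the lower bound. For it we will exhibit, in each of the three ranges \eqref{eq:criticalknownranges}, a maximum base built in the frame of Section~\ref{sec:frame}. It must have $L=O_D(n)$ bare rows and typed row counts differing by at most five. Lemma~\ref{lem:frameaugmentation} then gives $T\ge mn/3-O_D(m)$.

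\textbf{Target counts.} In each range the classical value $z(m,n)$ is the floor of $A_4$, $B_\rho$ or $A_3$. So we must hit the fixed-cell count exactly, not merely up to $O(n)$. The frame has three adjustable parts.
\begin{itemize}
\item The bulk: $3B$ five-star groups, each either quads or traded for ten triples, changing (rows, cells) by $(5,10)$. Typed triples may further be split into pair rows, changing the counts by $(2,3)$.
\item The ordinary local budgets $P_i$.
\item The central group of order $v_c=3g_c+t$, with bounded $g_c\in[g,2g)$, which absorbs residues.
\end{itemize}
We first fix bounded groups $g=20w$ as in the bounded-group applications. The number of bare rows is then $q$ times a bounded local row count plus a bounded central count, which is $O(n)$.

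Next we choose how many star groups to trade, so that the total row count is $m$ up to a bounded residue. With all quads the bulk row count is about $15B\approx M/6\cdot$(a fraction near $1$). Each trade adds $5$ rows. The window $|m-M/6|\le Dn$ therefore needs only $O_D(n)$ trades, and these can be distributed cyclically among types $AB,BC,CA$ to keep the typed counts within five. In the $A_3$ range, where $m\ge u_n+40$ may exceed $M/6$ by up to $Dn$, we also split $O_D(n)$ triples into pair rows, again cyclically by type. The fixed-cell count is then determined by the rows. Each quad row contributes degree $4$ and each triple row degree $3$, and pair-disjoint coverage of $M$ pairs gives the count $(M+10m)/4$ or $M/3+2m$ exactly up to local corrections.

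\textbf{Local budgets.} Each ordinary budget $P_i$ (on $v+t$ columns, new--new pairs excluded) and the central budget $M_c$ are filled by $C_4$-free bases whose deficits sum, together with the bulk's, to exactly the slack in \eqref{eq:terminaldeficit}. For the ordinary budgets we use the dense decomposition theorem with $K_4$ or $K_3$, as the range requires. Since $P_i\equiv0\pmod 6$ and $v$ is large, the divisibility conditions can be arranged with the new columns attached. For the centre we use \eqref{eq:prescribeddegrees}: a linear hypergraph with prescribed point degrees, whose block sizes are mixed from $3,4$ (and, for $B_\rho$, the mixture reflected in $(a_\rho,b_\rho)$). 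The centre's leave and row count are then chosen so the global fixed count equals the cited floor. Because all local rows are bare and each stays within one index group, $C_4$-freeness and the bare-row condition of Lemma~\ref{lem:frameaugmentation} hold automatically.

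\textbf{Main obstacle.} The hard step is the exact arithmetic matching in the middle range $B_\rho$. There the cited extremal bases have a specific degree mixture and leave pattern depending on $\rho$, and we must reproduce the floor of $\Phi_\rho(M,m,n)$ while keeping all but $O(n)$ rows typed. I would try to place the entire $\rho$-dependent defect inside the central group, and the $(a_\rho,b_\rho)$ mixture of degrees there. This is done by solving a bounded integer system for the numbers of central rows of each degree, using \eqref{eq:prescribeddegrees} with degrees summed over the $O(1)$ central columns plus the $t$ new columns. If the central group is too small to absorb the linear-in-$n$ discrepancy, the fallback is to let a bounded fraction of ordinary groups also carry degree-$3$ and degree-$4$ mixtures. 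This still contributes only $O(n)$ bare rows, which is all Lemma~\ref{lem:frameaugmentation} needs.
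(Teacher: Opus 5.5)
Your upper bound is right. Your overall strategy also matches the paper: a maximum base in the frame of Section~\ref{sec:frame}, all local rows bare, then Lemma~\ref{lem:frameaugmentation}. The gap is in building the maximum bases: you never use rows of size five, and they are forced in two of the three ranges. The $A_4$ range has $m\le\ell_n-52<M/6$. For any base with $F=\lfloor(M+10m)/4\rfloor$, the identity $M-4F+10m=e+\sum_r(b_r-4)(b_r-5)/2$ allows only $O(1)$ leave edges and rows of size outside $\{4,5\}$; a triple already costs $1$. Counting rows and covered pairs then forces $\tfrac32(M/6-m)+O(1)$ five-rows. Your tools are bulk trades $(+5,+10)$, splits $(+2,+3)$, and size-$3$/$4$ mixtures in the centre or in some groups. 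They only produce row counts $m\ge M/6-O(1)$. But this range reaches down to $M/6-Dn$, and for $\rho\ne0$ it lies entirely at distance at least $2\rho n/15$ below $M/6$. The middle range has the same defect. Its fillers must interpolate between triples and five-rows. The paper uses disjoint triples plus the dual of a $5$-regular graph when $\rho=2$, and the dual of a cubic graph plus disjoint five-sets when $\rho=1$.

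Second, the $\rho$-dependent defect cannot be placed in the bounded-order centre, nor in a bounded fraction of the groups. Every column has pair degree $n-1$, which is not divisible by $3$ when $\rho\ne0$. Since the leave is bounded at these caps, all but $O(1)$ columns must meet a non-quad row. Correspondingly, in each ordinary budget the old degrees of $K_{v+t}-K_t$ are $v+t-1\equiv n-1\pmod 3$, so $K_4$ completion fails without a correction. You cannot arrange this by choosing sizes, because $t=n\bmod 60$ is fixed. Trades do not help either, since they change each point's triple degree by $3$.

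The missing idea is to put in \emph{every} ordinary group a bounded-degree linear family, then complete by dense $K_4$ decomposition. For $A_3$ this is triples with point degrees $\beta+3k_x$; for $A_4$ it is five-rows with degrees $\rho+3k_x$, both supplied by \eqref{eq:prescribeddegrees}; in the middle it is the graph-dual families. The increment $j$ moves the row count while keeping each filler exactly on its cap, and the bulk stays untraded. The centre is not built but taken as a classical maximum from \cite{chm-unbalanced} at consecutive row counts. Since the outside fixed count is an integer value of the same cap, adding the central floor gives exactly the global floor.

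Two smaller points. Your splits in the $A_3$ range break maximality: a split adds $(2,3)$ to rows and cells, while $M/3+2m$ rises by $4$. Balanced trades preserve $F-2m$ and would suffice there once the $\beta$-residue is corrected in the ordinary groups. Also, \eqref{eq:terminaldeficit} is the slack identity for $m\ge M$, not for these caps.
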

\begin{proof}
Use the bounded-group frame, with $w$ sufficiently large in terms of $D$.
Keep the bulk untraded and all local rows bare.

For the $A_3$ range, prescribe a linear triple family on the old ordinary
points with
\[
 T_3=\beta v/3+2j,\qquad 0\le j\le J:=\lceil2(D+1)v\rceil,
\]
and point degrees $\beta+3k_x$, where $\sum_x k_x=2j$ and the
nonnegative $k_x$ differ by at most one. For $A_4$ instead use
\[
 T_5=\rho v/5+3j,\qquad \sum_x k_x=5j,
\]
five-rows with point degrees $\rho+3k_x$.
Criterion~\eqref{eq:prescribeddegrees} supplies these families for large
$v$, with just the following omissions: when $\beta=0$, omit
$1\le j\le20$; when $\rho=0$, omit $1\le j\le25$.
The choice $j=0$ then means the empty family.
Indeed, in the zero-residue cases with average increment at most one,
the maximum point degree is three and the criterion requires respectively
$T_3\ge42$ and $T_5\ge78$. Above that range, or with positive residue,
the block count grows linearly in $v$ while its maximum degree is
$O_D(1)$, so the criterion holds uniformly.

Remove the prescribed pairs and $K_t$ from $K_{v+t}$.
Old residual degrees are divisible by three because
$2\beta\equiv\rho\equiv t-1\pmod3$; new degrees are $v$.
The edge count is divisible by six: $T_3$ is even and $T_5$ is a multiple
of three. The missing degrees are bounded, so dense $K_4$ decomposition
completes the fillers. Their row and fixed-cell counts satisfy
\[
\begin{array}{c|c|c}
 &m_o-P/6&F_o\\ \hline
 A_3&\beta v/6+j&P/3+2m_o\\
 A_4&-2\rho v/15-2j&(P+10m_o)/4
\end{array}
\]
and the displayed fixed-cell expressions are integers.

For $A_3$, use the 21 consecutive central row counts starting at
$c_3=\lceil M_c/6+\beta v_c/6+40\rceil$.
For $A_4$, use the 52 consecutive counts ending at
$c_4=\lfloor M_c/6-2\rho v_c/15-52\rfloor$.
They lie in the actual classical intervals for large fixed $v_c$.
For $J\ge51$, sums of $q$ members of
$\{0\}\cup\{h,\ldots,J\}$ give
$\{0\}\cup\{h,\ldots,qJ\}$ for $h=21$ or $26$.
The central intervals bridge the first omitted counts and the step-two
parity in $A_4$. Since $(M-M_c)/6=15B+qP/6$ is integral, this gives
every integer from $\lceil u_n+40\rceil$ through that value plus $qJ$,
and from $\lfloor\ell_n-52\rfloor-2qJ$ through
$\lfloor\ell_n-52\rfloor$, respectively.
These cover the requested side ranges because $qv\ge n/2$ eventually.
Adding each central maximum to the integer outside cap expression gives
exactly the global cap floor.

For the middle range the ordinary nonquad families are as follows:
\[
\begin{array}{c|c|c|c|c|c}
 \rho&T_3&T_5&j&m_o-P/6&F_o-2P/3\\ \hline
 2&20w-10j&12j&0\le j\le2w&10w-13j&20w-30j\\
 1&40w-10j&3j&0\le j\le4w&20w-7j&40w-15j
\end{array}
\]
For $\rho=2$, take disjoint triples and the dual of a simple
5-regular graph on $T_5$ vertices, on disjoint supports.
For $\rho=1$, use the dual of a simple cubic graph on $T_3$ vertices
and disjoint five-sets. A graph dual has its edges as points and the
edges incident to a vertex as one block; simplicity makes it linear.
The required regular graphs exist whenever their order is positive:
a cycle and its opposite matching suffice for degree three; for degree
five add the edges of differences $\pm2$ to that construction.
Their positive orders here are at least ten and twelve, respectively.
The supports use exactly $v$ points. Removed pair degrees are $2$ or $8$
when $\rho=2$, and $4$ when $\rho=1$. The pair counts are divisible by
six. Thus $K_4$ completion gives the table.

In both rows $F_o=\Phi_\rho(P,m_o,v)$ is integral, and the offsets run
from $-2\rho v/15$ to $\beta v/6$ in steps of 13 or 7.
Use the full central interval
\[
 [\lceil M_c/6-2\rho v_c/15+24\rceil,
   \lfloor M_c/6+\beta v_c/6-120\rfloor].
\]
The sum of the $q$ ordinary progressions has the same step.
The central interval exceeds that step for large $w$, so adding it to the ordinary
progression covers exactly the middle range in
\eqref{eq:criticalknownranges}.
The outside fixed count is the integer
$\Phi_\rho(M-M_c,m-m_c,qv)$; adding the central floor gives
$\lfloor\Phi_\rho(M,m,n)\rfloor$ because $qv+v_c=n$.
The frame ensures pair-disjointness, so all these bases are maximum.
There are $O_D(n)$ bare rows and equal typed row counts.
Lemma~\ref{lem:frameaugmentation} completes the proof.
\end{proof}

\begin{lemma}\label{lem:criticalstrips}
Uniformly for sufficiently large $n$, one has $\zl(m,n)=mn/3+O(m)$
on the remaining strips
\[
 [\ell_n-52,\ell_n+24],\quad[u_n-120,u_n+40]
 \quad(\rho=1,2),
\]
and $[M/6-52,M/6+40]$ when $\rho=0$.
\end{lemma}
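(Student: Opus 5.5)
The strips have bounded width, so the plan is to reduce them to Lemma~\ref{lem:frameaugmentation}. For each $m$ in a strip we construct, inside the bounded-group frame of Section~\ref{sec:frame}, a base that is provably maximum. It should have $O(n)$ bare rows and typed row counts differing by at most five. The difficulty is that no formula for $z(m,n)$ is assumed on the strips. We therefore cannot match a known value and must certify maximality directly. The natural certificate is the identity~\eqref{eq:terminaldeficit} together with its degree-$k$ analogue from Lemma~\ref{lem:classicalprofile}. We write
\[
 kF=M+\tfrac{k(k+1)}2m-\Bigl(L+\sum_r\tfrac{(b_r-k)(b_r-k-1)}2\Bigr),
\]
so a base is maximum once its weighted deficit is minimal among all $C_4$-free bases with these dimensions. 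Since the frame is pair-disjoint, the total deficit is the sum of the bulk deficit, the ordinary-group deficits and the central deficit.

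\emph{Step 1 (transfer of the strip to the center).} First we fix the outside part: the untraded bulk, all rows of degree four, and the ordinary groups filled as in Lemma~\ref{lem:criticalknown}. Among the families there, we choose one whose outside row count places $m-m_o$ in a fixed, bounded-width window. This window is chosen so that the center, of bounded order $v_c$, must absorb the strip. The number of choices is finite and the strips have width $O(1)$, so only finitely many central parameters $(v_c,m_c)$ occur. For each of them we take an actual maximum $C_4$-free $m_c\times v_c$ base as the central filler. Its rows are bare, and there are $O(1)$ of them.

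\emph{Step 2 (maximality of the glued base).} This is the main obstacle. We must show that no $C_4$-free $m\times n$ base beats outside-optimal plus central-optimal. The first attempt is a linear-programming style bound. We take a convex combination $\lambda\Phi_{A_4}+(1-\lambda)\Phi_{A_3}$, or the appropriate $B_\rho$, of the weighted deficit functionals. Its coefficients are chosen so that every row degree contributes a nonnegative penalty, and the bound evaluates to the glued count up to an additive $O(1)$ that is tight because both pieces are extremal for that same functional. If exact tightness fails, the fallback is weaker: we use the known floors at the strip endpoints, supplied by~\eqref{eq:criticalknownranges}, together with the monotonicity $z(m+1,n)-z(m,n)\in\{1,\dots,n\}$. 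The fallback aims to show that a near-maximum base with deficit $O(1)$ still satisfies the hypotheses of Lemma~\ref{lem:frameaugmentation}. We would then weaken that lemma to bases with $z(m,n)-F=O(1)$, since its proof uses only $F=O(m)$ and the frame structure. One caveat: $\zl$ requires exact maximality, so the first route is the one we must complete. The plan is to make the central filler an exhaustively optimal bounded block, so that any improvement would have to occur in a bounded local budget, where it is excluded by the choice of that filler.

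\emph{Step 3 (augmentation).} The resulting base has $O(n)$ bare rows. The typed counts come from the untraded bulk, which gives exactly $5B$ quads per type, so they are equal. Lemma~\ref{lem:frameaugmentation} then gives $T\ge mn/3-O(m)$. Combined with~\eqref{eq:generaloriginalupper} and $F=O(m)$, this yields $\zl(m,n)=mn/3+O(m)$ uniformly on the strips, for $\rho=0,1,2$ alike.
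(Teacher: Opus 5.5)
Your Step~2 has a genuine gap, and that step is the whole content of the lemma. Your displayed identity only restates that maximizing $F$ is equivalent to minimizing the weighted deficit, so by itself it certifies nothing. The splitting of the deficit into bulk, ordinary and central contributions holds only for bases assembled in the frame, not for an arbitrary competitor.

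To certify your glued base you would need $z(m,n)\le F_{\rm out}+z(m_c,v_c)$. A convex combination of the caps $A_4,A_3,B_\rho$ cannot give this, for three reasons:
\begin{itemize}
\item it is an upper bound never smaller than the minimum of the caps, and that minimum is exactly the value not known to be attained on these strips;
\item agreement ``up to an additive $O(1)$'' is not exactness;
\item the claimed tightness presupposes that the central maximum attains the same functional, which it need not.
\end{itemize}
Your sentence that any improvement ``would have to occur in a bounded local budget'' is precisely what has to be proved, since a competing base need not respect the frame's budgets. The fallback is, as you note, inadmissible for $\zl$.

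The missing idea is to reverse the direction. Start from an actual maximum base at the target $m$ and rebuild it inside the frame with the same row-size counts and leave. It then has the same $F=z(m,n)$, so maximality is inherited and no formula for $z(m,n)$ is needed. The argument runs as follows.
\begin{itemize}
\item The known floors at the strip endpoints, plus $O(1)$ appended singleton rows, give this maximum a bounded deficit in both neighboring caps.
\item By \eqref{eq:criticalconvexdeficits}, the leave and the rows outside $\{4,k\}$ are then bounded, where $k=5$ at a lower switch and $k=3$ at an upper one.
\item For $\rho=1,2$, the modular expansion with $S_j(x)\equiv2\pmod3$ forces exactly $\nu\in\{1,2\}$ incidences in $k$-rows at every point outside a bounded set $X$.
\item When $\nu=2$, the $k$-rows form a simple graph whose edges are the outside points. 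Replace it by a minimum-order graph with the same marked profile and the same order modulo $k+1$, plus disjoint copies of $K_{k+1}$. This yields a bounded core and bounded normal components. The same conclusion is immediate when $\nu=1$, and for $\rho=0$ the two identities together already bound all nonquad structure.
\item The core goes to the center and the components fill the ordinary groups. Dense $K_4$ decomposition then restores exactly the original number of quads, after which Lemma~\ref{lem:frameaugmentation} applies.
\end{itemize}

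In hindsight, the central part of that rebuilt base is itself a central maximum. So a glued base of your shape is maximum for suitable central parameters, but only this compression argument identifies those parameters and proves maximality.
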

\begin{proof}
Start with any actual maximum base, with $F=z(m,n)$ cells.
We will preserve its row-size counts and leave while rearranging its
nonquad rows into components of bounded order. The common frame can then
rebuild all quads around those components, giving another maximum base
whose bulk rows admit the augmentation. This avoids needing a formula
for $z(m,n)$ inside the strips.

The largest integer not exceeding the left endpoint of each strip has
a known classical maximum by
\eqref{eq:criticalknownranges}. Appending $O(1)$ singleton rows gives
a base at the target row count within $O(1)$ of both neighboring real
caps. Thus the maximum itself has bounded deficits in those two caps.
Indeed, the neighboring caps agree at the switch, and their slopes and
the strip widths are fixed. All constants here are therefore absolute.

Let $\mathcal L$ be its leave graph of uncovered point pairs, with $e$
edges, and let $b_r$ be its row sizes. The exact identities are
\begin{equation}\label{eq:criticalconvexdeficits}
 M-4F+10m=e+\sum_r\frac{(b_r-4)(b_r-5)}2,
 \quad
 M-3F+6m=e+\sum_r\frac{(b_r-3)(b_r-4)}2.
\end{equation}
At a lower switch the first is bounded; at an upper switch the second
is bounded. Consequently the leave, the number of rows outside
$\{4,5\}$ or $\{3,4\}$, and their total incidences are bounded.
This includes empty and singleton rows.

Suppose first that $\rho\in\{1,2\}$. For the modular cap put $j=1$ if
$\rho=2$ and $j=2$ if $\rho=1$, and
write $[z]_3$ for the least nonnegative residue of $z$ modulo three.
At each point define
\[
 S_j(x)=j\deg_{\mathcal L}(x)
          +\sum_{r\ni x}[j(b_r-1)]_3\equiv2\pmod3.
\]
Thus $S_j(x)\ge2$. With
\[
 \delta_j(b)=(9+2j)\binom b2+90-(36+3j)b-b[j(b-1)]_3,
\]
the bounded modular deficit has the exact expansion
\[
 (9+2j)M+90m-(36+3j)F-2n
 =9e+\sum_r\delta_j(b_r)+\sum_x(S_j(x)-2).
\]
Each $\delta_j(b)$ is nonnegative. Check $0\le b\le5$ directly;
for $b\ge6$ use
$2\delta_j(b)\ge(9+2j)b^2-(85+8j)b+180>0$.
Hence only boundedly many points have $S_j(x)>2$.

Mark the set $X$ of all such points, all exceptional nonquad-row
supports, and all leave endpoints. It is bounded. Put $k=5$ at a lower
switch and $k=3$ at an upper switch; thus exceptional rows have sizes
outside $\{4,k\}$. The coefficient of a $k$-row in
$S_j$ is one or two, whereas a quad has coefficient zero. Every $S_j(x)$
is bounded because $\sum_x(S_j(x)-2)$ is bounded and nonnegative termwise.
Summing $S_j(x)$ over the bounded set $X$ therefore bounds the number of
$k$-row incidences at $X$, and hence the number of $k$-rows meeting $X$.
Outside $X$ there is no leave edge or exceptional row, and $S_j(x)=2$.
Every such point consequently has exactly
$\nu$ incidences in $k$-rows, where
\[
 (\rho,\text{switch},k,\nu)
 =(2,\text{lower},5,2),\ (1,\text{lower},5,1),\
 (2,\text{upper},3,1),\ (1,\text{upper},3,2).
\]
All original quads will be rebuilt; their incidences at $X$ need not be
bounded.

If $\nu=1$, the $k$-rows meeting $X$ have private points outside $X$.
Together with the exceptional rows and leave they form a bounded core;
all other nonquad components are disjoint $k$-sets.
If $\nu=2$, encode each outside point as an edge joining its two
$k$-rows. There are no loops, and two row-vertices cannot have parallel
edges because the original base is $C_4$-free. This is therefore a simple
graph. Mark the boundedly many row-vertices
meeting $X$, recording their subsets $X_r\subseteq X$.
Their graph degrees are $k-|X_r|$; other vertices have degree $k$.
Forbid an edge between marked rows already sharing a point of $X$.
Such an edge would give those rows a second common point. Conversely,
simplicity and these forbidden adjacencies ensure that no two reconstructed
$k$-rows share two points. They also prevent every new collision with
the retained structure: exceptional rows and leave edges lie on $X$,
where the subsets $X_r$ remain unchanged.

Consider these marked profiles up to relabeling of $X$ and the marked
row-vertices. A profile records all subsets $X_r$, the exceptional-row
multiset, and the leave on $X$. The bounds already proved on their sizes
and multiplicities show that only finitely many profiles occur; empty
exceptional rows are retained in the multiset as well.
For each feasible profile and residue of the total number of $k$-rows
modulo $k+1$, choose a graph satisfying these restrictions of minimum
order $R_0$. If the original graph has order $R$, it is a candidate for
that same profile and residue. Thus $R_0\le R$ and $R-R_0$ is divisible
by $k+1$. Replace the original graph
by the chosen graph and $(R-R_0)/(k+1)$ disjoint copies of $K_{k+1}$.
Each added component has degree $k$ and no marked vertices, so all degree
and forbidden-adjacency requirements continue to hold. The marked data
and the total number $R$ of $k$-rows are unchanged. So is the outside
point count: points correspond to graph edges, and the handshake identity
gives
\[
 |V\setminus X|=\bigl(kR-\sum_{r\text{ marked}}|X_r|\bigr)/2.
\]
The maximum of the finitely many minimum orders $R_0$ is an absolute
constant. The chosen graph, $X$, the exceptional rows, and the leave
therefore form a bounded core. Each added complete graph becomes a
disjoint normal component under the inverse encoding. We have preserved
every nonquad row-size count and the leave, up to the common relabeling
of points. At points of $X$, all retained row incidences are unchanged;
at outside points, there are still exactly $\nu$ incident $k$-rows.
When $\rho=0$, adding \eqref{eq:criticalconvexdeficits} gives
$2e+\sum_r(b_r-4)^2=O(1)$; all nonquad rows and leave already have
bounded support, so they form the core without a graph replacement.
In this case take $X$ to be that support; the other points are isolated
in the retained structure.

The possible normal components have point count $p$ and covered-pair
count $e_0$ as follows:
\[
\begin{array}{c|ccccc}
 \text{component}&\text{isolated point}&\text{triple}&\text{five-set}
                    &\text{dual }K_4&\text{dual }K_6\\ \hline
 p&1&3&5&6&15\\
 e_0&0&3&10&12&60
\end{array}
\]
Each case uses only one of these component types outside its core.
Choose the constant frame size $v=60w$ sufficiently large for all cores.
Put $v/p$ normal components in each ordinary old group and the core
plus the remaining components centrally. This uses exactly $n$ points:
if the core has $b$ points, then $n-b$ and $v_c-b$ are multiples of $p$,
since $p\mid60$ and $n-v_c=qv$. Increase $w$ to ensure $v_c\ge b$.
Thus there are enough complete normal components to fill the ordinary
groups, and the remainder fits the center exactly. Relabel the points
accordingly; all $t$ new columns belong to the center. The leave and every
nonquad row are now contained in one local budget of the frame.

Let $H$ be the pair graph of the retained nonquad rows together with
the leave. Linearity makes these edge sets disjoint. It has bounded
maximum degree, and preserves the original
conditions
\[
 n-1-\deg_H(x)\equiv0\pmod3,
 \qquad M-|E(H)|=6Q,
\]
where $Q$ was the number of original quad rows.
For the degree congruence, the retained pair degree at a point of $X$
is unchanged, and an outside point has retained pair degree
$\nu(k-1)$; when $\rho=0$, outside points have degree zero.
For the edge count, preserving the leave and every nonquad row size
preserves $|E(H)|$.
In an ordinary group the graph $K_{v+t}-K_t-H_i$ has degrees divisible
by three and edge count $P-(v/p)e_0$ divisible by six.
Indeed, old residual degrees are congruent to the corresponding global
residual degrees, new degrees are $v$, and both $P$ and $(v/p)e_0$ are
multiples of six for every component in the table.
Its missing degrees are bounded. Centrally the degree congruence follows
from $v_c\equiv n\pmod{60}$, and edge divisibility follows by subtracting
the ordinary residual budgets and the $90B$ bulk pairs from $6Q$.
The core alone need not be divisible: it is the central residual graph
that satisfies the decomposition conditions. Choose $w$ large enough
for the fixed missing-degree bounds and all decomposition thresholds.
Dense $K_4$ decomposition then completes both ordinary and central graphs.

The resulting base has exactly the original leave and nonquad row counts.
Together with the bulk quads, these decompositions cover exactly the
$6Q$ remaining pairs, so the quad count is exactly $Q$. The base thus
has precisely $m$ rows and $F=z(m,n)$ cells, and is maximum.
The $15B$ bulk quads are equally divided among the three types.
Every local budget has bounded order and there are $O(n)$ groups, so
the local rows number $O(n)$; keep them all bare.
Lemma~\ref{lem:frameaugmentation} proves the result.
\end{proof}

\begin{theorem}\label{thm:criticallinear}
For every fixed $D>0$, uniformly for sufficiently large $n$ and
$|m-M/6|\le Dn$, one has $\zl(m,n)=mn/3+O_D(m)$.
\end{theorem}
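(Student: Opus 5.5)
The theorem should follow by covering the window $|m-M/6|\le Dn$ with the two preceding lemmas, plus an upper bound that needs no maximum-base structure. The upper estimate is immediate. By \eqref{eq:generaloriginalupper}, $\zl(m,n)\le z(m,n)+mn/3+m/3+3n/4$. Here $z(m,n)\le m+M=O(m)$ because $m\asymp n^2$ in this window. So $\zl(m,n)\le mn/3+O(m)$, and all the work is in the lower bound $\zl(m,n)\ge mn/3-O_D(m)$.

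For the lower bound we split the window according to $\rho=(n-1)\bmod3$ and the ranges \eqref{eq:criticalknownranges}. Take $n$ large enough that the fixed linear window $|m-M/6|\le Dn$ lies inside the domain where the Chen--Horsley--Mammoliti floors are valid. Then every integer $m$ in the window falls into one of two cases.
\begin{itemize}
\item It lies in one of the known ranges: $m\le\ell_n-52$, $\ell_n+24\le m\le u_n-120$ (for $\rho\ne0$), or $m\ge u_n+40$. Lemma~\ref{lem:criticalknown} then gives $\zl(m,n)=mn/3+O_D(m)$.
\item It lies in one of the complementary strips $[\ell_n-52,\ell_n+24]$ and $[u_n-120,u_n+40]$ when $\rho\in\{1,2\}$, or $[M/6-52,M/6+40]$ when $\rho=0$. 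Lemma~\ref{lem:criticalstrips} then gives $\zl(m,n)=mn/3+O(m)$ with absolute constants.
\end{itemize}

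For $\rho=0$ we must check that the strip fills exactly the gap between the two outer ranges. We have $\ell_n=u_n=M/6$, and the middle range is absent, so this holds. For $\rho=1,2$ the endpoints $\ell_n\pm$ and $u_n\pm$ are offset by the same constants in both statements, so the ranges and strips interlock with no gap. The only care needed is rounding at noninteger endpoints such as $\ell_n-52$, and it is harmless: each strip is a closed real interval whose endpoints coincide with those of the adjacent known range, so every integer is covered.

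The main point to verify, and the only real obstacle, is uniformity. Lemma~\ref{lem:criticalknown} requires $n$ large depending on $D$, through the frame parameter $w$ and the validity of the cited classical values in the window. Lemma~\ref{lem:criticalstrips} requires only absolute largeness. We take $n_0(D)$ to be the maximum of these thresholds and the implied constant to be the larger of the two $O(\cdot)$ constants. Together with the upper bound, this gives $\zl(m,n)=mn/3+O_D(m)$ uniformly for $n\ge n_0(D)$ and $|m-M/6|\le Dn$.
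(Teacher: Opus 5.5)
Your proposal is correct and is the paper's own argument: the paper proves Theorem~\ref{thm:criticallinear} in one line, observing that the ranges of Lemmas~\ref{lem:criticalknown} and~\ref{lem:criticalstrips} (closed intervals sharing endpoints, with $\ell_n=u_n=M/6$ when $\rho=0$) cover every integer row count in the window. Your separate upper bound via \eqref{eq:generaloriginalupper} is harmless but redundant, since both lemmas already give two-sided estimates through the last line of Lemma~\ref{lem:frameaugmentation}.
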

\begin{proof}
The ranges of Lemmas~\ref{lem:criticalknown} and
\ref{lem:criticalstrips} cover all integer row counts in the window.
\end{proof}

\begin{lemma}\label{lem:quantitativecritical}
There are absolute $\varepsilon,C_0,n_0>0$ such that, for $n\ge n_0$
and $2n\le d=M/6-m\le\varepsilon n^2$,
\[
 \left|\zl(m,n)-mn/3\right|\le C_0(m+nd).
\]
\end{lemma}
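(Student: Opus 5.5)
The bound has two sides, and I would prove them separately; both treat $d=M/6-m$ as a linear deficit parameter just below the critical row count.

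\emph{Upper bound for $\zl$.} I would rerun the proof of Lemma~\ref{lem:compactsubcriticalgap} with absolute constants. Since $m\ge M/6-\varepsilon n^2\ge n^2/16$ for small $\varepsilon$, that lemma already has absolute constants. It gives
\[
 \frac{mn}{3}-\zl(m,n)\le O(m)+\kappa n\left(\frac1{12}-\frac m{n^2}\right)^{\!-}\!\cdots,
\]
and more precisely
\[
 \frac{\zl(m,n)}{mn}\le\frac13-\kappa\left(\frac1{12}-\frac m{n^2}\right)+\frac Cn .
\]
Because $\zl$ is bounded above by $mn/3+O(m)$ through \eqref{eq:generaloriginalupper} and $F=O(m)$, the inequality $\zl-mn/3\le C_0 m$ is immediate. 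So this direction of the absolute-value bound needs nothing more.

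\emph{Lower bound for $\zl$.} The content is $mn/3-\zl(m,n)\le C_0(m+nd)$, which calls for a maximum base whose augmentation loses only $O(m+nd)$. I would use the frame of Section~\ref{sec:frame} together with Lemma~\ref{lem:frameaugmentation}. That lemma gives $T\ge mn/3-O(m+nL)$, so it suffices to build a maximum base with $L=O(n+d)$ bare rows (then $nL=O(n^2+nd)=O(m+nd)$) and typed row counts differing by at most five.

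In the range $d\ge 2n$, i.e.\ $m\le\ell_n-52$ roughly, the classical maximum is $\lfloor A_4\rfloor$ by \eqref{eq:criticalknownranges}. This holds once $m$ stays above the other endpoint $(M+\cdots)/10+68$, which is true for $d\le\varepsilon n^2$ with $\varepsilon<1/6-1/10$. The maximum base has quad rows plus five-rows, and the five-rows carry the deficit: each trade in the bulk of a five-star group for ten triples is the wrong direction, so instead I would place the five-row family in the local budgets. I would prescribe five-row linear families in the ordinary groups as in the $A_4$ part of Lemma~\ref{lem:criticalknown}, but with $j$ now unbounded. The total number of five-rows is about $\rho n/5+3\cdot(15d/2)/\ldots=\Theta(n+d)$, since each five-row in place of quads lowers the row count by a fixed fraction. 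These local rows are bare, numbering $O(n+d)$, while the bulk quads stay evenly typed.

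The main obstacle is that with $d$ up to $\varepsilon n^2$, the five-row families in each group have unbounded degrees $\Theta(d/n)$. Bounded-group $K_4$ completion then fails: the missing degrees are no longer bounded, and the minimum-degree condition of the dense decomposition theorem must be checked. To handle this I would switch to groups of size $g\asymp\sqrt{\varepsilon}\,n$ or larger, so that the removed degree $O(d/q)$ stays below $\delta_4 v$ when $\varepsilon$ is small. I would then apply \cite[Lemma 3.5]{chm-unbalanced} with $R=O(d/n)$ and $b\asymp d/q$, which meets \eqref{eq:prescribeddegrees} once $v$ is large relative to $R$. Divisibility is arranged as before by choosing $j$ in steps and bridging residues with the central group. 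Exact row counts come from varying the central interval as in Lemma~\ref{lem:criticalknown}, and the fixed-cell count equals $\lfloor A_4\rfloor$ because the outside cap expression is integral. Maximality and Lemma~\ref{lem:frameaugmentation} then finish the lower bound.
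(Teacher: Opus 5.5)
Your easy direction is fine: $\zl(m,n)-mn/3\le C_0m$ follows from \eqref{eq:generaloriginalupper} together with $F\le m+M=O(m)$, exactly as in the paper. The detour through Lemma~\ref{lem:compactsubcriticalgap} is unnecessary, and your display from it points the wrong way: that lemma bounds $mn/3-\zl$ from below, not from above. Your outline of the hard direction also matches the paper: linear five-row families in the ordinary local budgets, $K_4$ completion, a central classical maximum, attainment of the universal cap $\lfloor(M+10m)/4\rfloor$, and Lemma~\ref{lem:frameaugmentation}. Attaining that cap gives maximality directly, so you do not need \eqref{eq:criticalknownranges} outside its linear window.

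The gap is your choice of group size. In the frame, every row filling a local budget is bare. That includes all quads produced by the $K_4$ completion, not only the five-rows. Hence $L\approx\sum_iP_i/6+M_c/6\asymp ng$. With $g\asymp\sqrt\varepsilon\,n$ or larger you get $L\asymp\sqrt\varepsilon\,n^2$, and Lemma~\ref{lem:frameaugmentation} then yields only a loss $O(m+nL)=O(\sqrt\varepsilon\,n^3)$. At $d\asymp n$ the required error is $O(n^2)$, so the argument fails whenever $d=o(\sqrt\varepsilon\,n^2)$. Your claim that the local rows number $O(n+d)$ counted only the five-rows. Separately, the removed pair degree at a point is about $4\cdot5Y/n=30d/n$ with $Y=3d/2$, independent of $q$; the quantity $d/q$ is the number of five-rows per group.

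The missing idea is to let the group size track the deficit. The paper sets $f=Y/n\ge3$ and $g=20\lceil K(f+1)/20\rceil\asymp K(1+d/n)$ for a large absolute $K$. This choice gives the following:
\begin{itemize}
\item each ordinary group receives $Y_i\asymp fg$ five-rows with prescribed degrees at most $12f$, so \eqref{eq:prescribeddegrees} holds because $2R_i(1+4R_i)\le 2Kf^2\le Y_i$;
\item the missing degree, at most $49f$, is at most $49/(3K)<\delta_4$ times the group order;
\item the group ratio $2g/s=O(K\varepsilon+K/n)$ is below $\delta_5$ once $\varepsilon<\delta_5/(100K)$;
\item $L\le ng\le 3Kd+2Kn=O(n+d)$.
\end{itemize}
With this choice you must still check that the forced central row count $m_c=m-m_{\rm out}$ lies in the Chen--Horsley--Mammoliti $A_4$ range. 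That check again uses $K$ large, because $2Y_c/3\le 14v_c^2/(27K)$ is small compared with $M_c/15$.
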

\begin{proof}
Choose a fixed integer $K$ sufficiently large for the dense $K_4$ and
classical order thresholds, with $K\ge10^6$ and $49/(3K)<\delta_4$.
Choose $\varepsilon<\min\{1/48,\delta_5/(100K)\}$ and then $n_0$
sufficiently large in terms of these constants.
Put $Y=(M-6m)/4=3d/2$, $f=Y/n\ge3$, and
\[
 g=20\lceil K(f+1)/20\rceil,\qquad K(f+1)\le g\le2K(f+1).
\]
Partition the indices into $q$ ordinary groups of size $g$ and one
central group $g_c\in[g,2g)$. Set $v=3g$ and $v_c=3g_c+t$.
The maximum group ratio is at most
\[
 \frac{2g}{s}\le\frac{24K(f+1)}n
 \le36K\varepsilon+24K/n<\delta_5.
\]
Thus the dense $K_5$ decomposition and untraded bulk exist, even when
$d/n^2$ is a fixed positive number.

Let $y=Yg/s$. Choose each ordinary target $Y_i$ as the lower or upper
multiple of three adjacent to $y$, rounding the number of upper choices
so that
\[
 |Y_i-y|\le3,\qquad
 \left|\sum_iY_i-qy\right|\le3/2.
\]
Consequently $Y_c=Y-\sum_iY_i$ satisfies
$|Y_c-Yg_c/s|\le3/2$. No integrality of $Y$ or $Y_c$ is assumed.

For an ordinary group prescribe five-row degrees $\rho+3k_x$, with
balanced nonnegative $k_x$ of total $(5Y_i-\rho v)/3$.
This is integral, and the targets satisfy
\[
 2fg\le Y_i\le6fg+3,\qquad R_i:=\max_x(\rho+3k_x)\le12f.
\]
In particular
$2R_i(1+4R_i)\le1176f^2\le2Kf^2\le Y_i$.
Criterion~\eqref{eq:prescribeddegrees} gives exactly $Y_i$ linear five-rows.
After removing their pairs and $K_t$ from $K_{v+t}$, the degree and edge
congruences are those of the $A_4$ fillers above. Missing degree is at
most $\max\{48f,59\}\le49f$, whose ratio to the order is at most
$49/(3K)<\delta_4$.
The order exceeds the fixed decomposition threshold. Thus $K_4$ completion
provides the integer counts
\[
 m_i=(P_i-4Y_i)/6,\qquad F_i=(P_i+10m_i)/4.
\]

For the center, the bounded total rounding error gives
\[
 2fg_c\le Y_c\le7fg_c,\qquad
 Y_c/v_c\ge f/2,\qquad Y_c/v_c^2\le7/(9K).
\]
Its required row count $m_c=m-m_{\rm out}$ is integral and obeys
$m_c=M_c/6-2Y_c/3$. It lies in the actual classical range
\[
 \frac{M_c+((1-v_c)\bmod4)v_c}{10}+68
 \le m_c\le M_c/6-2\rho v_c/15-52.
\]
Indeed $2Y_c/3\ge v_c$ proves the upper inequality for large $v_c$.
For the lower inequality, use
$M_c/15\ge v_c^2/31$,
$2Y_c/3\le14v_c^2/(27K)$, and
$3v_c/10+68\le v_c^2/1000$; the chosen large $K$ makes the first
quantity exceed the sum of the other two. Also $v_c\ge12K$, so all
order thresholds hold. Choose a central maximum there.
The outside fixed count $(M-M_c+10m_{\rm out})/4$ is integral, so the
assembled base has exactly $m$ rows and $\lfloor(M+10m)/4\rfloor$ cells.
The universal convex bound proves it is maximum.

The number $L$ of bare rows satisfies
\[
 L\le\sum_i P_i/6+M_c/6\le ng\le3Kd+2Kn.
\]
Here the middle inequality follows from
$\sum g_i^2\le2gs$, $t<60$ and $g\ge60$.
The three typed classes have equal counts, and $m\ge n^2/24$ follows
from the chosen $\varepsilon$ and large $n$.
Lemma~\ref{lem:frameaugmentation} therefore gives
\[
 T\ge mn/3-O(m+nL)=mn/3-O(m+nd).
\]
The generic upper bound with
$F\le m+M=O(m)$ completes the proof.
\end{proof}

%% file: interior.tex
\subsection{All column residues in the interior}\label{sec:allinterior}
\begin{theorem}\label{thm:allinterior}
There are absolute constants $C,n_0$ such that $\zl(m,n)=mn/3+O(m)$
for every $n\ge n_0$ and every integer $n^2/12+Cn\le m\le n^2/6-Cn$.
\end{theorem}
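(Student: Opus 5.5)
The upper bound $\zl(m,n)\le mn/3+O(m)$ is already available: it follows from \eqref{eq:generaloriginalupper} together with $z(m,n)\le m+M=O(m)$ in this range. So the work is a matching construction. I will build, for every $m$ in $[n^2/12+Cn,\,n^2/6-Cn]$, a maximum base in the bounded-group frame of Section~\ref{sec:frame} with $O(n)$ bare rows and typed row counts differing by at most five. Lemma~\ref{lem:frameaugmentation} then gives $T\ge mn/3-O(m)$.

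In this interior range $M/3\lesssim m\lesssim M/2$, so Lemma~\ref{lem:classicalprofile} with $k=2$ applies, $z(m,n)=M/2+3m/2+O(n)$, and the extremal rows have sizes $2$ and $3$. The frame's bulk is built from typed quads, however, so I will trade quads rather than keep them. Each five-star group of bulk quads can be traded for ten typed triples, changing the counts by $(5,10)$. Each typed triple can be split into three typed pair rows, changing the counts by $(2,3)$. Both moves preserve the covered pairs and the types. Starting from the fully traded bulk (all triples, roughly $M/3$ rows) and splitting triples, the row count increases through every residue in steps of two. The cap $M/3+2m\cdot$-type bound is tight exactly when all rows have sizes $2,3$ and the leave is as small as the classical value permits. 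The split moves keep the fixed-cell count on the convex bound $F=(M+3m)/2$ minus the leave correction, so the assembled bases stay on the classical cap. I will perform the splits uniformly across the three types, keeping the typed counts within five of each other.

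The key steps are as follows.

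\begin{enumerate}
\item Fix the frame with bounded ordinary groups of size $g=20w$ and a central group.
\item Trade every bulk five-star group to triples, and fill each ordinary local budget $K_{v+t}-K_t$ with a triple system plus pair rows. This is available by dense $K_3$ decomposition after removing a prescribed linear family to fix the degree congruences, as in Lemma~\ref{lem:criticalknown}, but now using $K_3$.
\item Handle the center with an actual maximum base of the right bounded order, borrowing its exact classical value. For $k=2$, $z$ is known exactly via pair/triple packings with leaves depending on $n\bmod 6$; I cite the packing number of $K_n$ with triples.
\item Choose the number of split triples in the bulk, spread evenly over the three types, to hit $m$ exactly. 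The central interval absorbs the parity and residue gaps, as the central intervals did in Lemma~\ref{lem:criticalknown}.
\end{enumerate}

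The local budgets have bounded order, and there are $O(n)$ groups, so $L=O(n)$.

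The main obstacle is maximality. The assembled base must equal $z(m,n)$ exactly, not just up to $O(n)$, for every residue of $n$ and every $m$ in the range. This means matching the exact leave structure that the classical $k=2$ formula requires: the leave must be forced by the degree parity when $n$ is even and by the triple-divisibility condition otherwise. My first attempt would be to place the whole unavoidable leave and all odd-degree corrections in the central group, which is permitted to have an arbitrary leave. Then every ordinary and bulk point has even pair degree in the outside structure. I would then verify that the outside fixed count is exactly $(P_{\rm out}+3m_{\rm out})/2$ and that adding the central maximum recovers the global floor. The margin $Cn$ away from $n^2/12$ and $n^2/6$ ensures that enough triples remain to split, or to keep unsplit, while the center stays inside its own $k=2$ interval.
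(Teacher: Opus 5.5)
\textbf{There is a genuine gap: you have put the interval in the wrong classical regime.} Since $M=\binom n2=n^2/2-n/2$, the range $n^2/12+Cn\le m\le n^2/6-Cn$ is $M/6+O(n)\le m\le M/3-O(n)$. This is the $k=3$ interval $M/\binom42\le m\le M/\binom32$ of Lemma~\ref{lem:classicalprofile}, not the $k=2$ one. Extremal rows there have sizes $3$ and $4$. The exact maximum, by \cite[Theorem~1.3]{chm-unbalanced}, is $\lfloor M/3+2m\rfloor$, the cap coming from $\binom b2\ge3b-6$.

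The cap $(M+3m)/2$ you aim for exceeds $(M+6m)/3$ by $(M-3m)/6>0$, so it is unattainable. Indeed, any base with rows of size at most three has $F\le3m<M/3+2m$ here. Your construction fails in two concrete ways:
\begin{itemize}
\item The fully traded frame already has $30B+O(n)=M/3+O(n)$ rows, and splits only increase the row count. So you never reach the row counts below $M/3-O(n)$ that make up the target interval.
\item In the identity $M-3F+6m=e+\sum_r(b_r-3)(b_r-4)/2$, every pair row contributes $1$. The pair rows in your fillers and each split (which adds three) therefore push the base strictly below $z(m,n)$, and maximality is lost.
\end{itemize}

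\textbf{What works instead (the paper's route).} Keep a mixture of quads and triples rather than pairs and triples.
\begin{itemize}
\item \emph{Bulk.} Trade only $j\in\{0,\dots,3B\}$ five-star groups, balanced among the types. Quads and triples both satisfy $\binom b2=3b-6$, so each trade moves $(m,F)$ by $(5,10)$ along the cap. The bulk then runs from $15B\approx n^2/12$ to $30B\approx n^2/6$ rows.
\item \emph{Ordinary fillers.} Triangle-decompose $K_{v+t}-K_t$ after removing $\epsilon v/4$ disjoint quads on the old columns, with $\epsilon=1$ exactly when $n$ is even, to make the residual degrees even. Use quads for this, not pair rows. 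This gives $F_o=2m_o+P/3$, on the cap.
\item \emph{Centre.} Use maxima at the five consecutive row counts $\lfloor M_c/4\rfloor+z$, $0\le z\le4$. These lie in the range of \cite[Theorem~1.3]{chm-unbalanced}, with value $\lfloor M_c/3+2r\rfloor$, and they fill the residues left by the step-five trade progression.
\end{itemize}
The outside count $2m_{\rm out}+P_{\rm out}/3$ plus the central floor is then exactly $\lfloor M/3+2m\rfloor$. The rest of your plan — the frame, $O(n)$ bare rows, balanced typed counts, and Lemma~\ref{lem:frameaugmentation} — then goes through unchanged.
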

\begin{proof}
Use the bounded-group frame of Section~\ref{sec:frame}, choosing $w$
sufficiently large once.

For an ordinary filler let $\epsilon=1$ if $n$ is even and $0$ otherwise.
Place $Q=\epsilon v/4$ disjoint quads on its old columns, then remove
these pairs and $K_t$ from $K_{v+t}$. Old residual degrees are
$v+t-1-3\epsilon$, new degrees are $v$, and the edge count is $P-6Q$.
These are respectively even and divisible by three. The missing degrees
are bounded, so dense triangle decomposition gives a filler with
\[
 m_o=P/3-Q,\qquad F_o=P-2Q=2m_o+P/3.
\]

Put $c_0=\lfloor M_c/4\rfloor$. For large fixed $w$, all five integers
$c_0,\ldots,c_0+4$ lie in the classical range
\[
 \frac{M_c+\beta v_c}{6}+40\le r
 \le\frac{M_c}{3}-\frac{\alpha v_c}{4}-28,
 \quad \beta=(1-v_c)\bmod3,\quad\alpha=(v_c-1)\bmod2.
\]
Thus \cite[Theorem 1.3]{chm-unbalanced} supplies central maxima with
$F_c=\lfloor M_c/3+2r\rfloor$ at these row counts.

Trade any $j\in\{0,\ldots,3B\}$ bulk groups, balanced among the three
color types, and choose $r=c_0+z$, $0\le z\le4$. The total row count is
\[
 m=15B+5j+qm_o+c_0+z.
\]
Consequently every integer from $a=15B+qm_o+c_0$ to $a+15B+4$ occurs.
Their endpoints are $n^2/12+O(n)$ and $n^2/6+O(n)$, uniformly in $n$,
which gives the theorem's interval after increasing $C$.

The outside pair budget is $P_{\rm out}=90B+qP=M-M_c$, and
$F_{\rm out}=2m_{\rm out}+P_{\rm out}/3$ is an integer.
Adding the central maximum therefore gives exactly
$\lfloor M/3+2m\rfloor$, the universal bound from
$\binom b2\ge3b-6$. The assembled base is maximum.
The $O(n)$ bare rows and typed counts within five allow
Lemma~\ref{lem:frameaugmentation}.
\end{proof}

%% file: even_transition.tex
\subsection{Even columns in a linear transition window}\label{sec:eventransition}
The classical value below is due to Chen, Horsley, and Mammoliti
\cite[Theorems 1.1--1.2]{chm-triple}. We construct maximum bases with
admissible augmentations.

\begin{theorem}\label{thm:eventransition}
Fix $D>0$. For every sufficiently large even $n$ and every integer $m$
with $|m-n^2/6|\le Dn$, one has
\[
 z(m,n)=\min\left\{
 \left\lfloor\frac{M+6m}{3}\right\rfloor,
 \left\lfloor\frac{6M+24m-n}{14}\right\rfloor,
 \left\lfloor\frac{M+3m}{2}\right\rfloor\right\},
 \qquad \zl(m,n)=\frac{mn}{3}+O_D(m).
\]
\end{theorem}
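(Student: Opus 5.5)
The formula for $z(m,n)$ is the classical result of Chen, Horsley, and Mammoliti \cite[Theorems 1.1--1.2]{chm-triple}, specialised to even $n$ and to the window $|m-n^2/6|\le Dn$. We quote it and check only that this window lies inside their range for large $n$. The three caps then come from convex deficit identities. The first is $\binom b2\ge3b-6$, as in Theorem~\ref{thm:allinterior}. The third is $\binom b2\ge2b-3$. The middle cap is the modular one: for even $n$ each point has odd degree $n-1$ in the pair graph, so it cannot be covered by triples alone. The substance is the second claim. By \eqref{eq:generaloriginalupper} and $F\le m+M=O(m)$, the upper bound $\zl\le mn/3+O(m)$ is immediate. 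It remains to build, for every $m$ in the window, a maximum base that admits $T\ge mn/3-O_D(m)$. We do this with the bounded-group frame of Section~\ref{sec:frame} and Lemma~\ref{lem:frameaugmentation}, arguing as in Lemma~\ref{lem:criticalknown} and Theorem~\ref{thm:allinterior}.

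\textbf{Construction.} Fix $w$ large in terms of $D$ and take the bounded-group frame. The bulk consists of $15B$ typed quads, equally split among the three types, and any number of five-star groups may be traded for triples. Each ordinary group gets a filler whose rows are mostly triples, plus a controlled number of \emph{pair rows} or quads. These extra rows absorb the odd point degrees, since $v+t-1$ is odd when $n$ is even.

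The three regimes are handled as follows.
\begin{itemize}
\item Above the transition (the $\lfloor(M+3m)/2\rfloor$ cap), the extra rows are pair rows. Each uses one point pair and its point degrees form a perfect matching on the old points.
\item Below it (the $\lfloor(M+6m)/3\rfloor$ cap), quads are used as in Theorem~\ref{thm:allinterior}.
\item In the middle range (the $\lfloor(6M+24m-n)/14\rfloor$ cap), use a mixture analogous to the tables in Lemma~\ref{lem:criticalknown}. Point residues are balanced by a linear family of pair rows and quads, supplied by \eqref{eq:prescribeddegrees} or by explicit disjoint supports, and the remaining graph is completed by dense triangle decomposition \cite{densedecomp}.
\end{itemize}
For each filler we would tabulate $m_o-P/3$ and $F_o$ and check that the fixed-cell count meets the relevant cap with equality, as an integer.

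\textbf{Covering every $m$ and checking maximality.} Choose a central group whose row counts range over a short interval of consecutive integers. This interval must lie in the classical range, for the central order $v_c$, of whichever cap is active. The ordinary progressions, summed over $q$ groups, together with the bulk trades (steps of $5$) must then cover every integer row count in the window. The cap expressions are affine in $(M,m,n)$ and their outside parts are integers, so adding the central maximum gives exactly the global floor. This shows the base is maximum. There are $O(n)$ bare rows, all local, and the typed counts differ by at most five. Lemma~\ref{lem:frameaugmentation} then gives $T\ge mn/3-O_D(m)$.

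\textbf{Main obstacle.} The hard step is the arithmetic at the switches between caps. The residue classes of the ordinary progressions and the central intervals must fit together with no gaps, and the floors in the middle cap must come out exactly. If the central classical ranges leave short strips uncovered, as happened in Lemma~\ref{lem:criticalstrips}, we would fall back on that lemma's strategy. Start from an arbitrary maximum base, which has bounded deficit in the neighbouring caps. Compress its non-triple rows and its leave into a bounded core. Then rebuild all the triples with the frame, so that the rows and fixed-cell count are preserved and the bulk becomes augmentable.
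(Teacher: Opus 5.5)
Your outline matches the paper's: you quote \cite{chm-triple}, use the bounded-group frame with triangle-completed ordinary fillers whose quads or pair rows absorb the odd degree $v+t-1$, take central classical maxima, and finish with Lemma~\ref{lem:frameaugmentation}. The gap is in the covering step. You rely on bulk trades (step $5$) to move the row count, but bulk quads are tight only for the lower cap.
\begin{itemize}
\item Against $\lfloor(M+3m)/2\rfloor$, each quad costs $1/2$, by $M+3m-2F=L+\sum_r(b_r-2)(b_r-3)/2$.
\item Against the middle cap, an untraded five-star group costs $10/7$. The weight $6\binom b2+24-14b$ is $4$ for a quad and $0$ for a triple, and each point of the group lies in two of its quads, so these quads earn no parity credit.
\end{itemize}
Hence once $m$ exceeds $\ell_n=M/3-n/4$, the bulk must be fully traded and no step-$5$ moves remain. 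Your upper filler is rigid as written: exactly $v/2$ pair rows forming a perfect matching. The central group has bounded order, so above $u_n=M/3+n/3=n^2/6+n/6$ you reach only $O_D(1)$ row counts. For $D>1/6$, the window extends $(D-1/6)n$ further.

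What is missing is a row-count parameter inside each filler that keeps equality in the active cap.
\begin{itemize}
\item \textbf{Upper range.} The paper takes $v/2+3j$ pair rows with prescribed odd degrees $1+2k_x$, realized by \eqref{eq:prescribeddegrees}. Each step moves $(m_o,F_o)$ by $(2,3)$, and the central counts $\lceil u_{v_c}\rceil$ and $\lceil u_{v_c}\rceil+1$ supply parity. Splitting fully traded bulk triples into pair rows would also work.
\item \textbf{Middle range.} Equality forces each old point into exactly one even row, so the quads and pair rows must partition the old points. A prescribed-degree family with some degree at least $3$ is lossy there. The parameter is the mix $Q=3j$, $R=v/2-6j$, a progression of step $7$, matched by the central interval $[\lceil\ell_{v_c}\rceil,\lfloor u_{v_c}\rfloor]$ of at least seven counts.
\item \textbf{Lower range.} Your plan does work: disjoint quads as in Theorem~\ref{thm:allinterior}, balanced bulk untrades, and five consecutive central counts ending at $\lfloor\ell_{v_c}\rfloor$. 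This is because quads and triples are both tight for $(M+6m)/3$. The paper instead fully trades the bulk and uses $v/4+j$ quads with odd incidences.
\end{itemize}
Your fallback to Lemma~\ref{lem:criticalstrips} is unnecessary. \cite{chm-triple} covers all row counts $\binom x2/6+x/3+40\le r\le\binom x2$, including the crossings, so no uncovered strips arise. The fallback would not fit the upper range anyway, since there the pair rows number $\Theta(n)$ and do not form a bounded core.
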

\begin{proof}
Use the bounded-group frame, with $w$ large in terms of $D$; here $t$ is even.
Fully trade the bulk, obtaining $30B$ typed triples, equally divided
among the types. Its pair budget is $90B$.

For an even order $x$ put
\[
 \ell_x=\frac{\binom x2}{3}-\frac x4,
 \qquad u_x=\frac{\binom x2}{3}+\frac x3.
\]
The classical formulas in the theorem apply in order below $\ell_x$,
between $\ell_x,u_x$, and above $u_x$. At an integral crossing the two
adjacent expressions agree. The central instances used below lie above
$\binom{x}{2}/6+x/3+40$ and below $\binom{x}{2}$ for sufficiently large
$x$, as required by the cited theorems. In particular they include the
crossings themselves, rather than the smaller interior range of
\cite[Theorem 1.3]{chm-unbalanced}.

Let $J=\lceil2(D+1)v\rceil$. The following ordinary fillers use $Q$ quad
rows, $R$ pair rows, and otherwise triples:
\[
\begin{array}{c|c|c|c|c}
 &Q&R&m_o&F_o\\ \hline
\text{lower}&v/4+j&0&P/3-v/4-j&P-v/2-2j\\
\text{middle}&3j&v/2-6j&P/3+v/3-7j&P+v/2-12j\\
\text{upper}&0&v/2+3j&P/3+v/3+2j&P+v/2+3j
\end{array}
\]
Here $0\le j\le J$ in the side rows and $0\le j\le v/12$ in the middle.
For the lower row prescribe quad incidences $1+2k_x$, with the $k_x$
nonnegative and differing by at most one, and
$\sum_x k_x=2Q-v/2$. For the upper row similarly prescribe pair
incidences with $\sum_xk_x=R-v/2$.
Their maximum degrees are $O_D(1)$ and their positive block counts are
at least $v/4$. Thus \eqref{eq:prescribeddegrees} realizes each family
when $v$ is sufficiently large. In the middle simply partition the old
points into the stated quads and pairs, since $4Q+2R=v$.

Remove these pairs and $K_t$ from $K_{v+t}$. The removed degree at every
old point is odd, and $6Q+R$ is divisible by three. The residual degrees
are therefore even, including the new degrees $v$, and its edge count
is divisible by three. Its missing degrees are $O_D(1)$, so dense triangle
decomposition proves all entries of the table.

The outside baseline $A_0=30B+qP/3=(M-M_c)/3$ is an integer.
In the middle, ordinary offsets fill a progression of step seven from
$-qv/4$ to $qv/3$. The central integer interval
$[\lceil\ell_{v_c}\rceil,\lfloor u_{v_c}\rfloor]$ has at least seven
members. Its addition fills exactly
$[\lceil\ell_n\rceil,\lfloor u_n\rfloor]$.
Below, use one central row count $\lfloor\ell_{v_c}\rfloor$;
above, use $\lceil u_{v_c}\rceil$ or $\lceil u_{v_c}\rceil+1$.
These give every integer respectively in
\[
 [\lfloor\ell_n\rfloor-qJ,\lfloor\ell_n\rfloor],
 \qquad
 [\lceil u_n\rceil,\lceil u_n\rceil+2qJ+1].
\]
All central values are the known adjacent-cap maxima.
The three intervals have no integer gaps. Since $qv\ge n/2$ eventually,
they cover the requested window.

For $P_{\rm out}=M-M_c$, the outside fixed counts in the three branches
are respectively the integers
\[
 2m_{\rm out}+P_{\rm out}/3,\qquad
 \frac{6P_{\rm out}+24m_{\rm out}-qv}{14},\qquad
 \frac{P_{\rm out}+3m_{\rm out}}2.
\]
Adding the corresponding central floor gives exactly the global classical
value. The middle expression counts only the $qv$ old ordinary columns;
the new columns are counted once in $v_c$. Thus every assembled base is
maximum. It has $O_D(n)$ bare rows, so
Lemma~\ref{lem:frameaugmentation} proves the augmented estimate.

For use in the high-row construction, at $m=\lceil(M+n)/3\rceil$ choose
$j=0$ in every upper filler and the center $\lceil u_{v_c}\rceil$.
Here $3\lceil u_{v_c}\rceil-M_c=v_c+2\epsilon$ with
$\epsilon\in\{0,1\}$, so the central A2 expression is integral.
Indeed $M_c+v_c=v_c(v_c+1)/2$ is congruent to zero or one modulo three.
Equality forces a pair-complete base with only pairs and triples.
The full base therefore has $n/2+\epsilon$ pair rows.
\end{proof}

%% file: odd_transition.tex
\subsection{Odd columns in a linear transition window}\label{sec:oddtransition}
\begin{theorem}\label{thm:oddtransition}
Fix $D>0$. For every sufficiently large odd $n$ and every integer $m$
with $|m-n^2/6|\le Dn$, one has $\zl(m,n)=mn/3+O_D(m)$.
\end{theorem}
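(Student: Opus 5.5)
The proof will follow the pattern of Theorem~\ref{thm:eventransition}: build maximum bases on the bounded-group frame of Section~\ref{sec:frame} and then apply Lemma~\ref{lem:frameaugmentation}. The upper estimate $\zl(m,n)\le mn/3+O(m)$ comes directly from \eqref{eq:generaloriginalupper} together with $F\le m+M=O(m)$. The work is therefore the lower bound. For every integer $m$ with $|m-n^2/6|\le Dn$ we need a maximum base with $O_D(n)$ bare rows and typed row counts within five of each other.

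For odd $n$ the frame has $t$ odd, so $v_c=3g_c+t$ is odd and the ordinary groups have $v=60w$ even old columns. We fully trade the bulk into $30B$ typed triples. The ordinary groups can reuse the three even-order filler families from the proof of Theorem~\ref{thm:eventransition} (quads plus triples, quads plus pairs plus triples, and pairs plus triples). Their parity checks involve only the old ordinary degrees $v+t-1$ and the new degrees $v$. Since $t$ is now odd, $v+t-1$ is even, and the removed pair degrees have to be odd at old points to keep the residual degrees even. So the same prescriptions remain valid, possibly after swapping which residues are prescribed. Dense triangle decomposition then completes each ordinary filler with the tabulated row and fixed-cell counts.

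The key input is the odd-order classical profile near $M/6$. Here I would invoke the odd case of Chen--Horsley--Mammoliti~\cite{chm-triple}, whose caps near $n^2/6$ are $\lfloor(M+6m)/3\rfloor$ together with the appropriate modular and pair-type bounds for odd order. I would choose the central instance at its classical maxima, selecting the central row count so that the outside fixed count is an exact integer of the matching cap form. Adding the central floor then gives exactly the global cap, and the ordinary progressions plus a central interval of length at least the progression step cover every integer in the window. This mirrors how the middle, lower and upper branches were glued in the even case.

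The main obstacle is that the odd-order classical value inside the transition window may not be known in closed form on short strips, or may involve a different switching structure. If that happens, I would fall back on the strip argument of Lemma~\ref{lem:criticalstrips}. Start from an arbitrary maximum base and note that the identity $M-3F+6m=e+\sum_r(b_r-3)(b_r-4)/2$ and its pair-row analogue bound the leave and the exceptional rows. Compress the non-triple rows into a bounded core plus normal components (isolated points, pairs, or dual graphs) via the same minimum-order graph replacement. Place those components in the ordinary groups and the core centrally, then rebuild all triples by dense triangle decomposition, checking the degree congruence modulo two and the edge count modulo three. This preserves the row count and $F=z(m,n)$, so the rebuilt base is maximum without any formula for $z$. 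It keeps $O(n)$ local bare rows and balanced bulk types, and Lemma~\ref{lem:frameaugmentation} finishes the proof.
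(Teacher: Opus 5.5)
Your skeleton (bounded-group frame, fully traded bulk, a central classical maximum, then Lemma~\ref{lem:frameaugmentation}) matches the paper's. Two steps fail as written, however, and the fallback does not repair them.

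\textbf{The parity step is backwards.} With $t$ odd, the old degree $v+t-1$ in $K_{v+t}-K_t$ is even. The removed degree at each old point must therefore be \emph{even}, not odd as you state. All three even-case families remove an odd degree at every old point:
\begin{itemize}
\item quad incidences $1+2k_x$ remove $3(1+2k_x)$;
\item pair incidences $1+2k_x$ remove an odd amount;
\item the middle partition of the old points into quads and pairs removes $3$ or $1$.
\end{itemize}
Reusing them leaves odd residual degrees, and no triangle decomposition exists. The correct observation is simpler. For odd $t$, $K_{v+t}-K_t$ already has even degrees and $P\equiv0\pmod 3$, so every ordinary group can be a pure triple system with no prescribed family. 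The whole outside structure then sits exactly at the crossing, with $A=(M-M_{v_c})/3$ rows and $3A$ cells, where both caps shift by exactly $3A$.

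\textbf{The odd-order classical profile is missing.} You never identify it, and it is not a glued three-cap profile. For odd $v$ the cited values are
\[
z(r,v)=\min\{2r+a_v,\lfloor(M_v+3r)/2\rfloor\}-\delta_v(r-a_v),
\]
with a one-cell loss $\delta_v$ at a few bounded offsets that depend on $v\bmod 6$. There is no middle cap.

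Since $\zl$ demands $|E_1|=z(m,n)$ exactly, the assembled base is maximum only if the central part carries exactly the global loss. A central classical maximum taken from an arbitrary covering interval can yield $z(m,n)-1$ cells, so your ``adding the central floor gives exactly the global cap'' is unjustified here.

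The paper handles this as follows:
\begin{itemize}
\item It notes $v_c\equiv n\pmod 6$, since $60\mid n-v_c$, so the caps and the loss pattern transfer exactly.
\item It keeps the central offset in $[-9,3]$.
\item It realizes every other offset by untrading bulk groups (ten triples to five quads, a change of $(-5,-10)$ along the lower cap) or by splitting bulk triples (a change of $(2,3)$ along the upper cap).
\item It chooses the central offset so that its loss equals the global one. This is the global offset itself when $-4\le h\le0$, or when $h=1$ and $n\equiv5\pmod6$. Otherwise it is a loss-free offset, such as $k_c\in\{5,\dots,9\}$ below, or a central STS or $h_c\in\{2,3\}$ above.
\end{itemize}

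\textbf{The fallback does not apply.} The compression in Lemma~\ref{lem:criticalstrips} works only on bounded strips. There, bounded deficits in both neighbouring caps bound the leave and the exceptional rows, and they force a fixed number $\nu$ of non-quad incidences at every outside point. On a window of width $\Theta(Dn)$ only one cap deficit is bounded. A maximum base can then have $\Theta(n)$ quads (or pair rows), whose incidences at outside points are only forced to be even. Hence no bounded core plus normal components follows. The fallback is not needed anyway, because the odd-order values are available from Chen--Horsley--Mammoliti.
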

\begin{proof}
We use the classical values of Chen--Horsley--Mammoliti
\cite[Theorems 1.1--1.2]{chm-triple}. In our row--column convention,
put $M_v=\binom v2$, $a_v=\lfloor M_v/3\rfloor$, and
\[
 C_v(r)=\min\left\{2r+a_v,\;
                    \left\lfloor\frac{M_v+3r}{2}\right\rfloor\right\}.
\]
For sufficiently large odd $v$ and
$M_v/6+v/3+40\le r\le M_v$, those theorems give
\begin{equation}\label{eq:oddclassicalcited}
 \begin{gathered}
 z(r,v)=C_v(r)-\delta_v(r-a_v),\\
 \delta_v(h)=
 \begin{cases}
 1,&v\equiv1,3\pmod6,\ h\in\{-4,-3,-2,-1\},\\
 1,&v\equiv5\pmod6,\ h\in\{-3,-1,0,1\},\\
 0,&\text{otherwise}.
 \end{cases}
 \end{gathered}
\end{equation}
This range contains the stated global window for large $n$, and all
bounded central offsets used below once their orders are large.

For later use, if $v\equiv5\pmod6$ and $r=a_v+3$, every central
maximizer attains the unrounded two/three cap:
\begin{equation}\label{eq:oddcentralequality}
 F=(M_v+3r)/2.
\end{equation}
Use $3a_v=M_v-1$ in \eqref{eq:oddclassicalcited}. The exact identity
$M_v+3r-2F=L+\sum(b-2)(b-3)/2$, where $L$ counts uncovered pairs,
then forces pair completeness and only two- and three-rows. There are
exactly $(3r-M_v)/2=4$ pair rows. Likewise at $v\equiv1,3\pmod6$,
$r=a_v$, equality forces an STS: the same identity gives only two/three
rows, and their pair-row count is zero.

Use the bounded-group frame, choosing $w$ large enough for the cited
central values and dense decompositions.
Fully trade the bulk, giving $10B$ triple rows of each type.
Here $t$ is odd; write $v=v_c$ for the central order.

For each of the $q$ ordinary groups, the graph $K_{3g+t}-K_t$ has even
degrees, edge count $P=\binom{3g}{2}+3gt$ divisible by three, and bounded
missing degrees. Decompose it into triples. Centrally choose a maximum
base at the required offset $-9\le h\le3$ in
\eqref{eq:oddclassicalcited}. All these filler rows are bare and number
$O(n)$. Their old columns lie in one index group, and their pair sets
are disjoint by the frame construction.

Set $M=\binom n2$, $a=\lfloor M/3\rfloor$, and $h=m-a$.
Before any bulk modification, the rows outside the central group are
all triples, with count
\[
 A=(M-M_v)/3=a-a_v
\]
and fixed-cell count $3A=M-M_v$. Since $n-v=3gq$, we have $v\equiv n\pmod6$.
The two caps and their exceptional losses therefore transfer exactly:
\[
 C_n(A+r)=3A+C_v(r),\qquad
 \delta_n(A+r-a)=\delta_v(r-a_v).
\]
We realize every requested row count as follows.

If $h=-k\le-5$, choose $k_c\in\{5,6,7,8,9\}$ congruent to $k$
modulo five and use a central maximum base with $a_v-k_c$ rows.
Its loss is zero. Untrade $(k-k_c)/5$ bulk local groups, balanced among
the three types. Each untrade replaces ten triples by five quads,
changing $(m,F)$ by $(-5,-10)$ and preserving the lower cap and zero
loss. For $h=-4,-3,-2,-1,0$, simply take the central maximum base with
$a_v+h$ rows, transferring its value exactly as above.

For $n\equiv1,3\pmod6$ and $h\ge1$, start with a central STS at
$a_v$ rows. Split $\lfloor h/2\rfloor$ bulk triples into their three
pair rows, balanced among types, and add one bare singleton in the remaining
parity. This attains $C_n(m)=\lfloor(M+3m)/2\rfloor$.
For $n\equiv5\pmod6$, handle $h=1$ by its central maximum base.
For $h\ge2$, choose $h_c=2$ or $3$ of the same parity as $h$, use the
central maximum at $a_v+h_c$, and split $(h-h_c)/2$ bulk triples.
Each split changes $(m,F)$ by $(2,3)$ and preserves the upper cap and
zero loss. Only $O_D(n)$ bulk groups or triples are modified, whereas
the supply is $\Theta(n^2)$. All these operations preserve $C_4$-freeness:
untrades and splits preserve their original covered-pair sets, disjoint
from those of the central base. The resulting fixed count equals
\eqref{eq:oddclassicalcited} globally, hence the base is exactly maximum.

Balancing leaves typed counts within five and $O(n)$ bare rows,
so Lemma~\ref{lem:frameaugmentation} gives the augmented estimate.
\end{proof}

%% file: high_rows.tex
\subsection{All larger row counts}\label{sec:highrows}
\begin{lemma}\label{lem:highrows}
For some absolute $C_1,n_1$,
$\zl(m,n)=mn/3+O(m)$ uniformly for $n\ge n_1$ and
$m\ge n^2/6+C_1n$.
\end{lemma}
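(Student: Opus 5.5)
The upper bound is generic and I will not redo it: by \eqref{eq:generaloriginalupper} together with $z(m,n)\le m+M=O(m)$ for $m\ge n^2/6$, we get $\zl(m,n)\le mn/3+O(m)$. So the work is the lower bound. I plan to split the range $m\ge n^2/6+C_1n$ into three regimes.

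\emph{Regime $m\ge M$.} Every maximum base has $T^*(B)\ge\frac n3\min\{m,n^2\}-20m$ by Theorem~\ref{thm:universalpairwindow}. This already gives $mn/3-O(m)$ for $M\le m\le n^2$. For $m\ge n^2$ I will take a specific maximum base rather than $B_{\rm can}$, which is bad there. The base has one pair row per column pair, and its $S=m-M$ singleton rows are spread evenly over all $n$ columns. I then expect the proof of the lower bound in Theorem~\ref{thm:universalpairwindow} to work with the capacities rebalanced. The singleton rows are split into three types, each holding $S/3$ rows and anchored in its own two colors. They are matched across reciprocal fibers exactly as in the $S$-row/$P$-row matching there, but now $S$--$S$ pairs are allowed whenever both anchors lie in colors that forbid a (C3) witness. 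By the same type argument as in Lemma~\ref{lem:frameaugmentation}, every active row has support inside its two colors, so the witness argument carries over. Active row counts are then $m/3-O(n)$ per type, and we get $T\ge mn/3-O(m)$.

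\emph{Regime $n^2/6+C_1n\le m\le M$.} Here I use the bounded-group frame of Section~\ref{sec:frame}. Fully trade the bulk into typed triples, and then split bulk triples into typed pair rows, each split changing $(m,F)$ by $(2,3)$. Starting from the base at $m_0=\lceil(M+n)/3\rceil$ noted at the end of the proof of Theorem~\ref{thm:eventransition}, the high-row construction there (and its odd analogue from Theorem~\ref{thm:oddtransition}, using \eqref{eq:oddcentralequality}) gives a pair-complete base consisting only of pair and triple rows, with all local rows bare. Each split raises $m$ by one row per step pair and keeps the identity $F=(M+3m)/2$ up to rounding. The classical bound $F\le\lfloor(M+3m)/2\rfloor$, coming from \eqref{eq:criticalconvexdeficits}'s analogue $M+3m-2F=L+\sum(b-2)(b-3)/2$, shows the result is maximum. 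Parity is handled by one bare singleton. I balance splits among the three types, so typed counts differ by at most five and there are $O(n)$ bare rows, and then Lemma~\ref{lem:frameaugmentation} gives $T\ge mn/3-O(m)$. Continuing to split all $30B$ triples and then adding singleton rows reaches $m=M$ up to $O(n)$ and overlaps the first regime. The gap is filled by singleton rows, which keep maximality because $z(m,n)=m+M$ once $m\ge M$ and the leave is empty.

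The main obstacle is the joint between the regimes. Near $m\approx M$, splitting the whole bulk leaves the local budgets still as triples. A pair-complete base with $m$ slightly below $M$ must then contain $\Theta(n)$ triples in bare local rows, and the count $z$ there must still be matched exactly. I will first try to split the local triples too: they are bare, so their type is irrelevant. If that works, every row count up to $M$ is realized with only pair and triple rows, and maximality follows from the same $(2,3)$ cap. The second case I must check is $m\ge n^2$, namely that $S$--$S$ pairs in the spread-singleton base satisfy (C3). I expect that to work only when the singleton anchors of the two endpoints lie in the shared color, so I will constrain the anchors to lie there.
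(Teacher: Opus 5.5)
Your middle regime is the paper's proof once you commit to the option you list as a first try. After the typed triangles are exhausted, split the bare local triangles into bare pair rows. These stay inside one index group's budget, so Lemma~\ref{lem:frameaugmentation} still applies. A pair-complete pair/triangle base with $m_*=(M+2R_*)/3$ rows has exactly $(M-m_*)/2$ triangles, so this reaches every $m\equiv M\pmod 2$ up to $M$ with $F=(M+3m)/2$. Drop the alternative of filling the gap below $M$ with singleton rows. Adding $k$ singletons in place of $k/2$ splits gives $F=(M+3m-k)/2$, which falls below $z(m,n)=\lfloor(M+3m)/2\rfloor$ as soon as $k\ge2$. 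The identity $z(m,n)=m+M$ holds only once $m\ge M$, so there is no overlap to exploit below $M$.

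For $m\ge M$ you take a different and heavier route. Citing Theorem~\ref{thm:universalpairwindow} for $M\le m\le n^2$ is legitimate, since its proof does not use this lemma. Your rebalanced matching for $m\ge n^2$ also works, because allowing $S$--$S$ pairs removes the cap $2(P_0-n)$. However, your closing worry about anchors is misplaced, and the proposed fix would break the count.

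$S$--$S$ pairs are forbidden in Theorem~\ref{thm:universalpairwindow} only because singletons anchored in the removed columns $R$ may take any type, and two such endpoints could share a column of $R$. Once every singleton is fixed inside its own two colors, the type argument already excludes every witness. Take a pair $((r,A_i),(s,C_i))$ with $r$ of type $AB$ and $s$ of type $BC$. Their common occupied columns lie in $B$. A row occupying $A_i$, $C_i$ and a third column must be of type $CA$, since bare $P$-rows have only two occupied cells, and a $CA$ row has no cell in $B$. So where the anchor sits within its two colors is irrelevant.

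Conversely, requiring both anchors to lie in the shared color cannot hold for an $AB$ singleton in both of its fiber matches. The shared color is $B$ for its $A$-cells and $A$ for its $B$-cells. When $m/n^2\to\infty$ this would strand about half the singleton cells, for lack of $P$-row partners.

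The paper avoids this case split entirely. It appends the $m-M$ singleton rows to the all-pair frame base produced by your middle regime, with types assigned evenly and fixed cells in their own colors. It then applies Lemma~\ref{lem:frameaugmentation} directly for every $m\ge M$.
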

\begin{proof}
Use the bounded index-group construction above, with a fixed sufficiently
large group size. There is a pair-complete initial base consisting only
of triangles and pairs, with equal typed row counts and $O(n)$ bare rows
whose old columns lie in one index group.
For even $n$, use the matching construction of
Section~\ref{sec:eventransition}, with no quad and
$R_*=n/2+\epsilon$, where $0\le\epsilon\le1$ makes $R_*\equiv M\pmod3$.
For $n\equiv1,3\pmod6$, use the all-triangle construction of
Section~\ref{sec:oddtransition}. For $n\equiv5\pmod6$, use the central
maximizer in \eqref{eq:oddcentralequality}, which has only pairs and
triangles, with $R_*=4$.
In every case the initial row count is
\[
 m_*=(M+2R_*)/3=n^2/6+O(n),\qquad m_*\equiv M\pmod2.
\]
All initial pair rows are bare, so the typed triangle counts are equal.

For $m_*\le m\le M$ with $m\equiv M\pmod2$, split
$(m-m_*)/2$ triangles into their three pair rows. Split typed triangles
as evenly as possible among the three types, keeping their pair rows
in the same type; after exhausting them, split bare triangles.
The number of available triangles is exactly $(M-m_*)/2$, so the process
reaches $m=M$. All pairs remain covered once, and the fixed-cell count
$(M+3m)/2$ attains the upper bound from $\binom b2\ge2b-3$.
For the opposite parity, use the preceding row count and add one typed
singleton fixed at an old column of its type, attaining
$\lfloor(M+3m)/2\rfloor$.

For $m\ge M$, start with the resulting all-pair base and append $m-M$
singleton rows. Assign their two-color types as evenly as possible.
Give each singleton any fixed label in its own two old colors.
This attains $z(m,n)=m+M$ by \eqref{eq:terminaldeficit}.
The bare rows still number $O(n)$, and the typed row counts have bounded
range at most three. Their fixed cells use only their two old colors,
and bare rows retain the single-group old supports.
Lemma~\ref{lem:frameaugmentation} gives $\zl(m,n)=mn/3+O(m)$
throughout the range, including all added singleton rows.
Choose $C_1,n_1$ large enough that the stated range begins above the
initial row count and its first opposite-parity successor.
\end{proof}

%% file: uniform_synthesis.tex
\subsection{Assembly of the threshold bounds}
\begin{proof}[Proof of Theorem~\ref{thm:sharpasymptotic}]
Let $C_0$ be the interior constant and $C_1$ the high-row constant.
The interior construction, followed by the odd and even transition
windows with a fixed constant larger than $\max\{C_0,C_1\}+1$, covers
all $m\ge n^2/12+C_0n$ with error $O(m)$.
Theorem~\ref{thm:criticallinear}, with $D\ge\max\{C_0+1,2\}$,
covers the remaining fixed linear band
down to $M/6-2n$ with the same error order.
Below it, Lemma~\ref{lem:quantitativecritical} applies, since
\[
 2n\le d=M/6-m\le\varepsilon n^2,
 \qquad d\le(n^2/12-m)_+.
\]
These constructions bound $mn/3-\zl(m,n)$ from above on every
$m\ge(1/12-\varepsilon)n^2$. Take $\varepsilon<1/48$.
For $m\le n^2/12$, Lemma~\ref{lem:linearcriticalgap} bounds this
difference from below by
$(\kappa m/n)(n^2/12-m)-O(m)$, which is at least
$(\kappa/16)n(n^2/12-m)-O(m)$.
For larger $m$, the generic upper bound supplies the lower estimate
$-O(m)$. Taking the largest fixed constants and thresholds proves the theorem.
For fixed $D$, its error is $O_D(m)$ whenever $m\ge n^2/12-Dn$;
at $m=\binom n2$ it gives $n^3/6+O(n^2)$.
\end{proof}

\begin{proof}[Proof of Theorem~\ref{thm:threshold}]
Theorem~\ref{thm:subcriticalgap} gives the conclusion for $0<c<1/12$.
For $c\ge1/12$, divide Theorem~\ref{thm:sharpasymptotic} by $mn$.
Its error tends to zero because
$1/n+(n^2/12-m)_+/m\to0$.
\end{proof}

%% file: deficit_profile.tex
\section{Sharp dependence on the fixed-base deficit}\label{sec:deficitprofile}
How much of the quarter guarantee survives when fixed cells are lost?
For $m/n^2\to\infty$, the relevant scale for the deficit $D$ is $n^2$.
The next theorem gives the sharp limiting guarantee as a function of
$d=\lim D/n^2$. Its upper examples use a large fixed row to restrict where
selected pairs can lie.

\begin{theorem}\label{thm:deficitprofile}
Suppose $n\to\infty$, $m/n^2\to\infty$, and nonnegative integers $D$
satisfy $D/n^2\to d$, where $0\le d<\infty$. Every $C_4$-free
$m\times n$ base with $|B|=z(m,n)-D$ satisfies
\[
 \frac{T^*(B)}{mn}\ge\Psi(d)-o(1),\qquad
 \Psi(d)=
 \begin{cases}
  1/4,&0\le d\le1/8,\\
  \sqrt{2d}-2d,&1/8\le d\le1/2,\\
  0,&d\ge1/2.
 \end{cases}
\]
The error is uniform over the base. For every such sequence of
dimensions and deficits, there are eventually bases of the prescribed size with
$T^*(B)/(mn)\to\Psi(d)$.
\end{theorem}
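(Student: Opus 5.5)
The plan is to reduce both directions to the structure forced by the deficit identity \eqref{eq:terminaldeficit}. Since $m/n^2\to\infty$ we have $m\ge M$ eventually, so $z(m,n)=m+M$. For a base with $|B|=z(m,n)-D$ that identity reads
\[
 D=L+\sum_r\frac{(b_r-1)(b_r-2)}{2}.
\]
The deficit is therefore spent on three things: a leave of $L$ uncovered column pairs, empty rows (each costing one), and rows of degree at least three. A row of degree $b$ costs about $b^2/2$. Empty rows number at most $D=O(n^2)=o(m)$ and can be left bare at negligible cost. So the only nontrivial shapes are a leave graph with at most $dn^2$ edges and a family of large rows with $\sum b_r^2/2\le dn^2+o(n^2)$.

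I first rewrite the target profile as
\[
 \Psi(d)=\max\bigl\{y(1-y):0\le y\le 1-\sqrt{2d}\bigr\}.
\]
This shows that $\sqrt{2d}\,n$ is the critical number of columns: a clique leave, or a single fixed row, on $k$ columns costs $k^2/2\approx dn^2$ exactly when $k=\sqrt{2d}\,n$.

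\emph{Lower bound.} I would follow the terminal case in the proof of Theorem~\ref{thm:everymaximum}, using Lemma~\ref{lem:bareanchors}. The first step is to identify a column set $Y$ that is "clean": most pairs inside $Y$, or between $Y$ and its complement, are covered by degree-two rows. The second step is to choose disjoint column pairs among covered pairs, anchored at those degree-two rows, which gives $h(x)\le1$ away from the large rows. The third step is to pair rows greedily by intersecting supports, avoiding anchors, large rows and empty rows. The counting step is an extremal statement for graphs with few non-edges: a graph on $n$ vertices missing at most $dn^2$ pairs (plus the pairs inside large rows) should contain enough covered pairs of the right shape. I expect this to be an Erd\H{o}s--Gallai/Tur\'an-type estimate, showing that the bad column set has size at most $\sqrt{2d}\,n+o(n)$.

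The product $RC$ in Lemma~\ref{lem:bareanchors} then has to be turned into $y(1-y)mn$ rather than $mn/4$. I expect this to come from splitting rows into groups that use different column matchings between $Y$ and its complement, so that each row meets only the admissible fraction of columns. Optimizing over $|Y|=yn$ with $y\le1-\sqrt{2d}$ should give $\Psi(d)$. Uniformity would follow because every estimate depends only on $n$, $m$, $D$ and $C_4$-freeness.

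\emph{Upper examples.} For each $d$, take one pair row for every column pair outside a set $K$ of $k\approx\sqrt{2d}\,n$ columns, and one fixed row with support $K$. Add, if needed, a leave or empty rows to make the deficit exactly $D$. Fill all remaining rows with singletons placed at a suitably chosen column. Then I would copy the sharpness argument at the end of the proof of Theorem~\ref{thm:everymaximum}. Pairs meeting the $O(n^2)$ structured rows contribute $O(n^3)=o(mn)$. For a pair on singleton rows with columns $c,d$, the large row, or the common singleton column, supplies (C3) witnesses. This should force $e_c+e_d\le S+O(1)$ whenever $c,d$ lie in a restricted configuration, for example both in $K$ or both outside it. A weighted Cauchy--Schwarz split between the columns in $K$ and those outside it should then bound $T'$ by $\max_y y(1-y)\,mn$ restricted to the admissible $y$. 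When $d\ge1/2$, $K$ is all columns and everything collapses to $o(mn)$.

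The main obstacle is matching the two sides exactly. I must identify precisely which pairs of column classes are forbidden in the upper example, and dually prove that no base of deficit $D$ can block more than a $\sqrt{2d}$ fraction of columns. The graph-theoretic extremal step, allocating the deficit between leave and large rows, is where I would start, testing first the two pure cases: a clique leave and a single large row.
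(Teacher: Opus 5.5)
Your upper example is essentially the paper's: one blocker row on $\{1\}\cup S$, pair rows for all other column pairs, $D-\binom{b-1}{2}$ empty rows, and all singletons fixed at column $1$. This works only if the singletons' common column lies \emph{inside} the blocker's support. The forbidden configuration is then exactly ``both selected columns in $S$'' (witness: the blocker row and column $1$), not ``both outside $K$''. Together with $e_c+e_d\le m'+2$, this gives $E_S\le E/2$ and the split Cauchy--Schwarz bound $T'\le(m'+2)Np(1-p)$.

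The lower bound has a genuine gap: the mechanism you propose cannot reach $\Psi(d)$ for $1/8<d<1/2$. Test it on that same example with $|S|=pN$ and $p>1/2$.
\begin{itemize}
\item Every row pair of singletons has common column $1$, which the blocker contains. So the filter of Lemma~\ref{lem:bareanchors} kills every $S$-internal column pair. Indeed, the blocker with column $1$ is a genuine (C3) witness for any selected pair of singleton rows with both columns in $S$.
\item A matching avoiding $S$-internal pairs has at most $|Q|\approx(1-p)N$ usable edges.
\item In a checkerboard block structure, each row gets one selected cell per edge of its matching.
\end{itemize}
So however you split the rows into groups with different matchings, you get at most about $(1-p)mn/2<p(1-p)mn=\Psi(d)mn$ pairs. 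What is missing is a non-block construction. Split the singleton rows at that anchor into $X,Y$ with $|X|:|Y|=|Q|:|S|$, occupy all of $X\times S$ and $Y\times Q$, and pair the two pools by an arbitrary bijection. An $X$-row then carries $|S|>|Q|$ selected cells. (C3) holds because the two endpoint rows share only the anchor column, and every row through the anchor meets only one of $S$, $Q$.

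You have also placed the obstruction in the wrong object: the leave is harmless. A column pair with no common fixed row has empty full common neighborhood in a checkerboard structure, so it needs no anchor at all. Insisting on degree-two anchors actually loses. If the leave consists of all pairs meeting a set of $\epsilon n$ columns, those columns have no such anchors. Your matching then yields only about $(1-\epsilon)mn/4$, although $\Psi(d)=1/4$ there for small $\epsilon$.

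The correct viewpoint is local to each anchor column $y$. Keep all non-singleton rows and the heavy anchor columns bare. Then the only possible witness rows for a pair between two singleton rows at $y$ are fixed rows through $y$. Their supports are disjoint by $C_4$-freeness, so the constraint is a partition of the columns into parts of size at most $b_{\max}$. The deficit identity gives $(b_{\max}-1)(b_{\max}-2)/2\le D$ directly, with no Erd\H{o}s--Gallai or Tur\'an step.
\begin{itemize}
\item If the largest part has $k\le N/2$ columns, a cross-part perfect matching with checkerboard blocks gives $N/4$ pairs per row.
\item Otherwise the $X/Y$ construction gives $k(N-k)/N$ pairs per row.
\end{itemize}
Both equal $N\varphi(k/N)$ with $\varphi(p)=1/4-(p-1/2)_+^2$, and $\Psi(d)=\varphi(\min\{\sqrt{2d},1\})$. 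Anchor classes with fewer than $m/\sqrt n$ rows are handled by pairing rows with \emph{different} anchors, whose common neighborhood is empty.
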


The proof uses the nonincreasing, $1$-Lipschitz function
$\varphi(p)=1/4-(p-1/2)_+^2$ on $[0,1]$.

\begin{lemma}\label{lem:largerowdeficit}
Let $m\ge n^2$, let $B$ be $C_4$-free, and put
$\Delta=m+M-|B|\le n^2$ and $b_{\max}=\max_r b_r$.
Then
\[
 T^*(B)\ge mn\varphi(b_{\max}/n)-O(m\sqrt n+n^3),
\]
with an absolute implicit constant.
\end{lemma}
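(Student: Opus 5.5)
The plan is to build the augmentation directly by a weighted refinement of the bare-anchor scheme of Lemma~\ref{lem:bareanchors}, treating the large row as the only real obstruction. Let $r_0$ be a row of degree $b=b_{\max}=pn$ with support $S$. By \eqref{eq:terminaldeficit}, $\Delta\le n^2$ bounds the leave and the number and total squared degree of rows outside degrees one and two. So apart from $r_0$ and $O(n^2)$ exceptional incidences, the base consists of pair rows covering almost all column pairs and $m-M-O(n^2)$ singleton rows. All losses of size $O(n^2)$ rows or $O(n)$ columns cost $O(n^3)$ pairs, which is within the allowed error.

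\emph{Key obstruction.} By $C_4$-freeness, a column pair inside $S$ has $r_0$ as its unique common fixed row. Hence a selected pair whose columns lie in $S$ has the potential (C3) witness $(r_0,c)$ for every common occupied column $c$ of its two rows. Such a witness is harmless exactly when $c\notin S$. Column pairs meeting $S^c$ can instead be anchored by ordinary pair rows, so that $h\le1$ as in the proof of Theorem~\ref{thm:everymaximum}.

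\emph{Step 1: case $p\le1/2$.} Here I would copy the terminal case of Theorem~\ref{thm:everymaximum}. Take a uniformly random perfect matching of the columns and keep its edges covered by pair rows. Any edge inside $S$ forces $r_0$ as its anchor, so I would first re-randomize the matching to avoid $S\times S$ edges. This is possible since $|S|\le n/2$: match $S$ into $S^c$ and randomly match the rest. The result is $C=n/2-O(\sqrt n)$ column pairs with $h\le1$ and $R=m/2-O(n^2)$ singleton and pair row pairs, giving $mn/4-O(m\sqrt n+n^3)$. The $m\sqrt n$ term is the cost of the uncovered matching edges, which are controlled in expectation by the leave.

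\emph{Step 2: case $p>1/2$.} Note that $\varphi(p)=p(1-p)$ there, which suggests a two-level construction. Split the row pairs into those whose common column $c_i$ lies in $S$ and those with $c_i\notin S$. Rows of the second kind can also use column pairs inside $S$ anchored at $r_0$, since their witness $(r_0,c_i)$ is empty. Rows of the first kind can use only column pairs meeting $S^c$, of which at most $(1-p)n$ are disjoint. I would not insist on one global column matching as in Lemma~\ref{lem:bareanchors}. Instead I would give the two row groups different column matchings on disjoint sets of blocks. Matchings for different row groups can coexist, because (C3) witnesses for a fixed pair only involve its own two rows and two columns together with full-occupancy common neighbourhoods. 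The check then reduces to the fact that selected blocks are checkerboards and bare blocks add nothing, exactly as before.

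\emph{Main obstacle.} The singleton anchor columns are adversarial: they may all lie in $S$. In that case I would pair singleton rows at a column of $S$ with rows whose support avoids that column. Such row pairs have no common fixed column, so there is no witness at all, although selected cells create new common columns, and these must be controlled. The main difficulty is to arrange these cross pairings so that every added common column of a row pair lies outside $S$, while keeping the row-pair count near $m/2$. I expect the resulting optimization over the fraction of rows and columns routed through $S^c$ to produce precisely $p(1-p)$, balancing $(1-p)n$ usable columns against $p$-weighted row capacity. Verifying that this optimization is attained, rather than the weaker $(1-p)/2$ obtained from a single matching, is the step I would attack first, using the degree-moment bound \eqref{eq:degree-moment} as a sanity check for the target.
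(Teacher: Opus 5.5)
There is a genuine gap, and it is the step you defer. For $p>1/2$, your framework gives each row pair one checkerboard per column pair of a fixed column matching, as in Lemma~\ref{lem:bareanchors}. As you note yourself, this caps a row pair whose common column lies in $S$ at $(1-p)n$ selected pairs, i.e.\ density $(1-p)/2$ instead of $p(1-p)$.

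Your proposed escape is to pair rows anchored in $S$ with rows avoiding that anchor. This is unavailable in the decisive case: if essentially all singleton rows are fixed at one column $y\in S$, only $O(n^2)$ other rows exist. This happens in the blocker bases of Theorem~\ref{thm:deficitprofile}, where $p(1-p)$ is sharp.

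The missing idea is to abandon $2\times2$ blocks inside such an anchor group. The paper groups singleton rows by their fixed column $y$. For a heavy group ($g_y\ge m/\sqrt n$), the bare fixed rows through $y$ have disjoint supports that partition the kept column set $K$, with $|K|=N$. Suppose the largest part $S$ has $k_y>N/2$ columns, and put $Q=K\setminus S$. Split the group's rows into $X,Y$ with $|X|=\lfloor g_y|Q|/N\rfloor$. Fill $X\times S$ and $Y\times Q$, trim to equal sizes, and pair the two pools by an arbitrary bijection. This yields $g_yk_y(N-k_y)/N-N$ pairs.

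Condition (C3) is then checked entirely on the row side. Every selected pair joins a row of $X$ to a row of $Y$, and these share only the column $y$. No row through $y$, whether a group row or a bare pencil row, occupies both a column of $S$ and a column of $Q$. Each row now has many different partners, and that is what beats a single matching. When $k_y\le N/2$, an opposite-position matching of the parts gives $g_yN/4-O(N)$.

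Step~1 also fails as written. With $\Delta$ as large as $n^2$, the pair rows need not cover most column pairs. For example, a Steiner triple system plus singleton rows has $\Delta=M/3$, $b_{\max}=3$, and no pair rows at all. More generally, a uniform matching has $L/(N-1)$ expected leave edges, which can be of order $n$. So ``keep the edges covered by pair rows'' can leave $C$ far below $n/2$, and the $O(m\sqrt n)$ error cannot come from uncovered matching edges.

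The paper uses no column anchors here. Its error comes from keeping the at most $\sqrt n$ heavy columns bare, and from fewer than $m/\sqrt n$ unmatched light rows. Light singleton rows are greedily paired across different fixed columns. They receive checkerboards along one column matching of $K$, skipping blocks that contain a fixed cell. Each such row pair then has an empty full common column neighbourhood, so (C3) holds regardless of the leave, the pair rows, or any large rows.

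In particular, checkerboards create no new common columns, so the difficulty you flag for cross-pairings is not the real one. The real obstruction is the pencil at a heavy anchor column, not a single global large row $r_0$.
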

\begin{proof}
By \eqref{eq:terminaldeficit}, at most $\Delta$ rows are empty,
and at most $M$ have degree at least two. Leave these rows bare;
there remain $G=m-O(n^2)$ singleton rows.

Let $g_y$ count singleton rows fixed at column $y$, and set
$H=\{y:g_y\ge m/\sqrt n\}$. Then $|H|\le\sqrt n$.
Keep $H$ and at most one additional column bare, leaving a column
set $K$ of even size $N=n-O(\sqrt n)$. We may assume $n$ is large;
otherwise the claimed lower bound is trivial after enlarging the constant.

Fix $y\in H$ and consider its $g_y$ singleton rows. The supports in
$K$ of fixed rows through $y$ are disjoint by $C_4$-freeness.
Use the nonempty supports as parts, adding singleton parts for uncovered
columns. Let their largest size be $k_y\le\min\{b_{\max},N\}$.
If $k_y\le N/2$, list the parts consecutively and match opposite
positions at distance $N/2$. No matched columns lie in the same part.
Pair the singleton rows arbitrarily and fill every resulting block
with a checkerboard pair. This gives $g_yN/4-O(N)$ pairs.

If $k_y>N/2$, let $S$ be the largest part and put $Q=K\setminus S$.
Split the rows into sets $X,Y$ with $|X|=\lfloor g_y|Q|/N\rfloor$.
Occupy $X\times S$ and $Y\times Q$, retain the same number of cells
from each rectangle, and pair the two pools by any bijection. This gives
at least $g_y k_y(N-k_y)/N-N$ pairs, including zero when $Q$ is empty.

Both constructions satisfy the original conditions in their simultaneous
use for all $y\in H$. Opposite cells are empty and selected cells are
disjoint. The common occupied columns of the two endpoint rows are
exactly $\{y\}$. Any row occupying $y$ is either a singleton row in
this group or a bare fixed row through $y$: other active groups have
different fixed anchors, and no selected cell is added in $H$.
In the balanced construction a group row occupies at most one column
of each matched pair, while a bare row has its $K$-support in one pencil
part. In the unbalanced construction every such row has its occupied
$K$-columns in only one of $S,Q$. Thus no row occupying $y$ contains both selected
columns, excluding every (C3) witness across all groups.

For $p=b_{\max}/n$, monotonicity and the Lipschitz bound give
\[
 N\varphi(k_y/N)
 \ge N\varphi(\min\{b_{\max}/N,1\})
 \ge n\varphi(p)-O(\sqrt n).
\]
Summing the heavy groups therefore gives at least
$\sum_{y\in H}g_y n\varphi(p)-O(m\sqrt n+n^{3/2})$ pairs.

Greedily pair the remaining light singleton rows with different fixed
columns. All unmatched rows then have one fixed column, so fewer than
$m/\sqrt n$ remain. Use any one column matching of $K$ for all these
row pairs. Skip blocks containing a fixed cell, at most two per row
pair, and add a checkerboard pair in every other block.
Each paired set of rows has an empty full common column neighborhood:
the fixed anchors are distinct, their blocks are bare, and every other
block is a checkerboard. Hence (C3) is impossible regardless of all
other groups; (S) and (C2) hold as well. These rows contribute at least
$G_{\rm light}n/4-O(m\sqrt n)$ pairs.

Since $\varphi\le1/4$, the two contributions total at least
$Gn\varphi(p)-O(m\sqrt n+n^{3/2})$.
Using $G=m-O(n^2)$ proves the lemma.
\end{proof}

\begin{proof}[Proof of Theorem~\ref{thm:deficitprofile}]
Eventually $m\ge M$, so \eqref{eq:terminaldeficit} has left side $D$ and gives
\[
 (b_{\max}-1)(b_{\max}-2)/2\le D,
 \qquad b_{\max}\le(3+\sqrt{1+8D})/2.
\]
For $d<1/2$, Lemma~\ref{lem:largerowdeficit} applies eventually;
its normalized error is $O(n^{-1/2}+n^2/m)=o(1)$.
Monotonicity and continuity of $\varphi$ give the required lower bound.
For $d\ge1/2$, nonnegativity suffices.

For sharpness, choose $b\in\{2,\ldots,n\}$ as large as possible with
$\binom{b-1}2=(b-1)(b-2)/2\le D$, and set $w=D-\binom{b-1}2$.
Choose $S\subseteq\{2,\ldots,n\}$ with $|S|=b-1$.
Take one blocker row with fixed support $\{1\}\cup S$,
one pair row for every column pair not contained in that support,
$w$ empty rows, and fill the remaining rows with singletons at column $1$.
There are $P=M-\binom b2$ pair rows and
$m'=m-P-1-w$ singleton rows. If $b<n$ then $w<b-1$; otherwise
$w=O(n^2)$. Thus $m'\ge0$ eventually and $m'/m\to1$.
Each column pair is covered exactly once, so this base is $C_4$-free,
and its size is
\[
 b+2P+m'=m+M-\binom{b-1}2-w=z(m,n)-D.
\]

In any augmentation, at most $O(n^3)$ selected pairs meet the blocker,
pair rows, or empty rows, by (S). Retain the other $T'$ pairs, with
$E=2T'$ cells in the $m'$ singleton rows and $N=n-1$ nonanchor columns.
Write $e_c$ for their selected column degrees. For each retained pair,
(C3) with witness column $1$ allows at most two common selected rows.
Thus $e_c+e_d\le m'+2$, including repeated endpoint lines, and
\[
 \sum_c e_c^2\le E(m'+2)/2.
\]
The blocker row and column $1$ also give an all-fixed (C3) witness
whenever both endpoint columns lie in $S$. Consequently
$E_S:=\sum_{c\in S}e_c\le E/2$.

Put $p=(b-1)/N$. If $p\le1/2$, Cauchy--Schwarz gives
$T'\le(m'+2)N/4$. If $1/2<p<1$ and $E>0$, the same inequality on the two
column sets gives, using $E_S/E\le1/2$,
\[
 \sum_c e_c^2\ge\frac{E_S^2}{pN}
                    +\frac{(E-E_S)^2}{(1-p)N}
             \ge\frac{E^2}{4p(1-p)N}.
\]
The last quadratic is minimized at $E_S/E=1/2$ on the allowed interval.
Hence $T'\le(m'+2)Np(1-p)$. The case $E=0$ is immediate; if $p=1$,
the blocker forbids every retained pair, so again $T'=0$.
Restoring the discarded pairs proves
\[
 T^*(B)\le O(n^3)+(m'+2)N\varphi(p).
\]
The choice of $b$ gives $b/n\to\min\{\sqrt{2d},1\}$.
Dividing by $mn$ yields the matching upper limit $\Psi(d)$.
Also $|B|\le m+M=o(mn)$, so the same profile holds for the original
objective $|B|+T^*(B)$ with the base fixed.

At $b=n$ and $w=0$, the base consists of one full row and singleton rows
at column $1$, and admits no selected pair. Its size
$m+n-1\sim z(m,n)$ shows that relative closeness to the classical
maximum alone gives no augmentation guarantee.
\end{proof}

%% file: square_lower.tex
\section{A finite square construction}\label{sec:squarelower}

\begin{theorem}\label{thm:squarelower}
Let $q=2^k$, where $k\ge1$ is an integer, and put $N=q^2+q+1$. Then
\[
 \zl(N,N)\ge N(q+1)+\frac{q^3(q-2)}4\ge\frac{N^2}{4}.
\]
\end{theorem}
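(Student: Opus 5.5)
The plan is to take the plane incidence base of $\mathrm{PG}(2,q)$ and build an explicit augmentation of it. Rows are points and columns are lines, so $F=N(q+1)=z(N,N)$ and the base is maximum, as recalled in the proof of Theorem~\ref{thm:separation}. It then suffices to exhibit $T\ge q^3(q-2)/4$ admissible pairs. The final inequality is arithmetic:
\[
N(q+1)+\frac{q^3(q-2)}4=\frac{q^4}4+\frac{q^3}2+2q^2+2q+1,
\]
and this exceeds $N^2/4=\tfrac14(q^4+2q^3+3q^2+2q+1)$ by $\tfrac54q^2+\tfrac32q+\tfrac34>0$.

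For the augmentation I would use the even order through a hyperoval $H$: a set of $q+2$ points meeting every line in $0$ or $2$ points. It has $\binom{q+2}2$ secant lines and $\binom q2$ external lines, and each point of $H$ lies on $q+1$ secants.

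\begin{itemize}
\item[(1)] The points of $H$ are the bare anchor rows $A_j$ of Lemma~\ref{lem:bareanchors}. Through each anchor, the secant lines are paired so that each pair has common fixed row $A_j$. A secant lies on two anchors, so I would orient the secant graph on $H$, which is $K_{q+2}$, and pair lines at their assigned anchor, discarding at most one line per anchor.
\item[(2)] The non-hyperoval points form the row pairs $(P_i,Q_i)$, matched along external lines $c_i$. An external line contains no anchor, so $h(c_i)=0$ and the filter loses nothing.
\item[(3)] Where points or lines are left over, I would add further pairs along tangent-free structure. Failing that, I would refine the count directly rather than through the lemma's lower bound, since the target $q^3(q-2)/4$ is only slightly smaller than $RC$.
\end{itemize}

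Verification follows the proof of Lemma~\ref{lem:bareanchors}. Condition (S) holds because blocks are disjoint, and (C2) holds because checkerboard opposite cells are empty. For (C3), $C_4$-freeness makes the full common neighborhoods exactly $\{A_j\}$ and $\{c_i\}$, so the only possible witness is $(A_j,c_i)$. That cell is empty because $c_i$ misses $H$. I would also need to check that blocks containing a fixed cell are skipped, which costs at most $F$ blocks. This cost is already absorbed by the form $F+T\ge RC-\sum h$.

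The main obstacle is the exact count. Rows off $H$ number $q^2-1$, which is odd. Each external line carries $q+1$ such points, also odd. The column pairs at anchors lose parity leftovers too. So I must check that $RC-\sum_i h(c_i)$ really reaches $N(q+1)+q^3(q-2)/4$.

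First I would compute $R=(q^2-q)/2$ row pairs, using $q-1$ points on each of $q/2$ disjoint-in-points families of external lines. I would take $C$ close to $\binom{q+2}2/2$ secant pairs, supplemented by pairing external lines at a common non-hyperoval point whose row is left bare. Then I would compare the product with the target.

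If the lemma's bound falls short, I would instead fill whole point-pair by line-pair blocks with checkerboards directly and count the blocks meeting the base exactly. The incidence structure of the hyperoval determines this count: each block meets at most two fixed cells. This would recover the lower-order term $N(q+1)$ separately from the main term $q^3(q-2)/4$.
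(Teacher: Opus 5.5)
The opening of your proposal is fine: maximality of the plane base, and the identity $N(q+1)+q^3(q-2)/4-N^2/4=(5q^2+6q+3)/4$. There is a genuine gap, though: the proposed augmentation does not reach $T\ge q^3(q-2)/4$, and it fails exactly at the step you flag as the main obstacle.

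Every line through a point of $H$ is a secant, so bare anchors in $H$ can only pair secants. This gives $C\le\binom{q+2}2/2$, while $R\le(q^2-2)/2$. Hence $T\le RC\le(q^2-2)(q+1)(q+2)/8$, which is already below $q^3(q-2)/4$ for every $q\ge8$. At $q=4$ we have $R,C\le7$, and each point off $H$ lies on three secants. Skipping the blocks that contain fixed cells then leaves $T\le28<32$.

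Your remark that the target is only slightly below $RC$ presupposes $R,C\approx q^2/2$. That requires pairing the $\binom q2$ external lines as well. Since those lines miss $H$, this needs further bare anchors off $H$. Such anchors reduce $R$, and they lie on the very external lines you use as the $c_i$, so $h(c_i)=0$ is lost. Your step (3) gives no count showing this loss can be absorbed.

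The lemma's inequality alone cannot close the gap for any choice of pairs. One always has $RC\le(q^2+q)^2/4<N(q+1)+q^3(q-2)/4$, so $F+T\ge RC-\sum_i h(c_i)$ is too weak. You must compute $T$ exactly, as $RC$ minus the blocks containing fixed cells, and show there are only about $q$ such blocks per row pair. For a row pair $(P,Q)$, this forces the column pairing to match the lines through $P$ with the lines through $Q$, consistently at every anchor. A hyperoval provides no such consistency.

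The paper gets it from a characteristic-two translation. Fix $\Delta\ne0$, pair $P_{a,b}$ with $P_{a,b+\Delta}$, and pair $L_{s,t}$ with $L_{s,t+\Delta}$. Then all $q^2$ affine points and all $q^2$ nonvertical lines are paired, so $R=C=q^2/2$. The points at infinity stay bare; the anchors are the $I_s$, and the common columns are the vertical lines $V_a$, which miss every $I_s$, so $h\equiv0$. Each row pair meets exactly one column pair of each slope in fixed cells. Hence $T=\frac{q^2}{2}\,q\bigl(\frac q2-1\bigr)=\frac{q^3(q-2)}4$.
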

\begin{proof}
Pair counting and Cauchy--Schwarz give $F(F-N)\le N^2(N-1)$,
attained by the projective plane over $\mathbb F_q$ with $F=N(q+1)$.
Write its affine points as $P_{a,b}$, finite-slope lines as
$L_{s,t}:b=sa+t$, vertical lines as $V_a$, and points at infinity as
$I_s,I_\infty$. For $\Delta\ne0$, the characteristic-two translations
$P_{a,b}\leftrightarrow P_{a,b+\Delta}$ and
$L_{s,t}\leftrightarrow L_{s,t+\Delta}$ give disjoint row and column
pairs, taking one pair per orbit. Leave all points at infinity bare.
The common fixed column of each row pair is $V_a$; the common row
anchor of a slope-$s$ column pair is $I_s$, which is nonincident with
$V_a$. Thus the filter in Lemma~\ref{lem:bareanchors} always passes.
For each row pair and slope, precisely one column pair contains the
fixed intercepts $b-sa,b-sa+\Delta$; all other blocks are fixed-cell-free.
The lemma gives
\[
 T=(q^2/2)q(q/2-1)=q^3(q-2)/4.
\]
Finally,
\[
 N(q+1)+q^3(q-2)/4-N^2/4=(5q^2+6q+3)/4>0.
\]
\end{proof}

%% file: polynomial_certificate.tex
\section{Polynomial certificate for the separation bound}\label{sec:certificate}
We verify the polynomial inequalities used in Lemma~\ref{lem:lightmatrix}.
The argument here depends only on the displayed coefficients, not on
the conclusion of that lemma. Let $G,P$ be the polynomials defined in
its proof, with $\lambda=-81/100$.

Write $\mathcal B_{i,d}(x)=\binom di x^i(1-x)^{d-i}$. Define
\[
 a(x)=\frac1{100}\sum_{i=0}^7 A_i\mathcal B_{i,7}(x),\qquad
 b(x)=\frac1{100}\sum_{i=0}^7 B_i\mathcal B_{i,7}(x)
\]
from the table, and let $h$ have degree-five Bernstein coefficients
$(-3669,-430,0,0,430,3669)/100$.
These coefficients are nondecreasing and antisymmetric, so $h$ is
nondecreasing and $h(1-x)=-h(x)$. 
\begin{center}
\small
\begin{tabular}{c|rrrrrrrr}
 $i$&0&1&2&3&4&5&6&7\\ \hline
 $A_i$&10000&-6978&9672&-7973&4652&-1386&-701&-843\\
 $B_i$&-69&-7346&7198&-9370&6685&-4103&-2529&-2204
\end{tabular}
\end{center}

For completeness, the finite verification uses only rational Bernstein
coefficients. Elevating a coefficient vector $(c_j)_{j=0}^d$ to degree
$D\ge d$ gives
\[
 \widehat c_i=
 \sum_j c_j\frac{\binom dj\binom{D-d}{i-j}}{\binom Di},
\]
with out-of-range binomial coefficients zero. At degree eight, $xa(x)$
has coefficient zero at index zero and $iA_{i-1}/800$ at index $1\le i\le8$.
The coefficients of $x,x^2$ are $i/8,i(i-1)/56$.
These formulas and tensor products give the degree-$(8,8)$ coefficients
of $G,P$. Repeated midpoint averaging subdivides a Bernstein polynomial
into its two half-interval representations; apply this alternately on
the two axes until every coefficient is nonnegative.
The exact checker \texttt{anc/verify\_joint\_bound.py} terminates with
$35$ rectangles for $G$ and $28$ for $P$, checking $5103$ final
coefficients. Each subdivision preserves full coverage of the closed
unit square. The Bernstein basis is nonnegative and sums to one, so
the coefficient checks prove both inequalities on the closed unit square $[0,1]^2$.
The checker contains the displayed integer data and uses only exact
fractions; it has no optimizer or floating-point dependency.

%% file: cubic_growth.bbl
\begin{thebibliography}{99}
\small
\setlength{\itemsep}{2pt}
\raggedright
\bibitem{framework} L. Qi, C. Cui, and Y. Xu,
\emph{Biquadratic SOS rank and augmented Zarankiewicz number},
arXiv:2603.04912v4 (2026).
\url{https://arxiv.org/abs/2603.04912v4}.
Published in Mathematics 14 (2026), no.\ 9, Article 1552,
\url{https://doi.org/10.3390/math14091552}.
Definitions and numbering here follow the cited arXiv version.
\bibitem{general} L. Qi, C. Cui, and Y. Xu,
\emph{A general lower bound for the limited augmented Zarankiewicz number
based upon complete graphs}, arXiv:2604.04111v5 (2026).
\url{https://arxiv.org/abs/2604.04111v5}.
\bibitem{incidence} Y. Xu, G. Yu, and L. Qi,
\emph{Optimization models and computational bounds for limited augmented
Zarankiewicz numbers in the incidence-graph family of complete graphs},
arXiv:2605.29658v2 (2026).
\url{https://arxiv.org/abs/2605.29658v2}.
\bibitem{k5} H. Liu and Y. Song,
\emph{New bounds for limited Zarankiewicz numbers from $K_{5t}$ blocks},
arXiv:2609.07227v1 (2026).
\url{https://arxiv.org/abs/2609.07227v1}.
\bibitem{designexistence} D. Horsley and D. A. Pike,
\emph{On balanced incomplete block designs with specified weak chromatic number},
J. Combin. Theory Ser. A 123 (2014), 123--153.
\url{https://doi.org/10.1016/j.jcta.2013.12.004}.
Author preprint: arXiv:1209.6111v2.
\bibitem{brouwer} A. E. Brouwer,
\emph{The linear spaces on 15 points}, Ars Combin. 12 (1981), 3--35.
Preprint ZW38/79 (1979), \url{https://ir.cwi.nl/pub/6907/6907D.pdf}.
\bibitem{densedecomp} B. Barber, D. K\"uhn, A. Lo, and D. Osthus,
\emph{Edge-decompositions of graphs with high minimum degree},
Adv. Math. 288 (2016), 337--385.
\url{https://doi.org/10.1016/j.aim.2015.09.032}.
Author preprint: arXiv:1410.5750v3.
\bibitem{llf} Z. Lv, M. Lu, and C. Fang,
\emph{A note on 3-partite graphs without 4-cycles},
J. Combin. Des. 28 (2020), 753--757.
\url{https://doi.org/10.1002/jcd.21742}.
\bibitem{chm-unbalanced} G. Chen, D. Horsley, and A. Mammoliti,
\emph{Exact values for some unbalanced Zarankiewicz numbers},
J. Graph Theory 106 (2024), 81--109.
\url{https://doi.org/10.1002/jgt.23068}.
Author preprint: arXiv:2202.05507v2.
\bibitem{chm-triple} G. Chen, D. Horsley, and A. Mammoliti,
\emph{Zarankiewicz numbers near the triple system threshold},
J. Combin. Des. 32 (2024), 556--576.
\url{https://doi.org/10.1002/jcd.21948}.
Author preprint: arXiv:2310.12685v2.
\bibitem{weakincidence} J. L\"ofberg and L. Qi,
\emph{Second order Zarankiewicz number}, arXiv:2608.30555v2 (2026).
\url{https://arxiv.org/abs/2608.30555v2}.
\bibitem{llf2} Z. Lv, M. Lu, and C. Fang,
\emph{Density of balanced 3-partite graphs without 3-cycles or 4-cycles},
Electron. J. Combin. 29 (2022), no. 4, Paper No. 4.44.
\url{https://doi.org/10.37236/10958}.
\end{thebibliography}
